\setlist{rightmargin=1em, noitemsep, topsep=2pt, partopsep=1pt}
\newtheoremstyle{result}
{6pt}{4pt}{}{}{}{}{0.5em}
{\textbf{\thmname{#1}\thmnumber{ #2}.}\thmnote{ (#3).}}
\theoremstyle{result}
\newtheorem{theorem}{Theorem}[section]
\newtheorem*{theorem*}{Theorem}
\newtheorem{proposition}[theorem]{Proposition}
\newtheorem{lemma}[theorem]{Lemma}
\newtheorem{fact}[theorem]{Fact}
\newtheorem{conjecture}[theorem]{Conjecture}
\newtheoremstyle{Definition}
{4pt}{4pt}{}{}{}{}{0.5em}
{\textbf{\thmname{#1}\thmnumber{ #2}.}\thmnote{ (#3).}}
\theoremstyle{Definition}
\newtheorem{definition}[theorem]{Definition}
\newtheorem*{definition*}{Definition}
\theoremstyle{remark}
\newtheorem{remark}[theorem]{Remark}
\newtheorem*{observation}{Observation}
\newcommand{\gluesto}{\ensuremath{\,\thicksim\,}}
\newcommand{\CF}[1]{\bigl[\,#1\,\bigr]}
\newcommand{\flatCF}[1]{[\,#1\,]}
\newcommand{\vol}{\text{vol}}
\newcommand{\myfrac}[2]{\raisebox{-0.2ex}{\scalebox{1.2}{$\sfrac{#1}{\,#2}$}}\,}
\DeclarePairedDelimiter\abs{\lvert}{\rvert}
\DeclarePairedDelimiter\norm{\lVert}{\rVert}
\newcommand{\hopf}{\ensuremath{\mathbf{H}}\xspace}
\newcommand{\cusp}{\mathfrak{c}}
\title{Triangulations of the `magic manifold' and families of census knots}
\author{Em K. Thompson}
\date{ }
\begin{document}
\maketitle
\onehalfspacing
\begin{abstract}
    We describe five ideal triangulations of the 3-cusped hyperbolic `magic manifold' that are each compatible with well-established techniques for triangulating Dehn fillings. Using these techniques, we construct low-complexity triangulations for all partial fillings of the magic manifold, and in particular, recover minimal triangulations for 229 of the hyperbolic census knots. Along the way, these census knots are sorted into 42 families related by twisting that can be extended indefinitely, with each member of each infinite family inheriting an upper bound on its triangulation complexity. These triangulations are conjectured to be minimal for all 42 families. 
\end{abstract}

\section{Introduction}\label{sec:intro}
The search for 3-manifold triangulations that are \textit{veritably minimal} is challenging, but alluring. Not only does a minimal triangulation measure an inherent complexity of the underlying 3-manifold, but it is typically the `best' triangulation to feed to the many 3-manifold algorithms whose running times grow exponentially with number of tetrahedra. 

In this paper we are interested in minimal triangulations of hyperbolic knot complements, and more specifically, those knots that arise from surgery on the hyperbolic 3-component chain link, 3CL. The complement of 3CL is the so-called `magic manifold' $M_3$, for which all exceptional fillings were classified by Martelli and Petronio in~\cite{Martelli-Petronio}. 

There are 1267 hyperbolic knots with minimal triangulations consisting of up to 9 ideal tetrahedra, which we herein refer to as the \emph{census knots}~\cite{BurtonCensusComplete,CallahanDeanWeeks,ckm,ckp,Dunfield}. The magic manifold has long been affiliated with hyperbolic manifolds of low complexity, and it is well-accepted that `many' of the manifolds in the SnapPea census~\cite[of hyperbolic 3-manifolds, see][]{SnapPy} can be realised by fillings of $M_3$~\cite{Dunfield,GabaiMeyerhoffMilley09,Milley09}. Since details of these realisations are not readily accessible, we provide the $M_3$ Dehn filling instructions for 229 of the census knots in Appendix~\ref{app:census-list}. In Section~\ref{sec:families}, we organise 221 of these census knots into 42 families related by twisting, highlighting many as-yet-undocumented families of census knots, which can each be extended indefinitely.

In Section~\ref{sec:triangulations}, we construct 5 different ideal triangulations of the 3-cusped manifold $M_3$ that are each compatible with the techniques to triangulate Dehn fillings using either \textit{layered solid tori} or \textit{layered chains}. Instructions for realising these triangulated Dehn fillings are described explicitly, including the necessary detail to reproduce each triangulation in Regina~\cite{Regina}. 
We confirm that the resulting triangulations achieve minimality for the identified census knots, then conjecture that the triangulations for the 42 aforementioned families continue to be minimal in general. 

\subsection{Acknowledgements}
Much of this research took place under the support of an \textit{Australian Government Research Training Program Stipend}. The author also received the \textit{Monash University Postgraduate Publications Award} to support the preparation of this manuscript, which is based on Chapter 2 of their PhD thesis.

\section{Preliminaries}\label{sec:prelims}
\subsection{Triangulation complexity}
The \textit{complexity} of a 3-manifold $M\notin\{S^3,L(3,1),\mathbb{RP}^3\}$ is typically measured by one of two dual notions: either by the minimum number of vertices in a \emph{special spine} of $M$~\cite[see ][]{MatveevTextbook}, or by the minimum number of tetrahedra in a \emph{(generalised) triangulation}. 

Upper bounds on complexity can be stated for any family of 3-manifolds that admits a well-behaved sequence of triangulations (or special spines). 
Families of knots with upper bounds implied by their well-behaved constructions include 2-bridge links~\cite{SakumaWeeks}, a family of twisted torus knots~\cite{HMPT}, and the twist knots~\cite{aribi-GPN}. The triangulations in this paper can be used to state upper bounds for any partial filling of the magic manifold. 

Jaco, Rubinstein, Spreer and Tillmann~\cite{JRT09,JRT11,JRST20,RST-cusped} have successfully verified the minimality of various infinite families of triangulations by finding lower bounds on the complexity of a manifold in terms of its homology. Interestingly, their work reveals instances of both layered solid tori and layered chains appearing in infinite families of minimal triangulations. None of these are families of knot complements, though, and when it comes to exact complexity for infinite families of hyperbolic knot complements, very little is known. 

Most well-understood are the 2-bridge links. The canonical triangulations described by Sakuma and Weeks~\cite{SakumaWeeks} are now known to \emph{not} be minimal, after Ishikawa and Nemoto~\cite{IshikawaNemoto} presented an upper bound on the complexity of all 2-bridge links. Indeed, Ben Aribi, Guéritaud and Piguet-Nakazawa~\cite{aribi-GPN} constructed triangulations of the twist knots using roughly half the number of tetrahedra suggested even by Ishikawa and Nemoto's bound.  
Notably, Ishikawa and Nemoto were the first to determine the exact complexity for an infinite family of hyperbolic knots, which they did for the 2-bridge links associated to continued fractions of the form $\CF{2,1,1,\cdots,1,1,2}$. 

An important resource in the study of triangulation complexity is the SnapPea census of hyperbolic manifolds~\cite{CalHilWeeks,HildWeeks,HodgsonWeeks1994,SnapPy,Thistle2010}, along with the database of all minimal triangulations of each, which is housed by Regina~\cite{BurtonCensusComplete,Regina}.
The sub-census of hyperbolic \textit{knots} identifies which of the manifolds in the SnapPea cusped orientable census are homeomorphic to the complement of a knot in $S^3$~\cite{BurtonCensusComplete,CallahanDeanWeeks,ckm,ckp,Dunfield}. 
This census of hyperbolic knots was built up incrementally, with knots up to complexity 6 classified first, followed by complexity 7, 8 and 9, each many years apart. As such, inter-complexity relationships between census knots are relatively under-studied.

\subsection{The magic manifold} 
From now on, we use `the magic manifold' to refer both to the minimally twisted hyperbolic 3-chain link 3CL, shown in Figure~\ref{fig:3CL-isotopy}, as well as its complement in $S^3$, which we denote by $M_3$. Note that this link and its mirror image are inequivalent as links; the orientation we use is chosen to align with Martelli and Petronio~\cite{Martelli-Petronio}. Also notice that any two components of 3CL may be interchanged, or all three components may be cyclically permuted, for example, by rotating the 3-chain diagram $D\subset\mathbb{R}^3$ (shown on the left of Figure~\ref{fig:3CL-isotopy}) around an appropriate geodesic in $\mathbb{R}^3$ (one that is either parallel or perpendicular, respectively, to the plane of projection). For concreteness, denote the three components of 3CL by $\cusp_0,\,\cusp_1$ and $\cusp_2$, with the convention that $\cusp_0$ is depicted in red, $\cusp_1$ in blue, and $\cusp_2$ in green.

\begin{figure}[htp]
    \centering
    \includegraphics[width=0.8\linewidth]{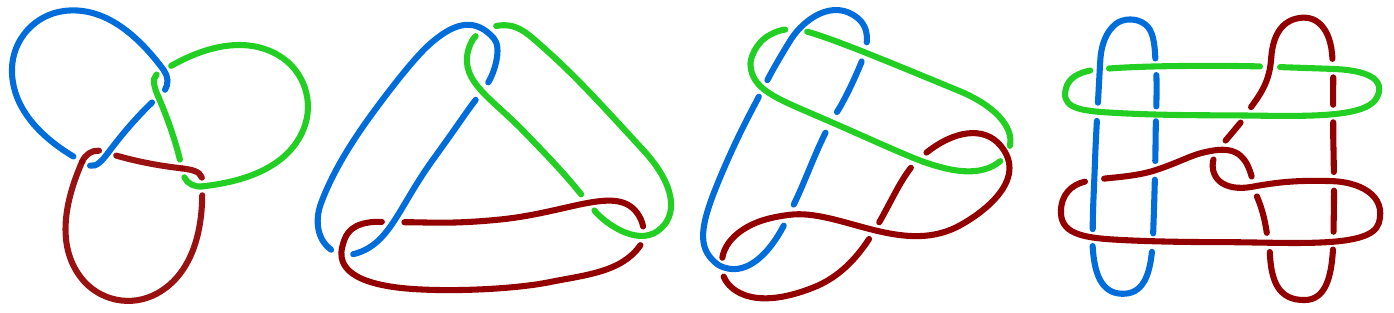}
    \caption{An isotopy of 3CL from its chain diagram to the diagram used in Figure~\ref{fig:MM-diagrams}.}
    \label{fig:3CL-isotopy}
\end{figure}

This paper relies on Martelli and Petronio's classification of all exceptional fillings of the magic manifold $M_3$ in~\cite{Martelli-Petronio}. Based on Theorem 1.1 of~\cite{Martelli-Petronio}, we distinguish the set $\mathcal{X}=\{\infty,-3,-2,-1,0\}$ of slopes that result in a non-hyperbolic filling irrespective of whether, or how, the other cusps are filled. The following theorem is a version of Theorem 1.3~\cite{Martelli-Petronio}, restated here to emphasise the fillings $M_3\big(\,\myfrac{p}{q},\,\myfrac{r}{s},\,\myfrac{t}{u}\big)=S^3$, such that $M_3\big(\,\myfrac{r}{s},\,\myfrac{t}{u}\big)$ is the complement of a hyperbolic knot in $S^3$. 

\begin{theorem}[Martelli-Petronio~\cite{Martelli-Petronio}]\label{thm:MP-classification}
    The closed manifold $M_3\big(\,\myfrac{p}{q},\,\myfrac{r}{s},\,\myfrac{t}{u}\big)$ is homeomorphic to $S^3$, and $M_3\big(\,\myfrac{r}{s},\,\myfrac{t}{u}\big)$ is a hyperbolic knot, 
    if one of the following occurs (up to permutation of $\,\myfrac{r}{s}$ and $\myfrac{t}{u}$), for $\myfrac{r}{s},\myfrac{t}{u}\notin\mathcal{X}$. 
    \begin{enumerate}[nosep, label=\Alph*., ref=\Alph*] 
        \item $\myfrac{p}{q}=\infty$, and $\myfrac{r}{s},\,\myfrac{t}{u}$ satisfy $tr-us=\pm 1$ \label{pt:firstkind}
        \item $\myfrac{p}{q}=-2$, $\,\myfrac{r}{s}=-2+\myfrac{1}{k}$ for $k\in\mathbb{Z}$, and $\myfrac{t}{u}$ satisfies $(3k-2)\,t+(6k-1)\,u=\pm 1$ \label{pt:secondkind}
        \item $\myfrac{p}{q}=-1$, $\,\myfrac{r}{s}=-3+\myfrac{1}{k}$ for $k\in\mathbb{Z}$, and $\myfrac{t}{u}$ satisfies $(2k-1)\,t+(6k-1)\,u=\pm 1$. 
    \end{enumerate}
\end{theorem}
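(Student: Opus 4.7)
The plan is to derive this statement as a direct specialisation of Theorem 1.3 in~\cite{Martelli-Petronio}, which provides a complete classification of the lens-space fillings of $M_3$. Since $S^3=L(1,0)$ is the trivial lens space, the sought-after triples $(\myfrac{p}{q},\myfrac{r}{s},\myfrac{t}{u})$ form the subset of that classification for which the resulting lens space has trivial first homology. The hyperbolicity and knot-complement conditions on $M_3(\myfrac{r}{s},\myfrac{t}{u})$ then follow from the definition of $\mathcal{X}$ together with the observation that, whenever a third filling yields $S^3$, the intermediate two-cusped manifold must be a knot complement in $S^3$.

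First I would verify hyperbolicity: the hypothesis $\myfrac{r}{s},\myfrac{t}{u}\notin\mathcal{X}$, combined with the full exceptional-filling classification (Theorem 1.1 of~\cite{Martelli-Petronio}), rules out $M_3(\myfrac{r}{s},\myfrac{t}{u})$ being non-hyperbolic. Next I would enumerate the three cases: in the Martelli--Petronio classification, the lens-space fillings of $M_3$ fall into three one-parameter families indexed by $\myfrac{p}{q}\in\{\infty,-2,-1\}$, corresponding to~(A), (B) and~(C) respectively. In (B) and (C), the structure of the filling forces one further slope to lie on a discrete list, namely $\myfrac{r}{s}=-2+\myfrac{1}{k}$ or $\myfrac{r}{s}=-3+\myfrac{1}{k}$; in (A) no such constraint appears because $\myfrac{p}{q}=\infty$ already specialises one cusp. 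The order of $H_1$ of the filled manifold is then a linear function of the remaining slope $\myfrac{t}{u}$, and setting this order equal to $\pm 1$ singles out $S^3$ among the lens spaces, producing the stated diophantine equations.

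The main obstacle I anticipate is the careful translation of cusp labellings, slope conventions and sign conventions between~\cite{Martelli-Petronio} and the present paper. The coefficients appearing in $(3k-2)\,t+(6k-1)\,u=\pm 1$ and $(2k-1)\,t+(6k-1)\,u=\pm 1$ depend delicately on the choice of meridian--longitude basis at each cusp, so a small consistency check --- for instance computing a single example at $k=1$ and matching the resulting knot against Appendix~\ref{app:census-list} --- would be a prudent sanity test before asserting the general statement.
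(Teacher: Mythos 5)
Your overall strategy---specialising Theorem~1.3 of~\cite{Martelli-Petronio} to the fillings with trivial first homology---is indeed the route the paper takes, but your execution contains a genuine gap at exactly the point where the paper's proof does its real work. You assert that ``the lens-space fillings of $M_3$ fall into three one-parameter families indexed by $\myfrac{p}{q}\in\{\infty,-2,-1\}$, corresponding to (A), (B) and (C).'' That is not what the Martelli--Petronio classification says: Table~2 of~\cite{Martelli-Petronio} contains further lens-space families, namely $M_3\big(-3,-2,\myfrac{t}{u}\big)=L(5t+7u,2t+3u)$, $M_3\big(-3,-1+\myfrac{1}{n},-1+\myfrac{1}{m}\big)$ and $M_3\big(0,n,-4-n+\myfrac{1}{m}\big)$, each of which does realise $S^3$ for suitable parameter values. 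The substance of the paper's proof is precisely the case-by-case check that every $S^3$ solution in these extra families forces at least one of the two unfilled-then-filled slopes to lie in $\mathcal{X}$ (e.g.\ any partial filling of $M_3(-3,-2,\myfrac{t}{u})$ must use $-2$ or $-3$; the solutions of $(2n+1)(2m+1)-4=\pm1$ force $-2$ or $\infty$; the third family needs $m=0$, forcing $\infty$), so that none of them contributes a hyperbolic knot. Without this check your claimed trichotomy of lens-space families is unjustified, and the identification of (A), (B), (C) as the relevant cases is asserted rather than derived; your ``sanity test at $k=1$'' does not substitute for it.

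A secondary weakness: you claim hyperbolicity of $M_3\big(\myfrac{r}{s},\myfrac{t}{u}\big)$ follows from $\myfrac{r}{s},\myfrac{t}{u}\notin\mathcal{X}$ together with Theorem~1.1 of~\cite{Martelli-Petronio}. Theorem~1.1 only governs fillings of a single cusp (it is what defines $\mathcal{X}$); the classification of exceptional fillings on \emph{two} cusps contains finitely many sporadic exceptional pairs with both slopes outside $\mathcal{X}$, so avoiding $\mathcal{X}$ alone does not rule out non-hyperbolicity of the two-cusped filling. The paper leans on the full Martelli--Petronio classification (in particular the data recorded with Table~2) for this point, whereas your argument as written invokes a statement that is too weak to deliver it.
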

\begin{remark}
    Herein, we refer to any knots $M_3\big(\,\myfrac{r}{s},\,\myfrac{t}{u}\big)$ as \emph{Type A, B or C}, depending on which point in Theorem~\ref{thm:MP-classification} applies.
\end{remark}
\begin{proof}
In~\cite{Martelli-Petronio}, the notation $L(p,q)$, where $\gcd(p,q)=1$, is taken to represent the lens space $L(\abs{p},q')$, with $q\equiv q'$ (mod $p$), $0<q'<\abs{p}$. In particular, if $a$ and $b$ are expressions in terms of filling slopes, an $S^3$ filling of $M_3$ can be identified in~\cite{Martelli-Petronio} by checking for solutions to $a=\pm 1$. 

Theorem~1.3 of~\cite{Martelli-Petronio} classifies all exceptional fillings of $M_3$ that result in \textit{closed} non-hyperbolic 3-manifolds. The theorem consists of four dot points. The first of these corresponds to the knots of Type A. Meanwhile, the knots of Type B and C form a subset of the $S^3$ fillings described by the second dot point, whose detailed descriptions are given in \cite[Table 2,][]{Martelli-Petronio}. 

The remaining $S^3$ fillings in \cite[Table 2,][]{Martelli-Petronio} appear as:
\begin{enumerate}[label=(\roman*)]
    \item $M_3\big(-3,-2,\,\myfrac{t}{u}\big)=L(5t+7u,2t+3u)$, \label{case:notS3a}
    \item $M_3\big(-3,-1+\myfrac{1}{n},-1+\myfrac{1}{m}\big)=L\big((2n+1)(2m+1)-4,(2n+1)m-2\big)$, and \label{case:notS3b}
    \item $M_3\big(0,n,-4-n+\myfrac{1}{m}\big)=L(6m-1,2m-1)$. \label{case:notS3c}
\end{enumerate}
Regardless of the choice for which cusp is left unfilled in~\ref{case:notS3a}, at least one of the $-2$ or $-3$ filling slopes must be used, forcing the resulting knot to be non-hyperbolic.  
Meanwhile, the knots $M_3\big(-3,-1+\sfrac{1}{n},-1+\sfrac{1}{m}\big)$ from~\ref{case:notS3b} are not hyperbolic because the possible solutions to $(2n+1)(2m+1)-4=\pm1$ (up to permutation) are $(n,m)\in\{(-1,-3),(-1,-2),(0,1),(0,2)\}$, and each of these forces one of the exceptional slopes $-2$ or $\infty$. 
For~\ref{case:notS3c}, we only see an $S^3$ filling when $m=0$, but this forces a slope of $\infty\in\mathcal{X}$, so $M_3\big(n,-4-n+\sfrac{1}{m}\big)$ is never a hyperbolic knot.

Finally, note that the third and fourth dot points in Theorem~1.3~\cite{Martelli-Petronio} refer only to exceptional fillings other than $S^3$, so their partial fillings do not describe knots. 
\end{proof}

\subsection{Organising families of knots} 
Suppose $M_3\bigl(\,\myfrac{r}{s},\myfrac{t}{u}\bigr)$ is a knot from Theorem~\ref{thm:MP-classification}, where $\myfrac{r}{s}$ is the filling slope for cusp $\cusp_1$, $\myfrac{t}{u}$ is the filling slope for cusp $\cusp_2$, and $\cusp_0$ is left unfilled. We refer to $\myfrac{r}{s}$ as the \emph{primary filling slope} and $\myfrac{t}{u}$ as the \emph{secondary filling slope} for the corresponding knot, then organise these knots into families that share a primary filling slope. 

The Type A knots are those for which the primary slope $\myfrac{r}{s}$ and secondary slope $\myfrac{t}{u}$ satisfy $tr-us=\pm 1$. 
Observe in the following standard fact that the pairs of slopes satisfying this relation can be conveniently expressed in terms of the \textit{positive continued fraction}\footnote{See~\cite{HatcherNumbers} for relevant exposition on continued fractions.} for $\myfrac{r}{s}$, where 
\vspace{-2mm}
\[ \CF{a_0,a_1, \cdots, a_{m-1}, a_m} = a_0 + \cfrac{1}{a_1+\cfrac{1}{\ \raisebox{1ex}{$\ddots$}\ a_{m-1}+\cfrac{\ 1\ }{a_m}\,.}} \]
\begin{fact}\label{fact:TypeA-sec-slopes}
    Suppose $\myfrac{r}{s}=\CF{a_0,a_1,\cdots,a_{m-1},a_m}$, with $a_1,\cdots,a_{m-1}>0$, $a_m>1$, and $a_0\in\mathbb{Z}$. Denote the penultimate convergent by $\myfrac{r'}{s'}=\CF{a_0,a_1,\cdots,a_{m-1}}$. Then, for any $\myfrac{t_n}{u_n}=\dfrac{s\cdot n+s'}{r\cdot n+r'}$, we have $t_n\cdot r-u_n\cdot s=\pm 1$.
\end{fact}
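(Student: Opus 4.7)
The plan is to reduce the claim to the classical identity about consecutive convergents of a positive continued fraction, via a one-line algebraic substitution.

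First I would substitute the definitions $t_n = s\cdot n + s'$ and $u_n = r\cdot n + r'$ directly into the expression $t_n\cdot r - u_n\cdot s$ and expand. The terms containing $n$ cancel between $snr$ and $rns$, leaving the $n$-independent quantity $rs' - r's$. Thus the problem reduces to showing that $rs' - r's = \pm 1$, that is, to showing the determinantal relation between the last two convergents of the continued fraction $\CF{a_0,a_1,\ldots,a_m}$.

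Next I would invoke the standard convergent identity. Writing $\myfrac{p_k}{q_k} = \CF{a_0,\ldots,a_k}$ for the $k$-th convergent, the recurrences $p_k = a_k p_{k-1} + p_{k-2}$ and $q_k = a_k q_{k-1} + q_{k-2}$ (with the usual initialisations $p_{-1}=1$, $q_{-1}=0$, $p_0 = a_0$, $q_0 = 1$) yield $p_k q_{k-1} - p_{k-1} q_k = (-1)^{k-1}$ by a short induction on $k$. Applied with $k=m$, $\myfrac{p_m}{q_m} = \myfrac{r}{s}$, and $\myfrac{p_{m-1}}{q_{m-1}} = \myfrac{r'}{s'}$, this gives exactly $rs' - r's = (-1)^{m-1} = \pm 1$, as required.

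The hypotheses $a_1,\ldots,a_{m-1} > 0$, $a_m > 1$, and $a_0\in\mathbb{Z}$ precisely ensure that $\myfrac{r}{s}$ has a \emph{unique} positive continued fraction expansion of the stated form, so the penultimate convergent $\myfrac{r'}{s'}$ is unambiguously defined. There is no genuine obstacle here; the only bookkeeping concern is to confirm that the indexing convention in the paper (i.e.\ that $\myfrac{r}{s}$ is the $m$-th convergent and $\myfrac{r'}{s'}$ the $(m-1)$-th) matches the one underpinning the classical identity, so that the sign $(-1)^{m-1}$ appears with the correct parity. Since the statement only claims $\pm 1$, sign conventions have no bearing on the final conclusion.
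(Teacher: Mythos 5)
Your proposal is correct: the cancellation of the $n$-terms reduces the claim to $rs'-r's=\pm1$, which is the classical determinant identity for consecutive convergents, and this works for any integer $a_0$ since the inductive step $p_kq_{k-1}-p_{k-1}q_k=-(p_{k-1}q_{k-2}-p_{k-2}q_{k-1})$ is purely algebraic. The paper states this as a standard fact without proof (deferring to Hatcher's book for the continued-fraction background), and your argument is exactly the standard one being implicitly invoked, so there is nothing substantive to compare.
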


Recall that the Type B knots are those of the form $M_3\big(-2+\myfrac{1}{k},\,\myfrac{t}{u}\big)$, where $t,u$ satisfy the diophantine equation $(3k-2)\,t+(6k-1)\,u=\pm 1$. Similarly, the Type C knots are those of the form $M_3\big(-3+\myfrac{1}{k},\,\myfrac{t}{u}\big)$, with $t,u$ satisfying $(2k-1)\,t+(6k-1)\,u=\pm 1$.

\begin{lemma}\label{lem:Type-BC-slopes}
    The secondary slopes for Type B and C knots are, for any $n\in\mathbb{Z}$,
    \[ \frac{t_n}{u_n}=\frac{(2k-1)-n\,(6k-1)}{(1-k)+n\,(3k-2)} \quad \text{and} \quad \frac{t_n}{u_n}=\frac{(3k-2)-n\,(6k-1)}{(1-k)+n\,(2k-1)}, \text{ respectively.} \]
\end{lemma}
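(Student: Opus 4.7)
The plan is to treat this as a routine linear diophantine calculation, proceeding in parallel for Types B and C. The first step is to observe that both equations are solvable, by checking that the coefficients are coprime. For Type B, $2(3k-2) - (6k-1) = -3$, so $\gcd(3k-2, 6k-1)$ divides $3$; but if $3 \mid 3k-2$ then $3 \mid 2$, a contradiction, hence $\gcd(3k-2, 6k-1) = 1$. Similarly, for Type C, $3(2k-1) - (6k-1) = -2$ gives $\gcd(2k-1, 6k-1) \mid 2$, and since $2k-1$ is odd, the gcd is $1$.

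Next, I would exhibit a particular solution to each $+1$ equation by direct substitution. For Type B, checking $(t, u) = (2k-1,\, 1-k)$:
\[
(3k-2)(2k-1) + (6k-1)(1-k) = (6k^2 - 7k + 2) + (-6k^2 + 7k - 1) = 1,
\]
and for Type C, checking $(t, u) = (3k-2,\, 1-k)$ produces the same cancellation.

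By the standard parameterisation of solutions to $at + bu = 1$ with $\gcd(a,b)=1$, the full set of integer solutions is obtained from the particular solution by adding integer multiples of the kernel vector $(-b, a)$. For Type B this yields
\[
(t_n,\, u_n) = \bigl((2k-1) - n(6k-1),\ (1-k) + n(3k-2)\bigr), \qquad n \in \mathbb{Z},
\]
and for Type C it yields $(t_n, u_n) = \bigl((3k-2) - n(6k-1),\ (1-k) + n(2k-1)\bigr)$. The solutions to the corresponding $-1$ equations are simply the negatives of these pairs, and since a slope $t/u$ is invariant under simultaneous sign change of numerator and denominator, the $\pm 1$ cases collapse into a single family of slopes indexed by $n \in \mathbb{Z}$, matching the statement.

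The only conceptual point to flag is this last observation — that the sign ambiguity in the diophantine equation disappears at the level of slopes — otherwise the argument is pure book-keeping, so I do not anticipate any real obstacle. If desired, I could further remark that Fact~\ref{fact:TypeA-sec-slopes} admits an analogous derivation, so all three types of secondary-slope parameterisations fit into a uniform framework of solving $at + bu = \pm 1$ by one particular solution plus the lattice of kernel vectors.
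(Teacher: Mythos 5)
Your proposal is correct and follows essentially the same route as the paper: the paper's proof rests on the single identity $(2k-1)(3k-2)+(6k-1)(1-k)=1$, which is precisely your particular solution for both types, together with the standard parameterisation of solutions and the same remark that the $-1$ case yields no new slopes. Your explicit gcd check is redundant (coprimality already follows from the Bézout identity you verify), but harmless.
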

\begin{proof}
    Solutions to the diophantine equations for both Type B and Type C knots can be determined from 
    \[(2k-1)(3k-2)+(6k-1)(1-k)= 1.\]
    Note that we gain no extra information by solving the equations for $-1$, since signs cancel in the ratio of $t$ and $u$ that we are interested in. 
\end{proof}

\subsection{Knots in the Hopf complement}\label{subsec:Hopf}
The magic manifold is an example of a link complement $L=\cusp_0\cup\ldots\cup\cusp_k$ containing a \textit{Hopf sublink} $H=\cusp_i\cup\cusp_j$.  
Let $\hopf=T^2\times(-\infty,\infty)$ and let $T_h\subset\hopf$ denote the torus at height $h\in(-\infty,\infty)$. 
Note that $\hopf$ is homeomorphic to the (non-compact) complement of the Hopf link formed by $\cusp_i\cup\cusp_j$ in $S^3$.
\begin{definition}
    Define $\hopf_{L}$ to be the (non-compact) \emph{Hopf model} of a link $L=\cusp_0\cup\ldots\cup\cusp_k$ with a Hopf sublink $H=\cusp_i\cup\cusp_j$, where $\cusp_i$ is sent to $T_{+\infty}$, $\cusp_j$ is sent to $T_{-\infty}$, and the remaining components of $L-H$ are embedded in a neighbourhood of $T_{0}$. 
\end{definition}

Figure~\ref{fig:MM-diagrams} shows the relationship between the diagram of 3CL and the Hopf model $\hopf_M$ of $M_3$ where cusp $\cusp_1$ (blue) is sent to $T_{+\infty}$, cusp $\cusp_2$ (green) is sent to $T_{-\infty}$, and $\cusp_0$ is embedded in a neighbourhood of $T_{0}$. Note that in Figure~\ref{fig:MM-diagrams}, left, away from a central tangle, $\cusp_0$ runs parallel to either $\cusp_1$ or $\cusp_2$. This allows us to convert to a diagram whose projection surface is a torus, as shown in Figure~\ref{fig:MM-diagrams}, centre. On the other hand, this same figure can be interpreted as a top-down view of the Hopf model for $M_3$ shown in Figure~\ref{fig:MM-diagrams}, right.
\begin{figure}[htp]
    \centering
    \includegraphics[width=0.7\linewidth]{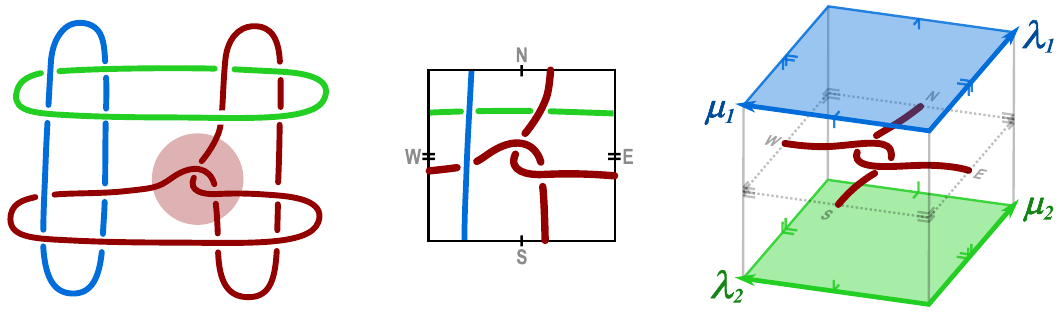}
    \caption{Left: The alternative diagram of 3CL from Figure~\ref{fig:3CL-isotopy}. Centre: A projection of the left diagram onto $T_0$, which can also be seen as a top-down view of the Hopf model on the right. Right: The (truncated) Hopf model $\hopf_M$ for $M_3$.}
    \label{fig:MM-diagrams}
\end{figure}

Note that the $\mu_i,\,\lambda_i$ framing shown for each of $\cusp_1$ and $\cusp_2$ in Figure~\ref{fig:MM-diagrams} (right) is used throughout the paper. In particular, we think of an $\myfrac{r}{s}$ Dehn filling in $\cusp_1$ as the operation that causes a geodesic $\gamma_1=r\cdot\mu_1 +s\cdot\lambda_1\subset T_{+1}$ to become homotopically trivial in the filled manifold. Similarly, a $\myfrac{t}{u}$ Dehn filling in $\cusp_2$ makes the geodesic $\gamma_2=t\cdot\mu_2 +u\cdot\lambda_2\subset T_{-1}$ homotopically trivial. 

\section{Families of census knots}\label{sec:families}
The census knots are labelled $KC_v$, with $C$ indicating the knot's triangulation complexity and $v$ ascribing an order based on the hyperbolic volumes of all equal-complexity knots. 
SnapPy~\cite{SnapPy} can be used to recognise the fillings of $M_3$ that result in census knots. By entering the diagram of $M_3$ via \emph{PLink}, we ensure the framing is as intended. It turns out that the manifold named \texttt{L6a5} has the same orientation and framing as the complement of 3CL, as shown in Figure~\ref{fig:3CL-isotopy} (left). 

Since the volume of a cusped hyperbolic manifold can only decrease after Dehn filling, no census knots with volume greater than $\mathbf{V}=\vol(M_3)=5.3335$ will appear in our list of $M_3$ fillings. Out of the 1267 census knots of complexity up to 9, there are 278 knots with volume less than $\mathbf{V}$, and we do find 229 of these by Dehn filling $M_3$. The full list is included in Appendix~\ref{app:census-list}. Any knot in this list can be entered into SnapPy as \texttt{F = Manifold(`L6a5(r,s)(t,u)')}, and its identity can be confirmed using the command \texttt{F.identify()}. This command uses the `\texttt{is\textunderscore isometric\textunderscore to}' function, so if the output is the name of a manifold \texttt{K}, then \texttt{F} and \texttt{K} are guaranteed to be isometric~\cite{SnapPy}.

\begin{remark}
There are 49 census knots with volume less than $\mathbf{V}$ that do not appear in Appendix~\ref{app:census-list}. After using SnapPy to drill out short geodesics in these knot complements without ever recovering $M_3$, we conclude that they most likely \emph{cannot} be obtained by Dehn filling the magic manifold.
\end{remark}

\subsection{Fixing a primary filling slope}
In all but one of the families that follow, we find one 9-tetrahedron knot for a secondary slope $\myfrac{t_n}{u_n}$ with index $n=-x$, and another for the secondary slope with index $n=x-1$. We define this value $x$ to be the \emph{breadth} of the family of census knots. 

We determine families of census knots that share a primary filling slope by running the following code for different values of $r_0,s_0,t_n,u_n$, according to Fact~\ref{fact:TypeA-sec-slopes} and Lemma~\ref{lem:Type-BC-slopes}. 
\begin{verbatim}
    M = Manifold(`L6a5')
    for n in range(-x,x):
        [r,s] = [r0,s0]
        [t,u] = [tn,un]
        M.dehn_fill([(0,0),(r,s),(t,u)])
        F = M.filled_triangulation()
        print(F.identify())
\end{verbatim}

Generically speaking, a knot will belong to two different families based on which slope is considered the primary slope. In practice, there are many census knots that we only see once because their second family members are too large to appear in the census. On the other hand, some knots appear more than twice, with a number of individual knots and families that can be realised by genuinely different pairs of filling slopes. These instances can be explained by the various symmetries observed by Martelli and Petronio~\cite[p.984]{Martelli-Petronio}.

We focus our attention on families of census knots that have breadth at least 2. Of the 229 knots in Appendix~\ref{app:census-list}, only 8 are \textit{not} members of such a family. In the remainder of this section we organise 221 census knots into 42 families of knots that share a primary filling slope. 

\subsubsection{Exceptional secondary slopes}
Recall that we expect to encounter some non-hyperbolic knots when a secondary slope falls in $\mathcal{X}=\{\infty,-3,-2,-1,0\}$. A secondary slope of $\infty$ always produces the unknot and a secondary slope of $0$ gives the trefoil. For secondary slopes of $-1$ and $-2$ we see various torus knots, which we label according to the fundamental group determined by SnapPy. That is, when SnapPy identifies that a non-hyperbolic filling has fundamental group of the form $\langle\,a,b\,\vert\, a^{\,p}\, b^{\,q}\,\rangle$, we assume that the knot is the $(p,q)$ torus knot and write $T_{(p,q)}$. Note that we also use $T_{(0,1)}$ to denote the unknot. 

When the secondary slope is $-3$ we obtain satellite knots. If we fix the slope $-3$ (as if it is the primary slope), then the other slope is given by the continued fraction $\CF{0,-3,m}$ for some $m\in\mathbb{Z}$. With the assistance of the software KLO~\cite{KirbyCalculator}, we find that this family is generated by $\myfrac{1}{1-m}$ fillings on the unknotted component in Figure~\ref{fig:satellite-parent}. Hence, in the tables that follow, we denote these satellite knots by $S_{-3,m}$. 
\begin{figure}
    \centering
    \includegraphics[width=0.25\linewidth]{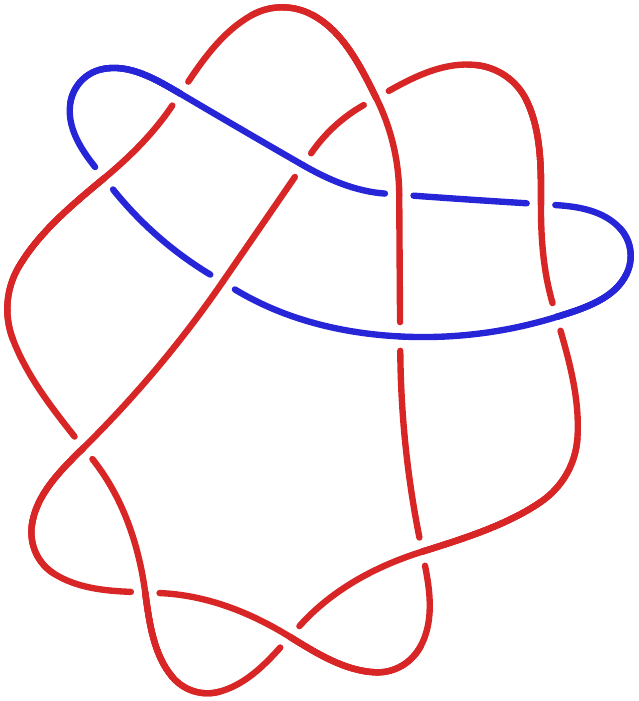}
    \caption{The 2-component parent link $M_3(-3)$ responsible for the satellite knots we encounter.}
    \label{fig:satellite-parent}
\end{figure}

\subsection{Families of note}
The families with primary slopes $\myfrac{-1}{2},\,\myfrac{1}{1},\myfrac{-4}{1}$, and $\myfrac{-5}{2}$ are well-known as fillings of the $(3/10)$ 2-bridge link, the Whitehead link, the Whitehead sister link, and the Berge manifold, respectively~\cite[see discussion on pp.1009-1011]{Martelli-Petronio}. It is noteworthy that these are precisely the 2-component links that can be built from 4 regular ideal tetrahedra~\cite{RegularTetCensus}, or from a single regular ideal octahedron~\cite{goernerPlatonicCensus}. 

These 4 families are shown in the following table. Note that the first family is the only family in this paper where we see the same census knots in the positive and negative directions, and it is the only one for which the breadth is not defined, since we find $K9_5$ at the values $n=7$ and $n=-9$ (not shown in the table).

\vspace{2mm}
\begin{adjustbox}{scale=0.9,center}
    $\begin{array}{c|c||cccccccc||c|c}
& &         0 & 1 & 2 & 3 & 4 & 5 & 6 & 7 & ~ & ~ \\ \rowcolor{black!8}
        \cellcolor{white}\multirow{-2}{*}{$(r,s)$} & \cellcolor{white}\multirow{-2}{*}{$(t_n,u_n)$} & -1 & -2 & -3 & -4 & -5 & -6 & -7 & -8 & \cellcolor{white}\multirow{-2}{*}{$M(r/s)$} & \cellcolor{white}\multirow{-2}{*}{a.k.a} \\ \hline
        ~ & ~ & T_{(2,5)} & K3_{1} & K4_{3} & K5_{5} & K6_{5} & K7_{5} & K8_{5} & K9_{5} & ~ & \cellcolor{white}\multirow{1}{*}{otet$04_{00001}$} \\ \rowcolor{black!8}
        \cellcolor{white}\multirow{-2}{*}{$(-1,2)$} & \cellcolor{white}\multirow{-2}{*}{$(-1-2 n,1+n)$} & T_{(0,1)} & T_{(2,5)} & K3_{1} & K4_{3} & K5_{5} & K6_{5} & K7_{5} & K8_{5} & \cellcolor{white}\multirow{-2}{*}{m203} & \cellcolor{white}\multirow{1}{*}{L6a2} \\ \hline
        ~ & ~ & T_{(2,3)} & K3_{2} & K4_{2} & K5_{3} & K6_{2} & K7_{2} & K8_{2} & K9_{2} & ~ & \cellcolor{white}\multirow{1}{*}{ooct$01_{00001}$} \\ \rowcolor{black!8}
        \cellcolor{white}\multirow{-2}{*}{$(1,1)$} & \cellcolor{white}\multirow{-2}{*}{$(n,1+n)$} & T_{(0,1)} & K2_{1} & K4_{1} & K5_{2} & K6_{1} & K7_{1} & K8_{1} & K9_{1} & \cellcolor{white}\multirow{-2}{*}{m129} & \cellcolor{white}\multirow{1}{*}{L5a1} \\ \hline
        ~ & ~ & K3_{1} & K5_{4} & K6_{4} & K7_{4} & K8_{4} & K9_{4} & ~ & ~ & ~ & \cellcolor{white}\multirow{1}{*}{ooct$01_{00000}$} \\ \rowcolor{black!8}
        \cellcolor{white}\multirow{-2}{*}{$(-4,1)$} & \cellcolor{white}\multirow{-2}{*}{$(-1-n,3+4 n)$} & T_{(2,3)} & K5_{1} & K6_{3} & K7_{3} & K8_{3} & K9_{3} & ~ & ~ & \cellcolor{white}\multirow{-2}{*}{m125} & \cellcolor{white}\multirow{1}{*}{L13n5885} \\ \hline      
        ~ & ~ & K4_{4} & K6_{7} & K7_{7} & K8_{7} & K9_{7} & ~ & ~ & ~ & ~ & ~ \\ \rowcolor{black!8}
        \cellcolor{white}\multirow{-2}{*}{$(-5,2)$} & \cellcolor{white}\multirow{-2}{*}{$(-1-2 n,3+5 n)$} & K3_{1} & K6_{6} & K7_{6} & K8_{6} & K9_{6} & ~ & ~ & ~ & \cellcolor{white}\multirow{-2}{*}{m202} & \cellcolor{white}\multirow{-2}{*}{otet$04_{00000}$} \\ 
    \end{array}$
\end{adjustbox}
\vspace{2mm}

The next largest families (as measured by breadth) are shown in the following table. Where the partial filling $M(\,\myfrac{r}{s})$ is a known link, we provide its name from the Hoste-Thistlethwaite census.

\vspace{2mm}
\begin{adjustbox}{scale=0.9,center}
    $\begin{array}{c|c||ccccccc||c|c}
& &         0 & 1 & 2 & 3 & 4 & 5 & 6 & ~ & ~ \\ \rowcolor{black!8}
        \cellcolor{white}\multirow{-2}{*}{$(r,s)$} & \cellcolor{white}\multirow{-2}{*}{$(t_n,u_n)$} & -1 & -2 & -3 & -4 & -5 & -6 & -7 & \cellcolor{white}\multirow{-2}{*}{$M(r/s)$} & \cellcolor{white}\multirow{-2}{*}{a.k.a} \\ \hline
        ~ & ~ & T_{(3,5)} & K4_{4} & K5_{18} & K6_{17} & K7_{30} & K8_{33} & K9_{36} & ~ & ~ \\ \rowcolor{black!8}
        \cellcolor{white}\multirow{-2}{*}{$(-1,3)$} & \cellcolor{white}\multirow{-2}{*}{$(-2-3 n,1+n)$} & T_{(0,1)} & K3_{1} & K5_{14} & K6_{14} & K7_{27} & K8_{31} & K9_{35} & \cellcolor{white}\multirow{-2}{*}{m391} & \cellcolor{white}\multirow{-2}{*}{L8a12} \\ \hline
        ~ & ~ & K2_{1} & K5_{13} & K6_{9} & K7_{11} & K8_{10} & K9_{9} & ~ & ~ & ~ \\ \rowcolor{black!8}
        \cellcolor{white}\multirow{-2}{*}{$(2,1)$} & \cellcolor{white}\multirow{-2}{*}{$(1+n,1+2 n)$} & T_{(2,3)} & K5_{9} & K6_{8} & K7_{10} & K8_{9} & K9_{8} & ~ & \cellcolor{white}\multirow{-2}{*}{m295} & \cellcolor{white}\multirow{-2}{*}{L9n14} \\ \hline
        ~ & ~ & T_{(2,7)} & K5_{7} & K6_{13} & K7_{23} & K8_{19} & K9_{20} & ~ & ~ & ~ \\ \rowcolor{black!8}
        \cellcolor{white}\multirow{-2}{*}{$(-2,3)$} & \cellcolor{white}\multirow{-2}{*}{$(-1-3 n,1+2 n)$} & T_{(3,4)} & K5_{6} & K6_{12} & K7_{22} & K8_{18} & K9_{19} & ~ & \cellcolor{white}\multirow{-2}{*}{m359} & \cellcolor{white}\multirow{-2}{*}{N/A} \\ \hline
        ~ & ~ & K3_{2} & K6_{10} & K7_{24} & K8_{22} & K9_{22} & ~ & ~ & ~ & ~ \\ \rowcolor{black!8}
        \cellcolor{white}\multirow{-2}{*}{$(1,2)$} & \cellcolor{white}\multirow{-2}{*}{$(1+2 n,1+n)$} & T_{(0,1)} & K5_{8} & K7_{18} & K8_{20} & K9_{21} & ~ & ~ & \cellcolor{white}\multirow{-2}{*}{m367} & \cellcolor{white}\multirow{-2}{*}{L7a6} \\ \hline
        ~ & ~ & S_{-3,-1} & K6_{11} & K7_{37} & K8_{49} & K9_{66} & ~ & ~ & ~ & ~ \\ \rowcolor{black!8}
        \cellcolor{white}\multirow{-2}{*}{$(-1,4)$} & \cellcolor{white}\multirow{-2}{*}{$(-3-4 n,1+n)$} & T_{(0,1)} & K5_{10} & K7_{34} & K8_{46} & K9_{61} & ~ & ~ & \cellcolor{white}\multirow{-2}{*}{s602} & \cellcolor{white}\multirow{-2}{*}{L10a114} \\ \hline
        ~ & ~ & S_{-3,2} & K6_{21} & K7_{43} & K8_{60} & K9_{81} & ~ & ~ & ~ & ~ \\ \rowcolor{black!8}
        \cellcolor{white}\multirow{-2}{*}{$(-2,5)$} & \cellcolor{white}\multirow{-2}{*}{$(-3-5 n,1+2 n)$} & T_{(3,8)} & K6_{16} & K7_{42} & K8_{59} & K9_{80} & ~ & ~ & \cellcolor{white}\multirow{-2}{*}{s661} & \cellcolor{white}\multirow{-2}{*}{N/A} \\ 
    \end{array}$
\end{adjustbox}
\vspace{2mm}

The remaining families of census knots with breadth 4, 3, 2 are summarised in Appendix~\ref{app:up-to-8}. Amongst these are three Type B and three Type C families, which are distinguished by a subscript on their primary slopes. Note that the indexing used for Type B and C knots in Appendix~\ref{app:up-to-8} is shifted in comparison to the indexing given in Lemma~\ref{lem:Type-BC-slopes}.

\subsection{Families of knots related by twisting}
As evident from Theorem~\ref{thm:MP-classification}, each family of knots can be extended indefinitely. We make the further claim that there exists a diagram for each family, such that every increase in the index $n$ corresponds to an extra full twist on some number of parallel strands. Since it is not the focus of this paper, the proposed diagrams for each family type are included in Appendix~\ref{app:knot-diagrams}.

\section{Triangulations}\label{sec:triangulations}
In this section, we describe ideal triangulations of the magic manifold by explicitly decomposing the Hopf model $\hopf_M$ into ideal tetrahedra. This is done carefully to ensure that cusps $\cusp_1$ and $\cusp_2$ can be replaced by 
either \textit{layered solid tori} or \textit{layered chains}, thus constructing triangulations for any partial filling of $M_3$. Ultimately, we construct five different triangulations, which are all necessary when accounting for minimal triangulations of census knots in Section~\ref{sec:counting-tets}.

\subsection{Triangulated Dehn fillings}\label{sec:triangulated-dehn-fillings}
We begin with a description of the techniques used to triangulate Dehn fillings. In particular, we establish the requirements for a parent triangulation to be compatible with each technique. 

\subsubsection{The Farey diagram} First, let us establish the framework we use to organise triangulations of Dehn fillings. 
\begin{definition}
    The \textit{Farey triangulation} is the triangulation of $\mathbb{H}^2$ formed by connecting two rational numbers $\myfrac{a}{b},\,\myfrac{c}{d}\in\partial\mathbb{H}^2=\mathbb{R}\cup\{\infty\}$ by a hyperbolic geodesic whenever $ad-bc=\pm1$. 
\end{definition}
\begin{definition}
    For us, the \textit{Farey diagram} is the Poincar\'e disc model of $\mathbb{H}^2$ endowed with both the Farey triangulation and its dual graph. Reference to \emph{points} and \emph{paths} in the Farey diagram mean vertices and edge-paths in the dual graph. The three rational numbers associated to the vertices of a Farey triangle form a triple that is assigned to, and used to label, the corresponding point in the diagram.
\end{definition}

We state the following facts without proof. Relevant references include~\cite{GueritaudSchleimer,HatcherNumbers,myThesis}.

\begin{fact}
The Farey diagram can be seen as the `flip graph' for one-vertex triangulations of the torus.
    \begin{enumerate}[label=\roman*.]
        \item Each point in the Farey diagram encodes a one-vertex triangulation of the torus.
        \item A step in the Farey diagram from one point to an adjacent point corresponds to `flipping an edge' in the torus triangulation.
    \end{enumerate}
\end{fact}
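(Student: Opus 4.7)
The plan is to exhibit an explicit bijection between one-vertex triangulations of the torus $T^2$ and the triangles of the Farey tessellation (equivalently, points in the dual graph), then verify that an edge flip on the torus side corresponds exactly to crossing from one Farey triangle to an adjacent one.

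First I would pin down the combinatorial shape of any one-vertex triangulation $\tau$ of $T^2$. The Euler characteristic equation $V-E+F = 0$, together with $3F = 2E$, forces $(V,E,F) = (1,3,2)$. Since every edge of $\tau$ is a loop at the single vertex, it represents a primitive element of $H_1(T^2) \cong \mathbb{Z}^2$, and hence has a well-defined slope in $\mathbb{Q} \cup \{\infty\}$ once a meridian-longitude framing is fixed. To prove point (i), I would send $\tau$ to the unordered triple of slopes $\{p/q, r/s, u/v\}$ of its three edges. Lifting $\tau$ to the universal cover $\mathbb{R}^2$, the two triangles fit together into a fundamental parallelogram whose two sides and one diagonal realise the three slopes; a short determinant computation shows that any two of these integer vectors form a $\mathbb{Z}$-basis of $\mathbb{Z}^2$, which is exactly the Farey condition $|ps-qr|=|sv-ur|=|up-vq|=1$.

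For the converse, I would use the $\mathrm{SL}(2,\mathbb{Z})$-action on $T^2$. The standard Farey triple $\{1/0, 0/1, 1/1\}$ is realised by the familiar two-triangle triangulation of the square torus, and transitivity of $\mathrm{SL}(2,\mathbb{Z})$ on Farey triples (immediate from the Euclidean algorithm) transports this realisation to every Farey triangle; uniqueness of the resulting triangulation given its three slopes comes from the fact that a primitive simple closed curve on $T^2$ is determined up to isotopy by its slope. For point (ii), I would again pass to the universal cover. An edge flip in $\tau$ replaces one diagonal of the parallelogram by the other; if the side vectors are $\vec{v}_1 = (q,p)$ and $\vec{v}_2 = (s,r)$, the two diagonals are $\vec{v}_1 + \vec{v}_2$ and $\vec{v}_1 - \vec{v}_2$, with slopes $(p+r)/(q+s)$ and $(p-r)/(q-s)$. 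A direct check shows these are precisely the two possible third vertices of a Farey triangle containing the edge joining $p/q$ and $r/s$, so the flip corresponds exactly to stepping across that edge in the dual graph.

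The main obstacle I expect is the careful bookkeeping in the converse of (i): one must check that a Farey triple of slopes really does arise from three loops meeting transversally at a single point and cutting $T^2$ into two triangles, rather than some degenerate configuration with accidental extra intersections or bigons. Reducing to the standard triple via $\mathrm{SL}(2,\mathbb{Z})$-equivariance should be the cleanest route, since the square-torus case can be drawn by hand and every other Farey triangle is then transported by a lattice automorphism.
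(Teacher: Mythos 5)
The paper does not prove this Fact at all: it is stated explicitly ``without proof,'' with the reader referred to Gu\'eritaud--Schleimer, Hatcher's book, and the author's thesis, so there is no in-paper argument to compare yours against. Your sketch is essentially the standard proof found in those references, and it is correct: the Euler characteristic count $(V,E,F)=(1,3,2)$, the assignment of a slope triple to the three edge loops, the determinant/unimodularity computation identifying such triples with Farey triangles, the $\mathrm{SL}(2,\mathbb{Z})$-transitivity argument for the converse, and the observation that a flip exchanges the two diagonals $\vec{v}_1\pm\vec{v}_2$ of the fundamental parallelogram, which are exactly the two mediants completing the Farey edge from $p/q$ to $r/s$. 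The one step that genuinely needs care --- and which you correctly flag --- is the claim that an arbitrary one-vertex triangulation is ``standard,'' i.e.\ that its edges are essential loops with three distinct primitive slopes and that the two triangles lift to a fundamental parallelogram in $\mathbb{R}^2$; without this, slopes are not even well-defined and an edge might fail to be flippable (e.g.\ if it appeared twice on one triangle). A short argument ruling out null-homotopic edges and degenerate self-gluings on the torus (or your proposed reduction to the square torus via $\mathrm{SL}(2,\mathbb{Z})$-equivariance, run in both directions) closes this, after which your bijection and the flip correspondence go through exactly as in the cited sources.
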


\begin{fact} The Farey diagram connects all rational numbers. 
    \begin{enumerate}[label=\roman*.]
        \item Every rational number appears in the Farey diagram.
        \item The dual graph of the Farey triangulation is a trivalent tree.
        \item There is a unique shortest path connecting any two points in the Farey diagram.
    \end{enumerate}
\end{fact}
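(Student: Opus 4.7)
My plan is to prove the three parts in order, with (iii) arising as a formal corollary of (ii). The key computational tool throughout is the \emph{mediant property}: if $\myfrac{a}{b}$ and $\myfrac{c}{d}$ are Farey neighbours, so that $ad-bc=\pm 1$, then $a(b+d)-b(a+c)=ad-bc=\pm 1$ and similarly $c(b+d)-d(a+c)=\mp 1$, so the mediant $\myfrac{a+c}{b+d}$ is a Farey neighbour of each and the triple spans a Farey triangle. I would establish this as an opening lemma, since it is the engine driving both (i) and (ii).

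For part (i), the plan is to show every $\myfrac{p}{q}\in\mathbb{Q}$ is reached by iterated mediants from a base configuration. Every integer $\myfrac{n}{1}$ is a Farey neighbour of $\myfrac{1}{0}$ (since $n\cdot 1-1\cdot 0=1$) and of $\myfrac{n+1}{1}$ (since $n\cdot 1-1\cdot(n+1)=-1$), giving a strip of Farey triangles along $\mathbb{R}$. From this strip, I would induct on $m$ to show that the positive continued fraction expansion $\myfrac{p}{q}=\CF{a_0,a_1,\ldots,a_m}$ produces a sequence of successive mediant constructions whose convergents are Farey-adjacent and terminate at $\myfrac{p}{q}$; the inductive step is a direct application of the mediant lemma to the pair of last two convergents.

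For part (ii), trivalence is immediate, since each Farey triangle has three sides, each shared with a unique neighbour. Connectivity of the dual graph follows from (i), because any two triangles can be joined to the base triangle through paths built out of their labelling rationals. Acyclicity is the delicate part: a cycle in the dual graph would enclose a bounded region $R\subset\mathbb{H}^2$, and since each Farey edge is a complete hyperbolic geodesic and the tessellation is locally finite, $R$ decomposes into finitely many Farey triangles. I would argue by induction on this count that no minimal cycle can exist, showing that removing a boundary (``ear'') triangle from a minimal cycle shortens it, until we reach a contradiction in the base case. A cleaner alternative is to identify the dual graph with the Bass-Serre tree of $\mathrm{PSL}(2,\mathbb{Z})\cong\mathbb{Z}/2\ast\mathbb{Z}/3$, whose natural action on $\mathbb{H}^2$ realises the Farey triangulation as the orbit of a single ideal triangle. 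Part (iii) is then automatic: in any tree, any two vertices are joined by a unique reduced path, which is necessarily shortest. The main obstacle is the acyclicity argument in (ii), where the inductive reduction requires a careful geometric justification that a minimal cycle always admits an ear whose removal genuinely produces a shorter cycle in a trivalent dual graph.
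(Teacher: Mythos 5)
The paper does not actually prove this fact: it is explicitly ``stated without proof,'' with references to Gu\'eritaud--Schleimer and Hatcher, so there is no in-paper argument to compare against; your sketch is essentially the standard argument found in those references. The mediant lemma driving (i) via continued-fraction convergents, trivalence of the dual graph, and deducing (iii) from the tree property are all sound in outline. Two points deserve tightening. First, trivalence requires the (easy but necessary) computation that a Farey edge with endpoints $a/b$ and $c/d$ lies in exactly two triangles, with third vertex $(a+c)/(b+d)$ or $(a-c)/(b-d)$; your phrase ``each shared with a unique neighbour'' is asserting exactly this and should be justified by solving $|ay-bx|=|cy-dx|=1$. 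Second, the acyclicity step is where your ear-removal induction is weakest, and there is a cleaner elementary route you nearly have in hand: each Farey edge is a bi-infinite geodesic and therefore separates $\mathbb{H}^2$, so the corresponding dual edge is a bridge (any dual path joining triangles on opposite sides must cross that edge); a connected graph all of whose edges are bridges has no cycles, so together with your connectivity argument the dual graph is a trivalent tree and (iii) follows at once. Your Bass--Serre alternative also works, but with two caveats: the Bass--Serre tree of $\mathbb{Z}/2\ast\mathbb{Z}/3$ is the $(2,3)$-biregular tree, which only becomes the trivalent dual tree after suppressing its valence-two vertices, and you need an independent proof of $\mathrm{PSL}(2,\mathbb{Z})\cong\mathbb{Z}/2\ast\mathbb{Z}/3$ (e.g.\ by ping-pong), since the common proof of that isomorphism proceeds via the action on this very tessellation, which would make the argument circular.
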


\subsubsection{Layered solid tori} 
Layered solid tori, as introduced by Jaco and Rubinstein~\cite{JacoRubinstein:LST}, can be used to triangulate a Dehn filling of any given slope, provided the torus boundary to be filled has a one-vertex triangulation. 

The `layers' of a layered solid torus each consist of a single tetrahedron, with the unique 1-tetrahedron solid torus forming the innermost layer. All other layered solid tori can be constructed by \emph{layering} a sequence of tetrahedra onto the boundary of the 1-tetrahedron solid torus, with each layer adjusting the boundary triangulation by a flip. This process can also be performed `in reverse' by starting from the boundary and layering tetrahedra inwards, before forming the 1-tetrahedron solid torus at the centre by \emph{folding} the innermost boundary closed.

Note that this closing fold glues the two triangles in the one-vertex triangulation to each other, resulting in a one-triangle M\"obius band, which is the spine of the solid torus. This M\"obius band is also considered a \emph{degenerate} 0-tetrahedron layered solid torus. 

Because each layer in a layered solid torus corresponds to a flip in a one-vertex triangulation of the torus, the construction of these triangulations can be described by a path in the Farey diagram. 
Indeed, given \textit{any} one-vertex torus boundary, the Farey diagram encodes instructions for the triangulation of \textit{any} Dehn filling. This proceeds as follows. 

A \emph{starting point} in the Farey diagram is determined by the slopes of the three edges that appear in the boundary triangulation, with respect to the framing of the boundary torus. An \emph{end point} in the Farey diagram is the nearest point (to the starting point) that includes the desired Dehn filling slope. If the unique shortest path between the starting point and the end point has length $N$, then the first $N-1$ steps are used as instructions for layering on $N-1$ tetrahedra that adjust the one-vertex triangulation accordingly. The $N$th step is an instruction for how to perform the closing fold, thus making the filling slope homotopically trivial.

\subsubsection{Layered chains} 
A layered chain is a subcomplex of a triangulation described by Burton in his PhD thesis~\cite{BenThesis}. In the right setting, layered chains provide another way to triangulate Dehn fillings. Like layered solid tori, layered chains can be constructed from the outside inwards by layering some number of tetrahedra onto a torus boundary, then closing the boundary by gluing pairs of exposed faces together by folding. 

To use a layered chain for a Dehn filling, we require a two-vertex triangulation of the boundary torus. 
We call a boundary triangulation \emph{permissible} if it is isomorphic to one of the forms shown in Figure~\ref{fig:RST-bdry}. 
In order to describe fillings of a permissible boundary using a layered chain, we let $\alpha$ be the curve parallel to edge $a$ in Figure~\ref{fig:RST-bdry}, and let $\beta$ be the curve parallel to the concatenation of edges $c$ and $e$. 

\begin{figure}[htp]
    \centering
    \includegraphics[width=0.6\linewidth]{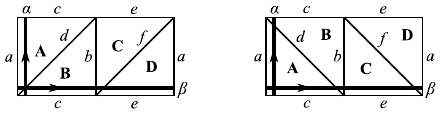}
    \caption{A \emph{permissible boundary} that a layered chain may be attached to. We say that the boundary on the left has \emph{positive diagonals}, while the boundary on the right has \emph{negative diagonals}.}
    \label{fig:RST-bdry}
\end{figure}

During the layering step of constructing a layered chain, the edges labelled $a$ and $b$ in Figure~\ref{fig:RST-bdry} must remain exposed. Indeed, we may assume that edges $c$ and $d$ also remain exposed, with all layering taking place in a `column' on top of faces \textbf{C} and \textbf{D}. 
The choice of edge to layer over is therefore restricted to $e$ or $f$, in the first instance. Layering over edge $f$ is straightforward, but to layer over edge $e$, we must first `shift' our fundamental domain so that $e$ appears as an `interior' edge (that is, cut along $f$ and reglue face \textbf{D} along $e$). Similar adjustments can be made to enable each successive step in the layering process.

At the end of the layering step we are left with a new boundary that is essentially the same, but with its two right-most faces skewed vertically. In Figure~\ref{fig:perm-fold-domain}, we use \textbf{A'} to denote the new face adjacent to \textbf{A}, and \textbf{B'} to denote the new face adjacent to \textbf{B}.

We describe the folding step only for the permissible boundary with positive diagonals, since the other case is symmetric. Take a fundamental region for the exposed torus that is isomorphic to the one shown in Figure~\ref{fig:perm-fold-domain}, left. Begin closing this boundary by gluing faces $\mathbf{B}$ and $\mathbf{B}'$ together with a fold across $b$. This identifies the edges $c\gluesto c'$ and $d\gluesto d'$ and the remaining boundary appears as in Figure~\ref{fig:perm-fold-domain}, right. Finish closing the boundary by gluing faces $\mathbf{A}$ and $\mathbf{A}'$ together with a fold across the edge $d\gluesto d'$. 
\begin{figure}[ht]
    \centering
    \includegraphics[width=0.55\linewidth]{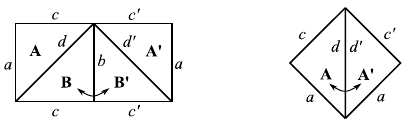}
    \caption{Instructions for closing a layered chain by folding. This shows the case where the permissible boundary has positive diagonals, and the case with negative diagonals is the same up to reflection.}
    \label{fig:perm-fold-domain}
\end{figure}

To determine the slope of a Dehn filling realised by a layered chain, we must identify the curve made homotopically trivial by the folding step, and find the slope of its isotopy class under the local cusp framing. Figures~\ref{fig:perm-slopes-a} and~\ref{fig:perm-slopes-b} demonstrate that a layered chain in a permissible boundary with slope information $\{\alpha,\beta;+\}$ uses one tetrahedron to perform a $\beta$ Dehn filling, two tetrahedra to perform a $-\alpha+\beta$ Dehn filling, no tetrahedra to perform an $\alpha+\beta$ Dehn filling, and one tetrahedron to perform a $2\cdot\alpha+\beta$ Dehn filling. 

\begin{figure}
    \centering
    \includegraphics[width=0.85\linewidth]{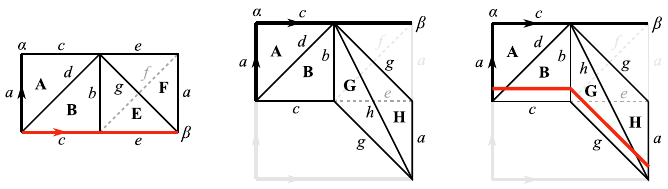}
    \caption{Left: Layer over edge $f$. Fold closed, making the red curve homotopically trivial. One tetrahedron was used and the curve made trivial was $\beta$. Centre: Instead of folding at the previous step, shift the domain and layer over edge $e$. Right: Fold closed, making the red curve homotopically trivial. Two tetrahedra were used and the curve made trivial was $-\alpha+\beta$.}
    \label{fig:perm-slopes-a}
\end{figure}

\begin{figure}
    \centering
    \includegraphics[width=0.85\linewidth]{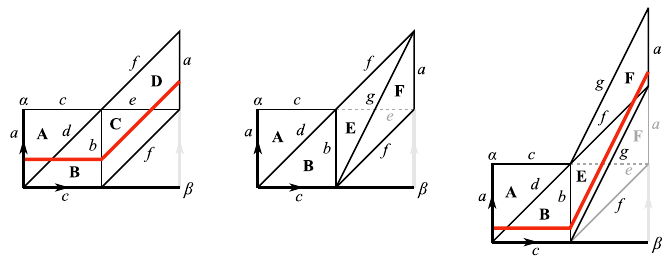}
    \caption{Left: Shift the fundamental domain, then fold closed, making the red curve homotopically trivial. No tetrahedra were used and the curve made trivial was $\alpha+\beta$. Centre: Instead of folding at the previous step, layer over edge $e$. Right: Shift the fundamental domain, then fold closed, making the red curve homotopically trivial. One tetrahedron was used and the curve made trivial was $2\cdot\alpha+\beta$.}
    \label{fig:perm-slopes-b}
\end{figure}

Note that the only curves in a permissible cusp that can be made trivial using a layered chain are those of the form $\gamma=k\cdot\alpha+\beta$, for some $k\in\mathbb{Z}$. In particular, if $\alpha$ corresponds to the meridian of the cusp, and $\beta$ corresponds to a longitude, the only Dehn fillings that can be realised by a layered chain are the integer fillings.

\subsubsection{Compatible triangulations} 
As discussed, layered solid tori and layered chains can only be used to triangulate Dehn fillings in torus boundary components equipped with one- and two-vertex triangulations, respectively. Hence, we are interested in ideal triangulations of $M_3$ that induce compatible boundary triangulations for cusps $\cusp_1$ and $\cusp_2$. 

A \textit{standard cusp} is a sub-complex homeomorphic to $T^2\times \left[0,\infty\right)$ consisting of two ideal tetrahedra, X and Y. Every standard cusp is isomorphic to the one described by the Regina gluing information below. When present in an ideal triangulation, a standard cusp can be removed to reveal a once-punctured torus boundary triangulated by two faces. Such a boundary can then be Dehn filled using a layered solid torus. Howie, Mathews and Purcell~\cite{HMP} proved that it is always possible to adjust an ideal triangulation of a $c$-cusped manifold so that $c-1$ of the cusps are standard. 

\vspace{2mm}
\begin{adjustbox}{center, scale=0.9}
\renewcommand{\arraystretch}{1.2}
\begin{tabular}{c|c|c|c|c}
    Tetrahedron & Face 012 & Face 013 & Face 023 & Face 123 \\
    \hline
    X & Y(021) & Y(031) & Y(032) & -- \\
    Y & X(021) & X(031) & X(032) & -- 
\end{tabular}
\end{adjustbox}
\vspace{2mm}

For any standard cusp $\cusp_i$, let $\mathcal{S}_i=\{s_0,s_1,s_2\}$ be the \emph{slope information} for the corresponding boundary, where each $s_i$ is the slope of one of the three edges in the one-vertex boundary, under the local framing.

We define a \emph{permissible cusp} to be the cone over a permissible boundary, as described by the Regina gluing information below. Whenever a permissible cusp is present in an ideal triangulation, we may remove the 4-tetrahedron sub-complex and perform a Dehn filling by gluing in a layered chain. 

\vspace{2mm}
\begin{adjustbox}{scale=0.9, center}
\renewcommand{\arraystretch}{1.2}
\begin{tabular}{c|c|c|c|c}
    Tetrahedron & Face 012 & Face 013 & Face 023 & Face 123 \\
    \hline 
    A & D(021) & B(031) & B(032) & -- \\
    B & C(021) & A(031) & A(032) & -- \\
    C & B(021) & D(031) & D(032) & -- \\
    D & A(021) & C(031) & C(032) & --
\end{tabular}
\end{adjustbox}
\vspace{2mm}

Using this labelling, the $\alpha$ curve is parallel to the edge A(12). In a permissible boundary with positive diagonals, the $\beta$ curve is parallel to (the concatenation of) B(31) and D(31), whereas in a permissible boundary with negative diagonals, we take $\beta$ parallel to (the concatenation of) A(23) and C(23). 
We use $\Hat{\mathcal{S}}_i=\{s_\alpha,s_\beta;\pm\}$ to encode the slope information for a permissible boundary, where $s_\alpha,s_\beta$ are the slopes of the curves $\alpha,\beta$, under the local framing, and the sign refers to the sign of the diagonals in the permissible boundary.

Suppose all tetrahedra that appear in a standard or permissible cusp in an ideal triangulation of $M_3$ are removed, leaving $t_0$ tetrahedra that we define to be its \emph{core}. 
The five triangulations of $M_3$ that we will shortly describe are determined by their respective cores, and can be distinguished by the slope information for each boundary. Hence, we shall denote them by $\mathcal{T}\big(\mathcal{S}_1,\mathcal{S}_2;\, t_0\big)$.

\subsection{Triangulating strategically}\label{sec:tri-strategy}
When decomposing a link complement into ideal tetrahedra from a diagram, we introduce ideal edges with their `endpoints' placed `on' the link. However, because the link itself is not actually present in the complement, the endpoints of an edge can be isotoped freely along the link diagram, so long as its interior never passes through a link component or another edge. Successive edges can be introduced in this way until they eventually span a collection of ideal tetrahedra that fill the entire complement. 
Note that this flexibility means copies of the same edge can appear multiple times, often in surprisingly distinct positions with respect to the diagram.

Recall the Hopf model $\hopf_M$ for $M_3$ shown in Figure~\ref{fig:MM-diagrams} (Section~\ref{subsec:Hopf}). Suppose cusp $\cusp_0$ meets the torus $T_{+1/2}$ once at the East-West identification point, and the torus $T_{-1/2}$ once at the North-South identification point, with the remainder of $\cusp_0$ contained in the interval $T^2\times (-1/2,+1/2)$. 
To describe triangulations of $\hopf_M$ that are compatible with layered solid tori and layered chains, we focus on carefully triangulating the neighbourhood of $\cusp_0$ so that any exposed faces visible from $\pm\infty$ form either a one-vertex boundary or a permissible boundary. This way we can attach either a standard/permissible cusp or a layered solid torus/layered chain, depending on context. 

We work in the universal cover of $T^2\times\mathbb{R}$, as viewed from $+\infty$. 
All triangulations we construct will have a standard cusp for $\cusp_2$, so we can always begin by introducing three ideal edges that form a one-vertex triangulation of the torus $T_{-1/2}$. 
Because we are interested in triangulations with as few tetrahedra as necessary, we attempt to `reuse' the three edges in $T_{-1/2}$ by taking isotopic copies, before introducing additional edges only as necessary.

\subsection{A first triangulation}
Introduce three edges as shown in Figure~\ref{fig:T1-cusp2}. We see two copies of the red and orange edges, which glue to form a one-vertex triangulation of $T_{-1/2}$. Coning down to $-\infty$ forms a standard cusp for $\cusp_2$, as shown in Figure~\ref{fig:T1-cusp2}.

\begin{figure}[h]
    \centering
    \begin{subfigure}[t]{0.45\textwidth}
    \centering
    \includegraphics[width=0.9\linewidth]{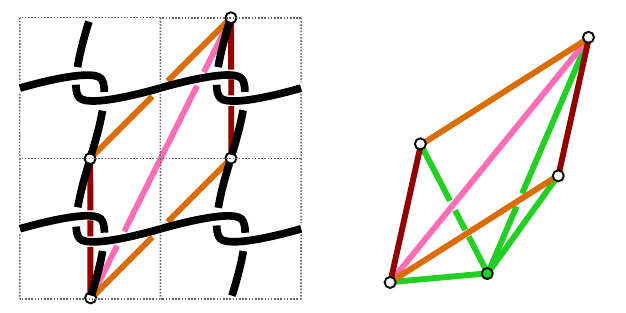}
    \caption{The three edges below the plane form a one-vertex triangulation of $T_{-1/2}$, from which we cone down to $\cusp_2$ to form a standard cusp.}
    \label{fig:T1-cusp2}
    \end{subfigure}
    \begin{subfigure}[t]{0.45\textwidth}
    \centering
    \includegraphics[width=0.9\linewidth]{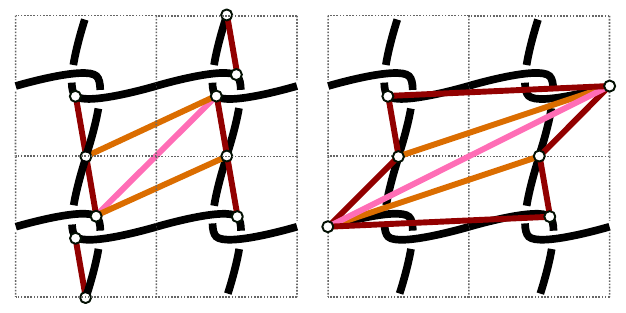}
    \caption{The three edges are isotoped along cusp $\cusp_0$ to be reused in new positions. Extra copies of the red edge are included to form two more triangular faces.}
    \label{fig:T1-isotopy}
    \end{subfigure}
    \caption{The first steps in constructing triangulation $\mathbf{T}_1$.}
    \label{fig:T1-part1}
\end{figure}

Next, we drag the endpoints of these edges along $\cusp_0$ and see that the pink edge can be pulled through so that it lies completely in $T_{+1/2}$. In Figure~\ref{fig:T1-isotopy}, we have included additional copies of the red edge, which are all either isotopic to each other or correspond to different lifts in the universal cover. By introducing one more edge (the teal edge in Figure~\ref{fig:T1-centre-tets}) we form two new tetrahedra. At this point we have a one-vertex triangulation of $T_{+1/2}$ (see Figure~\ref{fig:T1-cusp1}), so we can cone up to $\cusp_1$, forming another standard cusp. 

\begin{figure}[h]
    \centering
    \begin{subfigure}[t]{0.45\textwidth}
    \centering
    \includegraphics[width=0.9\linewidth]{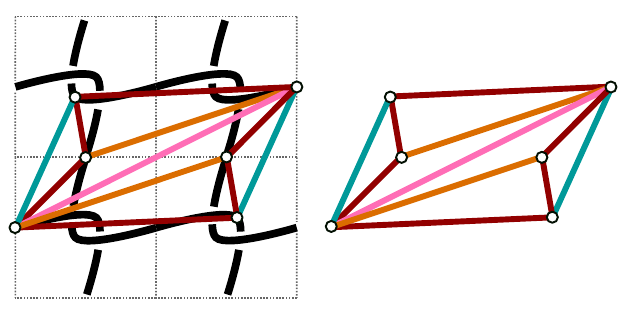}
    \caption{By adding only one extra edge, we form two more tetrahedra. Note that the two `inner' vertices in this figure are located below the others on $T_{-1/2}$.}
    \label{fig:T1-centre-tets}
    \end{subfigure}
    \begin{subfigure}[t]{0.45\textwidth}
    \centering
    \includegraphics[width=0.9\linewidth]{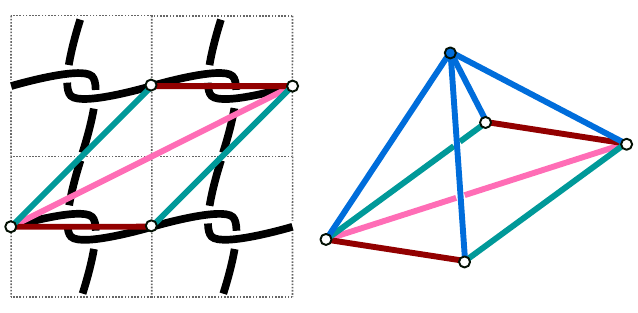}
    \centering
    \caption{The top faces of the new tetrahedra form a one-vertex triangulation of $T_{+1/2}$, and so we may cone upwards to cusp $\cusp_1$, forming our final two tetrahedra.}
    \label{fig:T1-cusp1}
    \end{subfigure}
    \caption{Forming the centre tetrahedra and cusp $\cusp_1$ for $\mathbf{T}_1$.}
    \label{fig:T1-part2}
\end{figure}

Figure~\ref{fig:T1-all-tets} shows all six tetrahedra just described, with labels added for use in Regina. The corresponding Regina gluing table, labelled $\mathbf{T}_1$, is shown below. Note that the faces 0(012), 1(210), 0(013) and 1(031) were not implicitly paired in our construction, however, by taking different lifts of the tetrahedra in Figure~\ref{fig:T1-centre-tets} we can see that 0(012) glues to 1(210) via the East-West identification, and 0(013) glues to 1(031) via the North-South identification. Regina recognises this triangulation as \texttt{s776 \#3}, which is the third minimal triangulation of the magic manifold (using its SnapPea census name). 

\begin{figure}[h]
    \centering
    \includegraphics[width=0.7\linewidth]{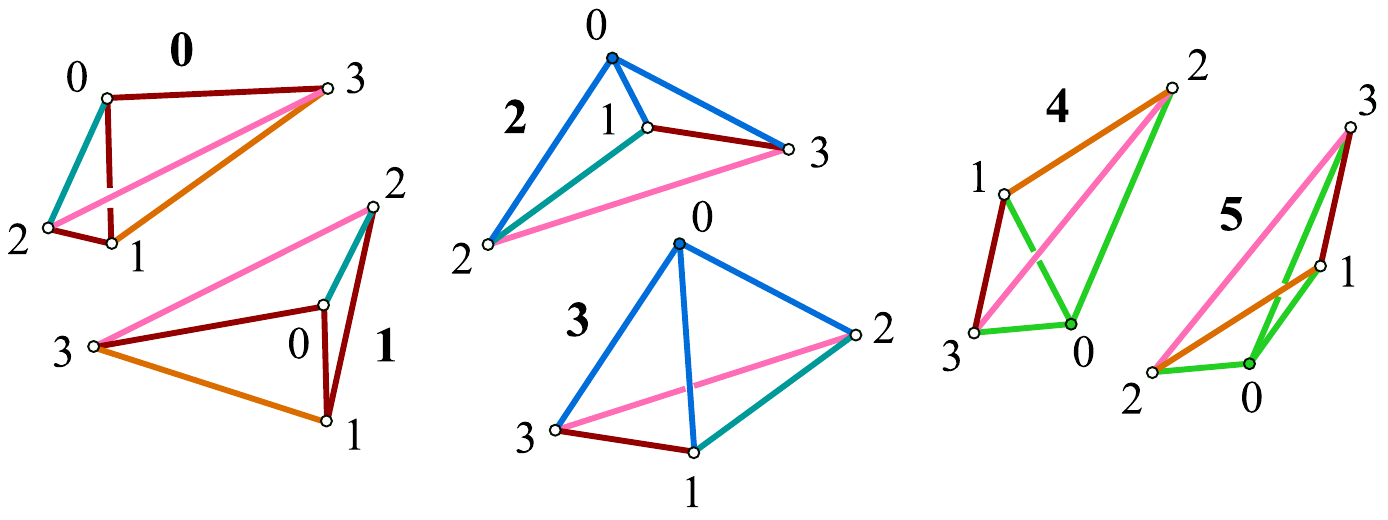}
    \caption{All tetrahedra in the first triangulation of $M_3$, with edges coloured as in Figures~\ref{fig:T1-part1} and~\ref{fig:T1-part2}, and vertices labelled as in the gluing table for $\mathbf{T}_1$.} 
    \label{fig:T1-all-tets}
\end{figure}

\vspace{2mm}
\begin{adjustbox}{scale=0.9, center}
\renewcommand{\arraystretch}{1.2}
\begin{tabular}{c|c|c|c|c}
    $\mathbf{T}_1$ & Face 012 & Face 013 & Face 023 & Face 123 \\
    \hline
    0 & 1(210) & 1(031) & 2(123) & 4(132) \\
    1 & 0(210) & 0(031) & 3(123) & 5(132) \\
    2 & 3(021) & 3(031) & 3(032) & 0(023) \\
    3 & 2(021) & 2(031) & 2(032) & 1(023) \\
    4 & 5(021) & 5(031) & 5(032) & 0(132) \\
    5 & 4(021) & 4(031) & 4(032) & 1(132)
\end{tabular}
\end{adjustbox}
\vspace{2mm}

If we kept $T_{-1/2}$ and $T_{+1/2}$ exposed, instead of coning to each of $\cusp_1$ and $\cusp_2$, we would be left with a core consisting of the two tetrahedra labelled 0 and 1. In Figure~\ref{fig:T1-slopes} we see that the boundary edges visible from cusp $\cusp_1$ have slopes $\left\{\myfrac{1}{0},\myfrac{-1}{1},\myfrac{-2}{1}\right\}$ (under the framing of $\cusp_1$), while the boundary edges visible from cusp $\cusp_2$ also have slopes $\left\{\myfrac{1}{0},\myfrac{-1}{1},\myfrac{-2}{1}\right\}$ (though now with respect to the framing of $\cusp_2$). 

We let $\mathcal{P}_i=\left\{\myfrac{1}{0},\myfrac{-1}{1},\myfrac{-2}{1}\right\}$ and write this first triangulation as $\mathbf{T}_1=\mathcal{T}(\mathcal{P}_1,\mathcal{P}_2;2)$.

\begin{figure}[!h]
    \centering
    \includegraphics[width=0.45\linewidth]{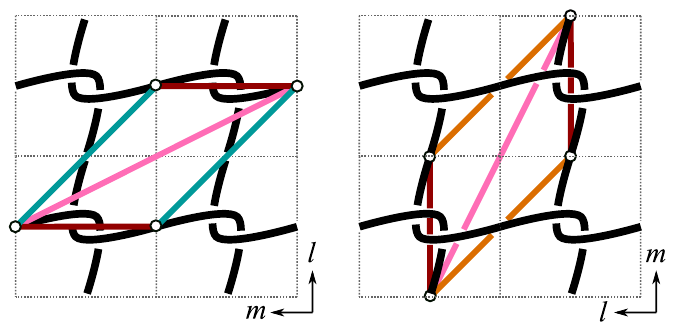}
    \caption{The boundary edges for $\mathbf{T}_1$. In cusp $\cusp_1$ (left), the red edge is 1(03) and has slope $1/0$, the teal edge is 1(02) and has slope $-1/1$, and the pink edge is 1(32) and has slope $-2/1$. In cusp $\cusp_2$ (right), the red and pink edges are 1(12) and 1(23) and also have slopes $1/0$ and $-2/1$ with respect to the $\cusp_2$ framing, and the orange edge is 1(13) and has slope $-1/1$.}
    \label{fig:T1-slopes}
\end{figure}

\subsection{Additional triangulations}
In the next triangulations, we denote the slope information for one-vertex boundaries by $\mathcal{P}_i,\,\mathcal{Q}_i$ or $\mathcal{R}_i$ with
\vspace{-3mm}
\[\mathcal{P}_i=\left\{\myfrac{1}{0},\myfrac{-1}{1},\myfrac{-2}{1}\right\}, \ \ \mathcal{Q}_i=\left\{\myfrac{1}{0},\myfrac{-2}{1},\myfrac{-3}{1}\right\}, \  \text{ and } \  \mathcal{R}_i=\left\{\myfrac{1}{0},\,\myfrac{1}{1},\,\myfrac{0}{1}\right\}.\] 

\subsubsection{Triangulation 2}
The second triangulation we construct begins with the same edges in $T_{-1/2}$, with slopes $\myfrac{1}{0},\myfrac{-1}{1}$, and $\myfrac{-2}{1}$. We proceed exactly as before, until we come to introducing a new edge in $T_{+1/2}$. This time we choose the opposite diagonal, placing the teal edge as in Figure~\ref{fig:T2-edges} where it has slope $\myfrac{-3}{1}$ (instead of $\myfrac{-1}{1}$). 
In this way we obtain $\mathbf{T}_2=\mathcal{T}\big(\mathcal{Q}_1,\mathcal{P}_2; 2\big)$, with gluing information as below. 
The labelling is such that 0(03)\gluesto 1(30) represents the teal edge (with slope $\myfrac{-3}{1}$ in $\cusp_1$) and 0(23)\gluesto 1(32) represents the pink edge (which has slope $\myfrac{-2}{1}$ in each respective cusp). We use the labels $\text{X}_1,\text{Y}_1,\text{X}_2,\text{Y}_2$ to indicate the faces where the two tetrahedra of a standard cusp $\text{X}_i\cup \text{Y}_i$ may be attached. 

\vspace{2mm}
\begin{adjustbox}{scale=0.9, center}
\renewcommand{\arraystretch}{1.2}
\begin{tabular}{c|c|c|c|c}
    $\mathbf{T}_2$ & Face 012 & Face 013 & Face 023 & Face 123 \\
    \hline
    0 & 1(102) & 1(310) & $\text{X}_1$ & $\text{X}_2$ \\ 
    1 & 0(102) & 0(310) & $\text{Y}_1$ & $\text{Y}_2$ 
\end{tabular}
\end{adjustbox}

\begin{figure}[!h]
    \centering
    \includegraphics[width=0.8\linewidth]{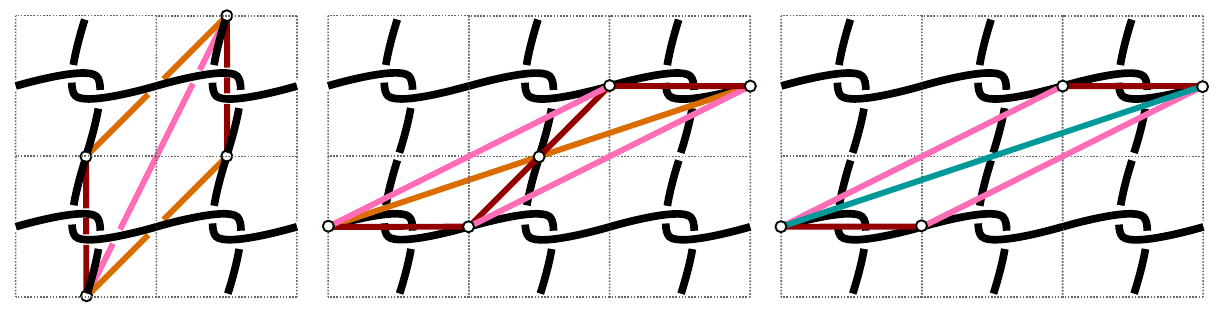}
    \caption{For $\mathbf{T}_2$, we start with the same edges on $T_{-1/2}$ but this time we consider different lifts in the universal cover, and introduce the teal edge so that it now has slope $-3/1$ with respect to the framing of $\cusp_1$.}
    \label{fig:T2-edges}
\end{figure}

\subsubsection{Triangulation 3}
The third triangulation we construct has two standard cusps with non-negative boundary slopes. 
We start by introducing three edges on $T_{-1/2}$ with slopes $\myfrac{0}{1},\,\myfrac{1}{1},\, \myfrac{1}{0}$. We form the first two tetrahedra by coning  \emph{into} the point on $\cusp_0$ that meets $T_{+1/2}$, as shown in Figure~\ref{fig:T3-tets}, left. This requires two new edges to be introduced (coloured teal and purple in the figure).

\begin{figure}[!h]
    \centering
    \includegraphics[width=0.72\linewidth]{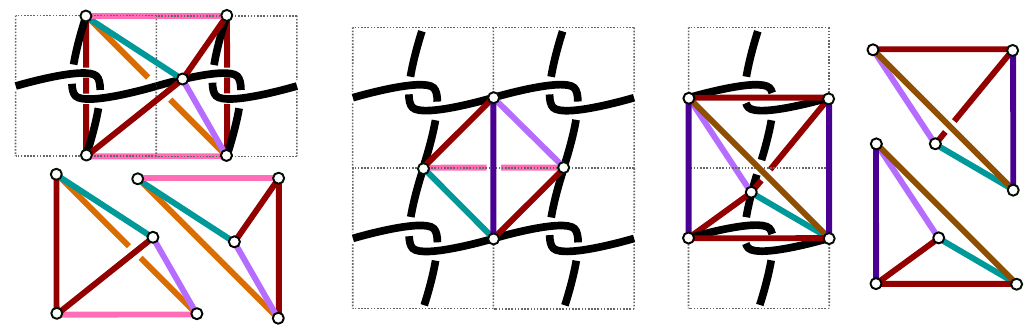}
    \caption{Edges forming the 5-tetrahedron core of $\mathbf{T}_3$.}
    \label{fig:T3-tets}
\end{figure}

In this instance, we repeat the same steps working inwards from $T_{+1/2}$. The one-vertex triangulation of $T_{+1/2}$ reuses the red edge, which has slope $\myfrac{1}{0}$ with respect to the framing of $\cusp_1$, and we introduce 2 new edges with slopes $\myfrac{0}{1}$ and $\myfrac{1}{1}$. We then cone down to the point on $\cusp_0$ that meets $T_{-1/2}$, this time reusing the teal and purple edges (see Figure~\ref{fig:T3-tets}, right). At this point, notice that the existing edges also span an additional tetrahedron shown in Figure~\ref{fig:T3-tets}, centre. 
These 5 tetrahedra form $\mathbf{T}_3=\mathcal{T}\big(\mathcal{R}_1,\mathcal{R}_2; 5\big)$, as described in Figure~\ref{fig:T3-Regina}. 
\begin{figure}[h]
\centering
\includegraphics[width=0.45\linewidth]{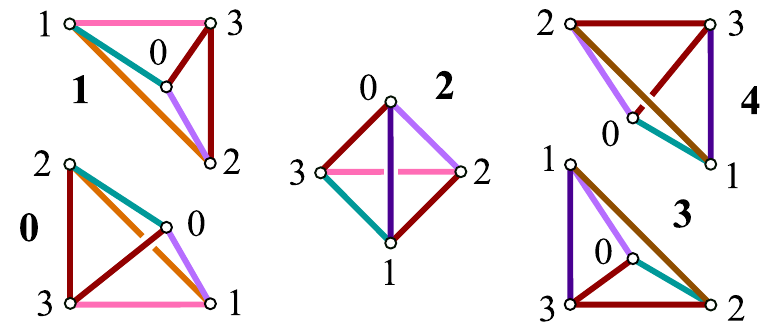} \quad 
\begin{adjustbox}{scale=0.9, valign=b}
\renewcommand{\arraystretch}{1.2}
\begin{tabular}{c|c|c|c|c}
    $\mathbf{T}_3$ & Face 012 & Face 013 & Face 023 & Face 123 \\
    \hline
    0 & 1(021) & 2(023) & 3(203) & $\text{X}_2$ \\ 
    1 & 0(021) & 2(132) & 4(203) & $\text{Y}_2$ \\ 
    2 & 3(130) & 4(310) & 0(013) & 1(031) \\
    3 & 4(021) & 2(201) & 0(023) & $\text{X}_1$ \\ 
    4 & 3(021) & 2(310) & 1(203) & $\text{Y}_1$ 
\end{tabular}
\end{adjustbox}
\vspace{2mm}
    \caption{$\mathbf{T}_3=\mathcal{T}\big(\mathcal{R}_1,\mathcal{R}_2; 5\big)$: recognised as \texttt{s776} after adding cusps and simplifying. }
    \label{fig:T3-Regina}
\end{figure}

By attaching standard cusps to both boundaries (labelled $\text{X}_i,\text{Y}_i$ in the gluing table), we form a 9-tetrahedron triangulation of $M_3$. While this triangulation is not immediately recognisable to Regina (it is clearly not minimal), it can be simplified then recognised as \texttt{s776}.

\subsubsection{Triangulation 4}
The next triangulation has one permissible cusp and one standard cusp. 
Recall that $\mathcal{R}$ is the set of slopes $\left\{\myfrac{1}{0},\,\myfrac{1}{1},\,\myfrac{0}{1}\right\}$. Let $\mathcal{R}'=\left\{\myfrac{1}{0},\myfrac{-1}{1},\,\myfrac{0}{1}\right\}$. 
In addition, let $\Hat{\mathcal{V}}=\{\myfrac{1}{0},\,\myfrac{1}{1};\,-\}$ and $\ \Hat{\mathcal{U}}=\{\myfrac{1}{0},\myfrac{-5}{1};\,+\}$ each be the slope information for a permissible cusp.

In Figure~\ref{tab:T4-tets}, we see the two tetrahedra from $\mathbf{T}_3$ that formed the one-vertex boundary for $\cusp_2$. By taking appropriate lifts of the four `top' faces of these two tetrahedra, we can identify a permissible boundary with negative diagonals, as shown to the right. From this diagram, we see that the $\alpha$ curve, which is parallel to the red edge 0(03), has slope $\myfrac{1}{0}$ with respect to the $\cusp_1$ framing. Meanwhile, the $\beta$ curve, which is parallel to the concatenation of the teal edge 0(02) and the purple edge 0(10), has slope $\myfrac{1}{1}$. The gluing table for this triangulation, denoted $\Hat{\mathbf{T}}_4=\mathcal{T}\big(\Hat{\mathcal{V}}_1,\mathcal{R}_2;\,2 \big)$, is given below, with X, Y indicating the faces of the one-vertex boundary, and A, B, C, D indicating the faces of the permissible boundary, matching the labels from Figure~\ref{fig:RST-bdry} (Section~\ref{sec:triangulated-dehn-fillings}). 

\begin{figure}[h]
\centering
 \includegraphics[width=0.45\linewidth]{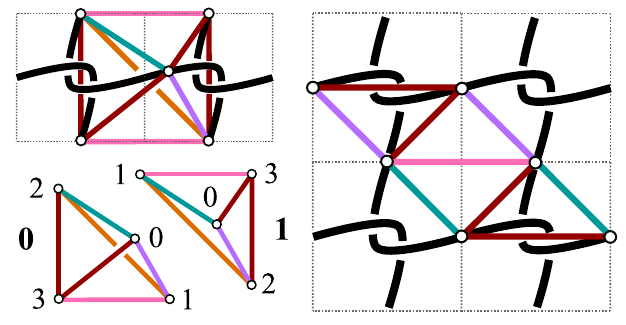}   
\caption{$\Hat{\mathbf{T}}_4=\mathcal{T}\big(\Hat{\mathcal{V}}_1,\mathcal{R}_2;\,2 \big)$: recognised as \texttt{s776} after adding cusps and simplifying.}
    \label{tab:T4-tets}
\end{figure}

\vspace{2mm}
\begin{adjustbox}{scale=0.9, center}
\renewcommand{\arraystretch}{1.2}
\begin{tabular}{c|c|c|c|c}
    $\Hat{\mathbf{T}}_4$ & Face 012 & Face 013 & Face 023 & Face 123 \\
    \hline 
    0 & 1(021) & C & A & X \\
    1 & 0(021) & B & D & Y \\
\end{tabular}
\end{adjustbox}

\subsubsection{Triangulation 5}
The final triangulation we describe is $\Hat{\mathbf{T}}_5=\mathcal{T}\big(\Hat{\mathcal{U}}_1,\mathcal{R}'_2;\,2\big)$, with tetrahedra as shown in the top of Figure~\ref{fig:T5}. 
A considerable amount of skewing is required to visualise the $\cusp_1$ and $\cusp_2$ boundaries, as we see in the bottom of Figure~\ref{fig:T5}. The red, pink and orange edges form the boundary triangulation for cusp $\cusp_2$. It is relatively easy to see (from the dashed edges in the diagram) that their slopes are indeed $\myfrac{1}{0},\,\myfrac{0}{1}$ and $\myfrac{-1}{1}$, respectively, with respect to the framing of $\cusp_2$.

\begin{figure}[h]
    \centering
    \includegraphics[width=0.6\linewidth]{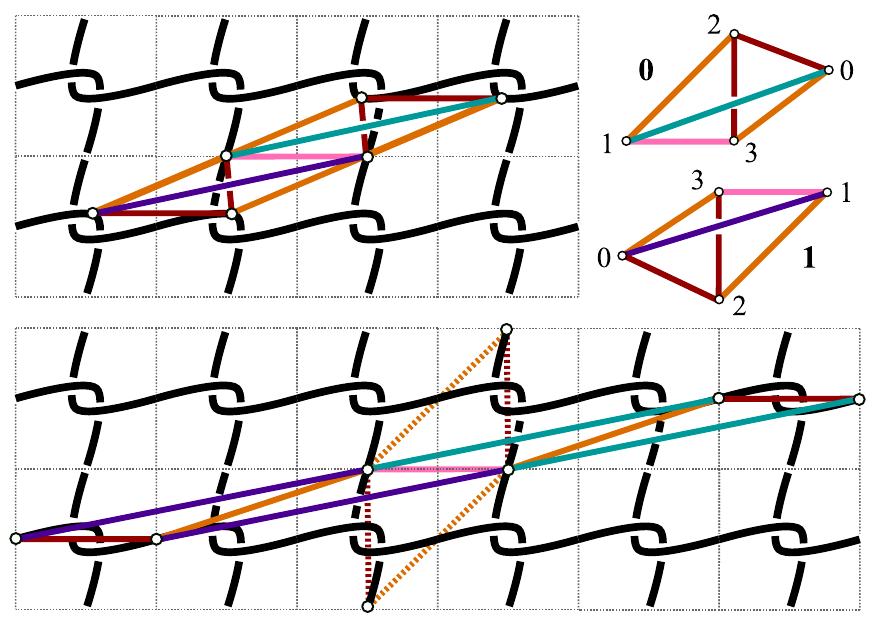}
    \caption{The two core tetrahedra in $\Hat{\mathbf{T}}_5$, with labelling used in the Regina gluing table.} 
    \label{fig:T5}
\end{figure}

For the permissible boundary in $\cusp_1$, we first skew the tetrahedra in Figure~\ref{fig:T5} (top) by sliding the top-right red edge further to the right and the bottom-left red edge further to the left. Then we use different lifts of two of the faces, extending the fundamental domain even further to the top-right and bottom-left, as seen in Figure~\ref{fig:T5} (bottom). 

This realises a permissible boundary with positive diagonals. Here, $\alpha$ is again parallel to the red edge and hence has slope $\myfrac{1}{0}$, and we can see that the $\beta$ slope is $\myfrac{-5}{1}$, with respect to the $\cusp_1$ framing. The gluing information for this triangulation is presented below, with X, Y, A, B, C, D again used to indicate which faces are which in the two boundaries. 

\vspace{2mm}
\begin{adjustbox}{scale=0.9,center}
\renewcommand{\arraystretch}{1.2}
\begin{tabular}{c|c|c|c|c}
    $\Hat{\mathbf{T}}_5$ & Face 012 & Face 013 & Face 023 & Face 123 \\
    \hline
    0 & D & C & 1(320) & X \\
    1 & A & B & 0(320) & Y 
\end{tabular}
\end{adjustbox}

\section{Counting tetrahedra}\label{sec:counting-tets}
In this section we verify that the five triangulations described in Section~\ref{sec:triangulations} account for minimal triangulations of all the census knots listed in Appendix~\ref{app:census-list} (apart from the figure-8 knot). 

Recall that a triangulation denoted by $\mathcal{T}\big(\mathcal{S}_1,\mathcal{S}_2;\, t_0\big)$ has $t_0$ core tetrahedra and two one-vertex boundaries with slope information $\mathcal{S}_i$ corresponding to each cusp $\cusp_i$. When $\mathcal{S}_1$ is replaced by slope information $\Hat{\mathcal{S}}_1$, this indicates that $\cusp_1$ has a permissible boundary and can be Dehn filled using a layered chain. 

Let $t_1$ be the number of tetrahedra required to build either a layered solid torus or a layered chain with primary filling slope $\myfrac{r}{s}$ from a standard or permissible boundary with slope information $\mathcal{S}_1$ or $\Hat{\mathcal{S}}_1$, respectively. Define $t_2$ similarly for the layered solid torus that realises the secondary filling slope $\myfrac{t}{u}$ in $\cusp_2$, with boundary slopes $\mathcal{S}_2$. The total size of any triangulation filled in this way is then $t_0+t_1+t_2$. 

\subsection{Distances in the Farey diagram}
Again, we state the following without proof. Hatcher's book~\cite{HatcherNumbers} contains the relevant results, albeit with slightly different conventions.
\begin{fact}\label{fact:CF-distance} Continued fractions encode distances in the Farey diagram.
    \begin{enumerate}[label=\roman*.]
        \item Every rational number $\,\myfrac{p}{q}>0$ has a unique continued fraction of the form $\CF{a_0,a_1, \cdots, a_{m-1}, a_m}$ with $a_0\geq 0$, all $a_1,\cdots,a_{m-1}$ strictly positive and $a_m>1$. \label{fact:unique-pos-CF}
        \item There is a unique point $\{\myfrac{p}{q},\myfrac{a}{b},\myfrac{c}{d}\}$ in the Farey diagram with $a,b,c,d\geq 0$ and $\dfrac{p}{q}=\dfrac{a+c}{b+d}$.
        \item The length of the path between the points $\{\myfrac{p}{q},\myfrac{a}{b},\myfrac{c}{d}\}$ and $\{0,-1,\infty\}$ is equal to the sum of the coefficients $\sum_{i=0}^m a_i$.
    \end{enumerate}
\end{fact}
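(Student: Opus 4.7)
The plan is to prove the three parts in sequence, each building on the previous. Part (i) is the standard existence and uniqueness of the positive continued fraction expansion via the Euclidean algorithm. Given $\myfrac{p}{q} > 0$, set $a_0 = \lfloor p/q \rfloor \geq 0$ and recursively expand $1/(\myfrac{p}{q} - a_0)$, which is again a positive rational of strictly smaller denominator. Termination is thus guaranteed, and the constraint $a_m > 1$ gives uniqueness by ruling out the well-known identity $\CF{a_0,\ldots,a_{m-1},a_m} = \CF{a_0,\ldots,a_{m-1},a_m-1,1}$.

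For part (ii), I would appeal to the classical correspondence between the Farey tessellation and the Stern-Brocot tree. The Farey triangles whose vertex coordinates are all non-negative are precisely the vertices of a trivalent sub-tree of the dual graph, rooted at $\{\myfrac{0}{1},\myfrac{1}{1},\myfrac{1}{0}\}$. Each positive rational $\myfrac{p}{q}$ occurs as the mediant of a unique pair of Stern-Brocot parents $\myfrac{a}{b},\myfrac{c}{d}$, both non-negative, satisfying $a+c=p$, $b+d=q$, and $ad-bc=\pm 1$; this triple is the desired Farey triangle.

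For part (iii), I would proceed by induction on $m$. The base case $m=0$ requires $\myfrac{p}{q}=a_0$ with $a_0 > 1$, and the target triangle $\{\myfrac{a_0-1}{1},\myfrac{a_0}{1},\myfrac{1}{0}\}$ is reached from $\{\myfrac{0}{1},\myfrac{-1}{1},\myfrac{1}{0}\}$ in exactly $a_0$ steps along the `integer chain' of triangles $\{\myfrac{k}{1},\myfrac{k+1}{1},\myfrac{1}{0}\}$ for $k=0,\ldots,a_0-1$. For the inductive step, I would use that $x \mapsto 1/(x-a_0)$ is an element of $\mathrm{PSL}(2,\mathbb{Z})$ and thus acts as a path-length-preserving isometry of the Farey diagram. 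Applying this map sends $\myfrac{p}{q}$ to the positive rational with continued fraction $\CF{a_1,\ldots,a_m}$ and shifts the base triangle appropriately, so the induction hypothesis supplies the remaining $\sum_{i \geq 1} a_i$ steps.

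The principal obstacle is in part (iii), specifically in aligning orientations so the inductive reduction does not double-count the edge where the two sub-paths meet. After the first $a_0$ steps along the integer chain, the new base triangle has three outgoing edges in the dual graph, one of which is the edge just traversed; I need to confirm that $a_1$ counts steps along one of the other two outgoing edges, in a direction consistent with the alternating `turn' structure of the Farey diagram. This amounts to verifying a compatibility between the $\mathrm{PSL}(2,\mathbb{Z})$-action and the combinatorics of continued fractions, and is the point at which the proof becomes most delicate.
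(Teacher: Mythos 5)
The paper gives no argument for this Fact at all: it is explicitly stated without proof, with a pointer to Hatcher's book, so the only question is whether your outline stands on its own. In substance it does, and it is the standard route: part (i) is the Euclidean algorithm with the usual uniqueness normalisation, part (ii) is the Stern--Brocot parent/mediant correspondence, and the induction in (iii) is sound. Two points need attention. First, a small correction: $x\mapsto 1/(x-a_0)$ corresponds to an integer matrix of determinant $-1$, so it lies in $\mathrm{PGL}(2,\mathbb{Z})$, not $\mathrm{PSL}(2,\mathbb{Z})$; this is harmless, since any integer matrix with determinant $\pm1$ preserves the condition $ad-bc=\pm1$ and hence induces an automorphism of the Farey triangulation and an isometry of its dual tree. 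Second, the ``delicate point'' you flag is a genuine missing step, but it closes quickly. Work with the inverse map $\psi(y)=a_0+1/y$: it carries the base point $\{0,-1,\infty\}$ exactly to $\{a_0-1,\,a_0,\,\infty\}$, the last triangle of your integer chain, and it carries the distinguished triangle of $\CF{a_1,\ldots,a_m}$ from part (ii) to that of $p/q$, because the corresponding matrix has non-negative entries, hence preserves non-negativity of vertex representatives and sends mediants to mediants, and uniqueness in (ii) then identifies the image. The induction hypothesis therefore produces a walk of length $a_0+\sum_{i\geq1}a_i$ from the base to the triangle of $p/q$.

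To see that this walk is geodesic, use that the dual graph is a tree, so it suffices to rule out an immediate backtrack at the single junction. The last step of the integer chain crosses the Farey edge joining $a_0-1$ to $\infty$; deleting the dual edge across it separates the tree into the component whose triangles have all vertices in $[-\infty,a_0-1]\cup\{\infty\}$ and the component containing $\{a_0-1,a_0,\infty\}$. When $m\geq 1$ we have $a_0<p/q<a_0+1$, so the target triangle has the vertex $p/q$ strictly between $a_0-1$ and $\infty$ and lies in the latter component; hence the geodesic from the junction to the target never recrosses that edge, there is no cancellation, and the distance is exactly $\sum_i a_i$. (When $a_0=0$ the chain is empty and there is nothing to check.) With these two additions your sketch is a complete proof; it is an inductive repackaging of the argument the paper's citation has in mind, where Hatcher instead reads the coefficients $a_i$ off the fan structure of the strip of Farey triangles traversed by the dual path, obtaining all three parts in one picture rather than by recursion.
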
 

We can use the symmetry of the Farey diagram for negative $\myfrac{p}{q}$, so it is sufficient to consider the positive continued fraction for $\abs{\,\myfrac{p}{q}}$.
\begin{definition}
    For any rational number $\myfrac{p}{q}\in\mathbb{Q}$, take the positive continued fraction for its absolute value $\abs{\,\myfrac{p}{q}}=\CF{a_0;\,a_1, \cdots,\,a_m}$ and define the \emph{norm}
    \[\big\Vert\,\myfrac{p}{q}\big\Vert=\sum_{i=0}^m \,\abs{a_i}.\]
\end{definition}

\subsection{Counting tetrahedra in layered solid tori.} 
Recall from Section~\ref{sec:triangulated-dehn-fillings} that the construction of a layered solid torus is captured by a path in the Farey diagram, starting at the point associated to the triple of boundary slopes, and ending at the nearest point associated to the slope of the desired Dehn filling. For each step in this path, one tetrahedron is added, except at the last step, where a fold is performed instead. 

Recall the boundary triples $\mathcal{P}$, $\mathcal{Q}$, $\mathcal{R}$ and $\mathcal{R}'$, corresponding to the points labelled in Figure~\ref{fig:LST-Farey-paths}.
\begin{proposition}\label{prop:LST-count}
    The number of tetrahedra required to triangulate a $\myfrac{p}{q}$ Dehn filling using a layered solid torus in a one-vertex torus boundary depends on the slopes of the boundary edges and the interval $\myfrac{p}{q}$ lies in. 
    For boundary slopes $\mathcal{P}$, $\mathcal{Q}$, $\mathcal{R}$ or $\mathcal{R}'$, the number of tetrahedra is $\norm{\,\myfrac{p}{q}}+a$, with $a\in\mathbb{Z}$ as given below. 
    
\begin{adjustbox}{scale=0.9,center}
\begin{tikzpicture}[x = 12mm, y = 7mm, node distance = 0mm, outer sep = 0mm]
\foreach \x in {-3,...,1}{
    \draw[lightgray] (\x,-0.5) -- (\x,3.6);}
\draw[black] (-3.2,3.6) -- (1.2,3.6);
% columns
\node at (-3,4.2) {$-\frac{1}{0}$};
\node at (-2,4.2) {$-\frac{2}{1}$};
\node at (-1,4.2) {$-\frac{1}{1}$};
\node at (0,4.2) {$\frac{0}{1}$};
\node at (1,4.2) {$\frac{1}{0}$};
% rows
\node at (-3.3,3) {$\mathcal{R}$};
    \node at (-2.5,3) {$-1$};
    \node at (-1.5,3) {$-1$};
    \node at (-0.5,3) {$-1$};
    \node at (0.5,3) {$-2$};
\node at (-3.3,2) {$\mathcal{R}'$};
    \node at (-2.5,2) {$-2$};
    \node at (-1.5,2) {$-2$};
    \node at (-0.5,2) {$-2$};
    \node at (0.5,2) {$-1$};
\node at (-3.3,1) {$\mathcal{P}$};
    \node at (-2.5,1) {$-3$};
    \node at (-1.5,1) {$-3$};
    \node at (-0.5,1) {$-1$};
    \node at (0.5,1) {$0$};
\node at (-3.3,0) {$\mathcal{Q}$};
    \node at (-2.5,0) {$-4$};
    \node at (-1.5,0) {$-2$};
    \node at (-0.5,0) {$0$};
    \node at (0.5,0) {$1$};
\end{tikzpicture}
    
\end{adjustbox}
\end{proposition}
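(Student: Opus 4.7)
The plan is to reduce the proposition to a combinatorial computation in the dual tree of the Farey triangulation. As described in Section~\ref{sec:triangulated-dehn-fillings}, a layered solid torus realising a $\myfrac{p}{q}$ Dehn filling from a starting boundary triple $S$ uses exactly $d\bigl(\pi(\myfrac{p}{q}),S\bigr)-1$ tetrahedra, where $d$ is the path distance in the trivalent dual tree and $\pi(\myfrac{p}{q})$ denotes the unique Farey triangle whose mediant is $\myfrac{p}{q}$ (extended to $\myfrac{p}{q}<0$ by the symmetry $x\mapsto -x$). So it suffices to compute $d\bigl(\pi(\myfrac{p}{q}),S\bigr)$ for each of the four starting points $\mathcal{P},\mathcal{Q},\mathcal{R},\mathcal{R}'$ and each of the four intervals cut out by the slopes $-2,-1,0$.

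The base case is supplied by Fact~\ref{fact:CF-distance}: since $\mathcal{R}'=\{0,-1,\infty\}$, one reads off $d\bigl(\pi(\myfrac{p}{q}),\mathcal{R}'\bigr)=\norm{\myfrac{p}{q}}$ for every $\myfrac{p}{q}>0$. The reflection $x\mapsto -x$ is a Farey-diagram isomorphism that swaps $\mathcal{R}$ with $\mathcal{R}'$ and preserves the norm, so symmetrically $d\bigl(\pi(\myfrac{p}{q}),\mathcal{R}\bigr)=\norm{\myfrac{p}{q}}$ for every $\myfrac{p}{q}<0$. This fixes one column of the table on each side of $0$.

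Next I would record the relevant local tree structure: the four starting points lie on a length-$3$ geodesic $\mathcal{Q}$--$\mathcal{P}$--$\mathcal{R}'$--$\mathcal{R}$, with consecutive pairs meeting across the Farey edges $\{-2,\infty\}$, $\{-1,\infty\}$, and $\{0,\infty\}$. Removing an internal node from this geodesic separates the tree into subtrees that can be labelled by the open intervals of rational slopes they contain, so for each of the four intervals in the table I can identify which of $\mathcal{Q},\mathcal{P},\mathcal{R}',\mathcal{R}$ actually lie on the unique geodesic from $\pi(\myfrac{p}{q})$ to the appropriate base point (namely $\mathcal{R}'$ for $\myfrac{p}{q}>0$, or $\mathcal{R}$ for $\myfrac{p}{q}<0$).

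With this in hand the remaining work is bookkeeping. Applying the tree identity $d(A,C)=d(A,B)+d(B,C)$ whenever $B$ lies on the $A$-to-$C$ geodesic, each of the sixteen table entries becomes $\norm{\myfrac{p}{q}}+a$, where $a$ is the signed displacement of $S$ from the geodesic joining $\pi(\myfrac{p}{q})$ to the base point; for example, in the interval $(-\infty,-2)$ all four starting points lie on the geodesic in the order $\pi(\myfrac{p}{q})\to\cdots\to\mathcal{Q}\to\mathcal{P}\to\mathcal{R}'\to\mathcal{R}$, giving corrections $-4,-3,-2,-1$, while in $(-1,0)$ the subtree containing $\pi(\myfrac{p}{q})$ branches off at $\mathcal{R}'$ so $\mathcal{P}$ and $\mathcal{Q}$ contribute positive offsets. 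The only subtlety I anticipate is the degenerate situation in which $\myfrac{p}{q}$ is itself one of the three slopes of $S$: there $d=0$ and the formula would predict a negative count, but this is precisely the $0$-tetrahedron (Möbius-band spine) layered solid torus and can be flagged separately on the finite list of such boundary slopes.
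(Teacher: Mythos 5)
Your proposal is correct and follows essentially the same route as the paper: the anchor cases $d=\norm{\,\myfrac{p}{q}}$ from $\mathcal{R}'$ (for positive slopes) via Fact~\ref{fact:CF-distance} and from $\mathcal{R}$ (for negative slopes) via the reflection symmetry, followed by transferring these distances to the other starting points, which the paper does by inspecting the Farey-diagram figure and you do by additivity of distances along the geodesic $\mathcal{Q}$--$\mathcal{P}$--$\mathcal{R}'$--$\mathcal{R}$ in the dual tree. Your explicit tree bookkeeping (and your flagging of the degenerate case where $\myfrac{p}{q}$ is itself a boundary slope, realised by the $0$-tetrahedron M\"obius-band fold) is a slightly more careful rendering of the same comparison argument, and the resulting offsets agree with the table.
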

\begin{proof}
    Let $\mathcal{S}\left[\,\myfrac{p}{q}\right]$ denote the path in the Farey diagram that begins at the point $\mathcal{S}\in\{\mathcal{R},\mathcal{R}',\mathcal{P},\mathcal{Q}\}$ and ends at the nearest point containing $\myfrac{p}{q}\notin\mathcal{S}$. Then the path $\mathcal{R}'\left[\,\myfrac{p}{q}\right]$ for $\myfrac{p}{q}>0$ has length $\norm{\,\myfrac{p}{q}}$ by part (iii) of Fact~\ref{fact:CF-distance}. By the symmetry of the Farey diagram, the path $\mathcal{R}\left[\,\myfrac{p}{q}\right]$ for $\myfrac{p}{q}<0$ also has length $\norm{\,\myfrac{p}{q}}$. 
    
    Since the number of tetrahedra in a layered solid torus is one less than the length of its Farey path, this accounts for the first three entries in the row labelled $\mathcal{R}$ and the fourth entry in the row labelled $\mathcal{R}'$. 
    
    Figure~\ref{fig:LST-Farey-paths} shows part of the Farey diagram with boundary triples $\mathcal{P},\,\mathcal{Q},\,\mathcal{R}$ and $\mathcal{R}'$ labelled. Selected points on the boundary of $\mathbf{H}^2$ are labelled by a continued fraction expansion. 
    
    The remaining entries in the table can be determined using Figure~\ref{fig:LST-Farey-paths}, by comparing each path type with one of $\mathcal{R}\left[\,\myfrac{p}{q}\right]$ or $\mathcal{R}'\left[\,\myfrac{p}{q}\right]$. \qedhere

    \begin{figure}[htp]
        \centering
        \includegraphics[width=0.6\linewidth]{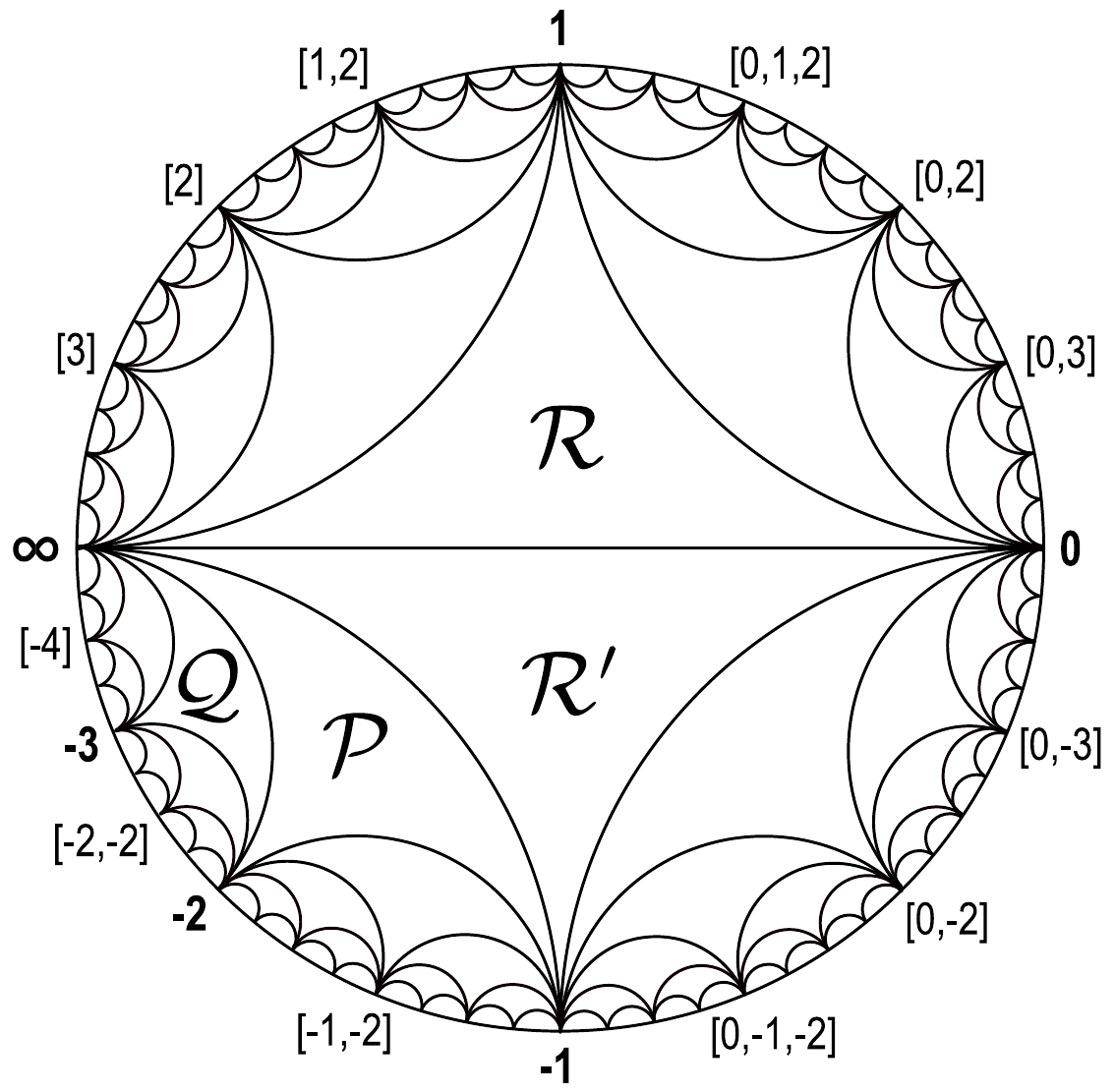}
        \caption{The number of tetrahedra required to Dehn fill a torus boundary with slope information $\mathcal{P},\,\mathcal{Q},\,\mathcal{R}$ or $\mathcal{R}'$ along any rational slope $\myfrac{p}{q}$ can be determined from this diagram.}
        \label{fig:LST-Farey-paths}
    \end{figure}
    \end{proof}

\subsection{Counting tetrahedra in layered chains.}
Next, we consider the number of tetrahedra required to Dehn fill a permissible boundary using a layered chain. Recall the sets of slope information $\Hat{\mathcal{V}}=\{\myfrac{1}{0},\,\myfrac{1}{1};\,-\}$ and $\Hat{\mathcal{U}}=\{\myfrac{1}{0},\myfrac{-5}{1};\,+\}$.
\begin{proposition}\label{prop:chain-size}
    Suppose $k$ is an integer slope in a permissible boundary with slope information $\Hat{\mathcal{S}}\in\{\Hat{\mathcal{U}},\,\Hat{\mathcal{V}}\}$. The number of tetrahedra required to perform a Dehn filling of slope $k$ using a layered chain is
\[ t=\begin{cases}
    |\,k+4\,|,      & \text{if } \Hat{\mathcal{S}}=\Hat{\mathcal{U}}, \text{ or} \\
    |\,k\,|       & \text{if } \Hat{\mathcal{S}}=\Hat{\mathcal{V}}.
\end{cases} \]    
\end{proposition}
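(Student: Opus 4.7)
The plan is to first classify, within an abstract permissible boundary, exactly which curves $m\alpha + \beta$ are made homotopically trivial by a layered chain of $t$ tetrahedra, and then substitute the specific slope data of $\Hat{\mathcal{U}}$ and $\Hat{\mathcal{V}}$ to translate the count into integer filling slopes $k$.

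For the first step, I would proceed by induction on $t$, using the base cases already worked out in Figures~\ref{fig:perm-slopes-a} and~\ref{fig:perm-slopes-b}. Those figures verify that in a $\{+\}$ permissible boundary the layered chains with $t = 0, 1, 2$ tetrahedra realize the trivial curves $\alpha + \beta$, either $\beta$ or $2\alpha + \beta$, and either $-\alpha + \beta$ or $3\alpha + \beta$, respectively. The inductive step is essentially mechanical: layering one further tetrahedron over $f$, respectively shifting and layering over $e$, shifts the eventual trivial curve by $-\alpha$, respectively $+\alpha$. This is a direct homological computation on the exposed boundary torus that admits a diagrammatic verification completely analogous to the figures already in hand. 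The conclusion is that the trivial curves reachable by a layered chain of $t$ tetrahedra in a $\{+\}$ cusp are precisely those of the form $m\alpha + \beta$ with $|m - 1| = t$. By the reflection symmetry between $\{+\}$ and $\{-\}$ permissible boundaries, the analogous conclusion for a $\{-\}$ cusp is $|m + 1| = t$.

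The second step is a short substitution. An integer filling slope $k$ corresponds to the curve $k\mu + \lambda$ in the cusp framing, and must be expressed in the form $m\alpha + \beta$ using the slopes of $\alpha$ and $\beta$ prescribed by $\Hat{\mathcal{S}}$. For $\Hat{\mathcal{V}} = \{\myfrac{1}{0}, \myfrac{1}{1}; -\}$ we have $\alpha = \mu$ and $\beta = \mu + \lambda$, so $m\alpha + \beta = (m+1)\mu + \lambda$, giving $m = k - 1$ and hence $t = |m + 1| = |k|$. For $\Hat{\mathcal{U}} = \{\myfrac{1}{0}, \myfrac{-5}{1}; +\}$ we have $\alpha = \mu$ and $\beta = -5\mu + \lambda$, so $m\alpha + \beta = (m-5)\mu + \lambda$, giving $m = k + 5$ and hence $t = |m - 1| = |k + 4|$.

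The step I expect to be the main obstacle is the bookkeeping in the inductive argument: one must verify carefully that every additional layer adjusts the eventual trivial curve by exactly $\pm \alpha$, with the sign consistently determined by the direction of layering (over $e$ versus over $f$ after an appropriate shift), and that no other trivial curves become reachable. Once that framework is in place, the two formulas follow by the routine substitutions above.
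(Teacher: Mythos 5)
Your proposal is correct and takes essentially the same route as the paper: both start from the base cases in Figures~\ref{fig:perm-slopes-a} and~\ref{fig:perm-slopes-b}, note that each additional layer shifts the trivialised curve by one copy of $\alpha$ (the paper states this as an extrapolation where you formalise it as an induction), and then substitute the slope data of $\Hat{\mathcal{U}}$ and $\Hat{\mathcal{V}}$. Your uniform formulas $t=\abs{m-1}$ for positive and $t=\abs{m+1}$ for negative diagonals are just an abstract restatement of the paper's direct count, and your substitutions reproduce $\abs{k+4}$ and $\abs{k}$ exactly as in the paper.
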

\begin{proof}
    Recall Figures~\ref{fig:perm-slopes-a} and~\ref{fig:perm-slopes-b} from Section~\ref{sec:triangulated-dehn-fillings}. If we set $\alpha$ to have slope $\myfrac{1}{0}$ and $\beta$ to have slope $\myfrac{-5}{1}$, then these figures show that we can Dehn fill a $\Hat{\mathcal{U}}$ boundary along slopes $\myfrac{-3}{1},\,\myfrac{-4}{1},\,\myfrac{-5}{1}$ and $\myfrac{-6}{1}$ using layered chains of size 1, 0, 1 and 2, respectively. A single additional tetrahedron is required for each successive integer slope in both directions from this point on. Hence, a filling of slope $k\in\mathbb{Z}$ requires $\abs{k+4}$ tetrahedra.

    Now take a reflected version of Figures~\ref{fig:perm-slopes-a} and~\ref{fig:perm-slopes-b}. Set $\alpha$ to have slope $\myfrac{1}{0}$ again, but this time take $\beta$ to have slope $\myfrac{1}{1}$. From this we deduce that a filling of slope $\myfrac{1}{1}$ will require one tetrahedron, while a filling of slope $\myfrac{0}{1}$ will require no tetrahedra. Extrapolating from here, we see that a filling of slope $k\in\mathbb{Z}$ in a $\Hat{\mathcal{V}}$ boundary requires a layered chain consisting of $\abs{k}$ tetrahedra. 
\end{proof}

\subsection{Identifying a minimal triangulation} 
\begin{observation}
    The figure-8 knot $M_3(\,\myfrac{1}{1},\myfrac{2}{1})=K2_1$ is the only census knot in Appendix~\ref{app:census-list} that is obtained from $M_3$ by two integer fillings. A 3-tetrahedron non-minimal triangulation can be found by filling $\Hat{\mathbf{T}}_4$ with a 1-tetrahedron layered chain in $\cusp_1$ to realise the $\myfrac{1}{1}$ filling, and by folding the $\cusp_2$ boundary closed (i.e. filling with a 0-tetrahedron layered solid torus) to perform the $\myfrac{2}{1}$ filling. The unique minimal triangulation of $K2_1$ can be obtained from this triangulation by performing a 3--2 move.
\end{observation}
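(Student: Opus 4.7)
The plan is to split the observation into three parts: the uniqueness claim (only the figure-8 arises by two integer fillings), the construction of the 3-tetrahedron triangulation, and the reduction to the minimal triangulation via a 3--2 move. I would handle each separately.

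For the uniqueness claim, I would enumerate integer slope pairs allowed by Theorem~\ref{thm:MP-classification}, imposing $s=u=1$. For Type A the equation $tr-us=\pm 1$ becomes $tr\in\{0,2\}$; the value $tr=0$ forces a slope of $\myfrac{0}{1}\in\mathcal{X}$, while $tr=2$ yields $(r,t)\in\{(1,2),(2,1),(-1,-2),(-2,-1)\}$. Of these, the last two are ruled out because $-1,-2\in\mathcal{X}$, leaving only $M_3\big(\myfrac{1}{1},\myfrac{2}{1}\big)=K2_1$ up to the permutation symmetry of cusps $\cusp_1,\cusp_2$. For Types B and C, the primary slope $-2+\myfrac{1}{k}$ or $-3+\myfrac{1}{k}$ is integer only when $k=\pm 1$, and each such value either falls into $\mathcal{X}$ or the accompanying diophantine equation (e.g.\ $-3t-7u=\pm 1$ for the Type C case $k=-1$) admits no integer $t$ with $\myfrac{t}{1}\notin\mathcal{X}$. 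This exhausts all cases.

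For the triangulation count, I would start from $\Hat{\mathbf{T}}_4=\mathcal{T}(\Hat{\mathcal{V}}_1,\mathcal{R}_2;2)$ and apply the counting results directly. By Proposition~\ref{prop:chain-size}, the $\myfrac{1}{1}$ filling in the permissible boundary $\Hat{\mathcal{V}}_1=\{\myfrac{1}{0},\myfrac{1}{1};-\}$ uses $|1|=1$ tetrahedron. For the $\myfrac{2}{1}$ filling in the one-vertex boundary $\mathcal{R}_2=\{\myfrac{1}{0},\myfrac{1}{1},\myfrac{0}{1}\}$, the slope $\myfrac{2}{1}$ lies in the Farey-adjacent triple $\{\myfrac{1}{0},\myfrac{1}{1},\myfrac{2}{1}\}$, so the layered solid torus is degenerate and the Dehn filling is achieved by the closing fold alone, contributing $0$ tetrahedra. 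The total is $2+1+0=3$.

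Finally, I would verify in Regina that the resulting 3-tetrahedron triangulation admits a 3--2 move to the well-known 2-tetrahedron Thurston triangulation of the figure-8 complement, which is known to be its unique minimal triangulation. The most delicate step is the first: ensuring the enumeration exhausts Theorem~\ref{thm:MP-classification} without missing corner cases tied to $\mathcal{X}$ or the permutation symmetry of the cusps; the remaining steps are routine applications of the tetrahedron-counting propositions and a direct computation in Regina.
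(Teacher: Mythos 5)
Your proposal is correct, and for two of its three parts it takes essentially the route the paper (implicitly) takes: the tetrahedron count $2+1+0=3$ is exactly an application of Proposition~\ref{prop:chain-size} (the $\myfrac{1}{1}$ filling of the $\Hat{\mathcal{V}}$ boundary needs $\abs{1}=1$ tetrahedron) together with the Farey-path observation underlying Proposition~\ref{prop:LST-count} that $\myfrac{2}{1}$ lies one step from the point $\mathcal{R}_2=\left\{\myfrac{1}{0},\myfrac{1}{1},\myfrac{0}{1}\right\}$, so only the closing fold is needed; and the 3--2 move down to the unique 2-tetrahedron minimal triangulation of the figure-8 complement is, as in the paper, a direct computational check. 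Where you genuinely diverge is the uniqueness claim: the paper asserts uniqueness only within the finite list of Appendix~\ref{app:census-list}, which is settled by inspecting the $(r,s),(t,u)$ columns, whereas you enumerate integer slope pairs through Theorem~\ref{thm:MP-classification} and the exceptional set $\mathcal{X}$, obtaining the stronger statement that $M_3(\,\myfrac{1}{1},\myfrac{2}{1})$ is the only hyperbolic knot arising from two integer fillings of $M_3$ at all. Your case analysis is sound ($tr\in\{0,2\}$ for Type A; for Types B and C the primary slope $-2+\myfrac{1}{k}$ or $-3+\myfrac{1}{k}$ is integral only for $k=\pm1$, and the one surviving case, $k=-1$ of Type C, forces the secondary slope $-2\in\mathcal{X}$). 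The only thing to make explicit if you keep this route is that it relies on the completeness of Theorem~\ref{thm:MP-classification}, namely that every hyperbolic knot obtained by filling two cusps of $M_3$ is of Type A, B or C; this is what the paper's proof of that theorem establishes by eliminating the remaining $S^3$ fillings from Table 2 of Martelli--Petronio. The payoff of your route is generality (uniqueness beyond the census list); the payoff of the paper's is that it needs nothing beyond the table already computed.
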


\begin{theorem}
The five triangulations described in Section~\ref{sec:triangulations} are sufficient for generating a minimal triangulation of all 228 census knots $K=M_3(\,\myfrac{r}{s},\myfrac{t}{u})\neq K2_1$ listed in Appendix~\ref{app:census-list}, using the methods discussed in Section~\ref{sec:triangulated-dehn-fillings}. 
\end{theorem}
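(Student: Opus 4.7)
The plan is to reduce the statement to a finite, organized verification over the 228 knots in Appendix~\ref{app:census-list}, leveraging the family structure of Section~\ref{sec:families}. For a given census knot $K = M_3(\,\myfrac{r}{s}, \myfrac{t}{u})$, I would enumerate the five parent triangulations $\mathbf{T}_1, \mathbf{T}_2, \mathbf{T}_3, \Hat{\mathbf{T}}_4, \Hat{\mathbf{T}}_5$, and for each one whose $\cusp_1$ boundary is compatible with the slope $\myfrac{r}{s}$, compute the total $t_0 + t_1 + t_2$. Here $t_0$ is read off from $\mathcal{T}(\mathcal{S}_1, \mathcal{S}_2;\, t_0)$, while $t_1$ and $t_2$ are supplied directly by Proposition~\ref{prop:LST-count} (for layered solid tori, using $\norm{\,\cdot\,}$ plus the tabulated correction) and Proposition~\ref{prop:chain-size} (for layered chains in the permissible cusps of $\Hat{\mathbf{T}}_4, \Hat{\mathbf{T}}_5$). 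Taking the minimum over compatible parent triangulations yields an upper bound on the complexity of $K$; the theorem asks that this minimum coincides with the census complexity recorded by Regina.

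To make this tractable, I would avoid 228 separate calculations by working one family at a time. Within each of the 42 families the primary slope $\myfrac{r}{s}$ is fixed, which fixes both the compatible choice of parent triangulation and the value of $t_1$; the secondary slope $\myfrac{t_n}{u_n}$ varies linearly in $n$ by Fact~\ref{fact:TypeA-sec-slopes} and Lemma~\ref{lem:Type-BC-slopes}, so $\norm{\myfrac{t_n}{u_n}}$ becomes a concrete piecewise function of $n$, and hence $t_2$ likewise. The best choice of parent triangulation for each family is read off from Figure~\ref{fig:LST-Farey-paths} by comparing the four slope triples $\mathcal{P}, \mathcal{Q}, \mathcal{R}, \mathcal{R}'$ (and the permissible sets $\Hat{\mathcal{U}}, \Hat{\mathcal{V}}$) against $\myfrac{r}{s}$: whichever path through the Farey diagram is shortest after adding the relevant correction wins. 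For each family this reduces to a short check against the census complexities $C$ in the knot labels $KC_v$ listed in the family tables; the 8 orphan knots outside the families are verified individually, and $K2_1$ is handled by the Observation preceding the theorem.

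The main obstacle is not the upper-bound computation, which is mechanical, but the matching step: confirming that each constructed triangulation really does attain the census complexity $C$, and not merely an upper bound that exceeds it by one or two tetrahedra. This requires two ingredients. First, I would use SnapPy with the \texttt{L6a5(r,s)(t,u)} encoding and the \texttt{is\_isometric\_to} certification to identify each filled manifold with the correct census knot, as described in Section~\ref{sec:families}. Second, I would load the explicit filled triangulation into Regina and verify that its tetrahedron count is exactly the recorded minimal complexity, drawing on Regina's enumeration of all minimal triangulations of census manifolds~\cite{BurtonCensusComplete,Regina}. The risk point is that for some family the Farey-distance minimum from our five parent triangulations might overshoot the true census complexity by a tetrahedron, in which case either a 2--3 / 3--2 simplification recovers it (as for $K2_1$) or an additional parent triangulation would be needed; the core claim is that no such exception occurs in the 228 cases, and this is precisely what the enumeration confirms.
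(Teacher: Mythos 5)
Your proposal is correct and takes essentially the same approach as the paper: the paper's proof is likewise a finite verification that, for each knot in Appendix~\ref{app:census-list}, the assigned parent triangulation (chosen by the integrality/sign of the primary slope) together with the tetrahedron counts from Propositions~\ref{prop:LST-count} and~\ref{prop:chain-size} gives a total $t_0+t_1+t_2$ equal to the census complexity $C$ in the label $KC_v$, with the SnapPy identifications from Section~\ref{sec:families} and $K2_1$ handled by the preceding Observation. Your family-by-family bookkeeping and taking a minimum over compatible parent triangulations is only an organizational repackaging of the same tabulated check.
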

\begin{proof}
    Consider the following partition of the knots in Appendix~\ref{app:census-list} according to their primary slope:
    \begin{enumerate}
        \item Non-integer $\myfrac{r}{s}$ between $-2$ and $0$.
        \item Non-integer $\myfrac{r}{s}$ less than $-2$.
        \item Non-integer $\myfrac{r}{s}$ greater than $0$.
        \item Integer $\myfrac{r}{s}$ greater than $0$.
        \item Integer $\myfrac{r}{s}$ less than $0$.
    \end{enumerate}
    
    The knots described by 1, 2 and 3 are triangulated using $\mathbf{T}_1,\,\mathbf{T}_2$ and $\mathbf{T}_3$, respectively, with Dehn fillings realised by two layered solid tori. 
    The knots described by 4 and 5 are triangulated using $\Hat{\mathbf{T}}_4$ and $\Hat{\mathbf{T}}_5$, respectively, with Dehn fillings realised by a layered solid torus in the one-vertex boundary and a layered chain in the permissible boundary.

    Consider the example entry from Appendix~\ref{app:census-list} shown below. 
\[
\begin{array}{ccc|cc||cc|cc|rc|ccc|c} \renewcommand{\arraycolsep}{1pt}
    \text{Knot} & \text{SnapPea} & \text{Type} & (r,s) & (t,u) & \text{CF}\Bigl\vert\frac{r}{s}\Bigr\vert & \text{CF}\Bigl\vert\frac{t}{u}\Bigr\vert & \Bigl\Vert\frac{r}{s}\Bigr\Vert & \Bigl\Vert\frac{t}{u}\Bigr\Vert & \sim\sfrac{r}{s} & \text{Tri.} & t_0 & t_1 & t_2 & \Sigma \\[1.5mm] \hline\hline
    K4_4 & \text{m}118 & \text{A} & (-1,3) & (-5,2) & \flatCF{0,3} & \flatCF{2,2} & 3 & 4 & -0.3 & \mathbf{T}_2 & 2 & 2 & 0 & 4
\end{array}
\]

For each knot, we provide its pair of filling slopes $\myfrac{r}{s},\,\myfrac{t}{u}$, along with positive continued fractions for (the absolute value of) each, and their respective norms. An approximate value of $\myfrac{r}{s}$ is used to assign a triangulation according to the partitioning above, which in turn determines the value of $t_0$. 

Values for $t_1$ and $t_2$ are determined as in Propositions~\ref{prop:LST-count} and~\ref{prop:chain-size}, then the final column shows the sum $t_0+t_1+t_2$. For the triangulation to be minimal, this value must be equal to the complexity of the knot $C$, as indicated by its name $KC_v$. 

Apart from $K2_1$, every entry in Appendix~\ref{app:census-list} confirms the minimality of the chosen triangulation.
\end{proof}

\subsection{Families of minimal triangulations} 
Having established that we can construct at least one minimal triangulation for each knot, let us consider minimal triangulations of whole families. In the following, we refer to the 42 families of knots described in Section~\ref{sec:families} as the \emph{distinguished families of census knots}.

Recall that a knot determined by one slope in $(-\infty,-2)$ and one slope in $(-2,-1)$ has a minimal triangulation using $\mathbf{T}_2$. However, it is important that the slope in $(-\infty,-2)$ is assigned to the cusp whose associated boundary slopes are $\mathcal{Q}=\{\infty,-3,-2\}$. In particular, for the distinguished families whose primary slope is in $(-2,-1)$, we need to swap which filling is performed in which boundary of $\mathbf{T}_2$. This is possible since the symmetry of $M_3$ allows us to interchange $\cusp_1$ and $\cusp_2$ freely. 

The following array summarises which families with non-integer primary slopes have minimal triangulations using each of the triangulations of $M_3$. Note the three instances of $\mathbf{T}'_2$ indicating where the exchange of cusps $\cusp_1$ and $\cusp_2$ is necessary. 

\vspace{2mm}
\begin{adjustbox}{scale=0.9,center}
    \begin{tikzpicture}[x = 15mm, y = 10mm, node distance = 0mm, outer sep = 0mm]
% GRID LINES
\foreach \x in {-4,...,1}{
    \draw[lightgray] (\x,-4.3) -- (\x,1.3);}
\foreach \y in {-4,...,1}{
    \draw[lightgray] (-4.3,\y) -- (1.3,\y);}
% x labels
\node at (-4,-4.7) {$\myfrac{-1}{0}$};
\node at (-3,-4.7) {$\myfrac{-3}{1}$};
\node at (-2,-4.7) {$\myfrac{-2}{1}$};
\node at (-1,-4.7) {$\myfrac{-1}{1}$};
\node at (0,-4.7) {$\myfrac{0}{1}$};
\node at (1,-4.7) {$\myfrac{1}{0}$};
% y labels
\node at (-4.7,-4) {$\myfrac{-1}{0}$};
\node at (-4.7,-3) {$\myfrac{-3}{1}$};
\node at (-4.7,-2) {$\myfrac{-2}{1}$};
\node at (-4.7,-1) {$\myfrac{-1}{1}$};
\node at (-4.6,0) {$\myfrac{0}{1}$};
\node at (-4.6,1) {$\myfrac{1}{0}$};
% entries
\node at (0.5,0.5) {$\mathbf{T}_3$};
\node at (-3.5,-0.5) {$\mathbf{T}_2$};
\node at (-2.5,-0.5) {$\mathbf{T}_2$};
\node at (-1.5,-0.5) {$\mathbf{T}_1$};
\node at (-2.5,-1.5) {$\mathbf{T}_2$};
\node at (-0.5,-1.5) {$\mathbf{T}_1$};
\node at (-3.5,-2.5) {$\mathbf{T}_2$};
\node at (-1.5,-2.5) {$\mathbf{T}'_2$};
\node at (-0.5,-2.5) {$\mathbf{T}'_2$};
\node at (-2.5,-3.5) {$\mathbf{T}_2$};
\node at (-0.5,-3.5) {$\mathbf{T}'_2$};
% axis labels
\node at (-1.75,-5.3) {primary slopes};
\node[rotate=90] at (-5.3,-1.75) {secondary slopes};
\end{tikzpicture}

\end{adjustbox}
   
It is also important to note that when an integer secondary slope arises in a family whose primary slope is not an integer, the triangulation $\mathbf{T}_1,\,\mathbf{T}_2$ or $\mathbf{T}_3$ determined by the primary slope will \textit{not} yield a minimal triangulation for that knot. Fortunately, integer slopes only ever arise as secondary slopes for small values of $\abs{n}$.

\subsection{Beyond the census knots} 
Let $\{K(i)\}_{i\in\mathbb{Z}}$ represent the infinite extension of one of the 42 distinguished families of census knots. Recall that the breadth $x$ of the associated family is the index for which both $K(-x)$ and $K(x-1)$ are recognised as a 9-tetrahedron census knot. Let $c(K)$ denote the complexity of the knot $K$, and observe that 
\[ c\big(K(-x)\big)=c\big(K(x-1)\big)=9.\]

Each subsequent knot requires only an extension of the layered solid torus for the secondary slope. In these families, all secondary slopes beyond those that realise the 9-tetrahedron knots require only a single extra step in the Farey diagram. Hence, we have the following result.
\begin{theorem}\label{thm:bounded-complexity}
For each family $\{K(i)\}_i$ that extends a distinguished family of census knots with breadth $x$, the complexity of each knot, for $j>0$, satisfies 
\[
c\big(K(-x-j)\big)=c\big(K(x-1+j)\big)\leq 9+j.
\]
\end{theorem}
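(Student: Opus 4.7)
The plan is to bootstrap from the existing minimal triangulations of the two 9-tetrahedron endpoint knots $K(-x)$ and $K(x-1)$. Since every knot in a distinguished family shares the same primary filling slope $\myfrac{r}{s}$, we can reuse the same core triangulation $\mathbf{T}_i \in \{\mathbf{T}_1, \mathbf{T}_2, \mathbf{T}_3, \Hat{\mathbf{T}}_4, \Hat{\mathbf{T}}_5\}$ and the same layered solid torus or layered chain in $\cusp_1$ across the entire family; only the layered solid torus in $\cusp_2$, which realises the secondary slope $\myfrac{t_n}{u_n}$, varies with $n$.

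With this setup, the constructed triangulation of $K(n)$ has size $t_0 + t_1 + t_2(n)$, where $t_0$ and $t_1$ are independent of $n$ and $t_2(n)$ is the number of tetrahedra in the $\cusp_2$ layered solid torus. By Proposition~\ref{prop:LST-count}, $t_2(n)$ is determined up to an additive constant by $\norm{\,\myfrac{t_n}{u_n}}$. The heart of the proof is therefore the identity
\[
\norm{\,\myfrac{t_{n+1}}{u_{n+1}}} = \norm{\,\myfrac{t_n}{u_n}} + 1 \quad \text{for } n \geq x-1,
\]
together with the symmetric statement for $n \leq -x$. Granted this, each step beyond the breadth window contributes exactly one extra tetrahedron, and combined with the base cases $c(K(-x)) = c(K(x-1)) = 9$ this yields the upper bound $9+j$ for both $c(K(-x-j))$ and $c(K(x-1+j))$, accounting for the common value appearing on the right-hand side of the claimed equality.

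To prove the norm identity, I would work directly from the explicit formulas of Fact~\ref{fact:TypeA-sec-slopes} and Lemma~\ref{lem:Type-BC-slopes}. For Type A, $\myfrac{t_n}{u_n} = \frac{sn+s'}{rn+r'}$ converges to $\frac{s}{r}$ as $\abs{n} \to \infty$; using the convergent identity $rs' - sr' = \pm 1$, the Euclidean algorithm shows that the positive continued fraction of $\myfrac{t_n}{u_n}$ stabilises past the breadth window and gains exactly one additional symbol at each step. A cleaner geometric formulation is that the triples $\bigl\{\myfrac{t_n}{u_n}, \myfrac{t_{n+1}}{u_{n+1}}, \myfrac{r}{s}\bigr\}$ form the fan of Farey triangles around the vertex $\myfrac{r}{s}$, which in the dual tree is a bi-infinite geodesic; past the breadth window the unique Farey path from $\mathcal{S}_2$ to the triple containing $\myfrac{t_n}{u_n}$ runs along this geodesic, so incrementing $\abs{n}$ by $1$ adds exactly one edge to the path.

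The main obstacle will be carrying out the analogous analysis uniformly for Types B and C, where the formulas from Lemma~\ref{lem:Type-BC-slopes} involve the auxiliary parameter $k$ and are not of the immediate convergent form. For these, I expect to reduce to the Type A picture by first computing the continued fraction of $\myfrac{6k-1}{3k-2}$ (respectively $\myfrac{6k-1}{2k-1}$) and verifying that the $n$-dependence enters as an extra final symbol of the form $\CF{\ldots, n}$ once $\abs{n}$ is past the breadth. The defining property of the breadth $x$, namely that both $K(-x)$ and $K(x-1)$ are 9-tetrahedron census knots, is precisely what ensures that no exceptional small-$n$ behaviour interferes with this asymptotic growth and also pins down the constant $9$ in the stated bound.
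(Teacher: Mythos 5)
Your proposal follows essentially the same route as the paper: keep the core triangulation and the primary-slope filling fixed, observe that only the secondary-slope layered solid torus varies within a family, and note that beyond the breadth window each increment of the index adds exactly one step in the Farey diagram (hence one tetrahedron) on top of the two 9-tetrahedron base cases $c(K(-x))=c(K(x-1))=9$. The paper's justification is only the short paragraph preceding the theorem asserting this one-extra-step property, so your sketch of how to actually verify it via the Farey fan of neighbours of the primary slope (strictly, of its reciprocal $s/r$ under the paper's conventions, since the Type A condition $tr-us=\pm1$ makes $t/u$ a Farey neighbour of $s/r$), including the Type B and C cases, is if anything more detailed than the published argument.
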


\begin{remark}
    The difference between the triangulations for $K(i)$ and $K(i+j)$ is essentially the same as the \emph{minimal extension} described by Jaco, Rubinstein and Tillmann in Corollary 6 of~\cite{JRT09}. 
\end{remark}

\begin{conjecture}
The inequality in Theorem~\ref{thm:bounded-complexity} can be upgraded to equality for all 42 families of knots in this paper.
\end{conjecture}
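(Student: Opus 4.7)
The plan is to attack the conjecture by combining a computational extension of the base case with a structural induction along each family, and to identify where existing lower-bound machinery falls short.

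\textbf{Extending the base case.} First, I would push Regina's exhaustive triangulation enumeration past the $9$-tetrahedron cut-off that defined the census. For each of the $42$ distinguished families, certifying minimality of $K(-x-j)$ and $K(x-1+j)$ for $j=1,\ldots,J$ would confirm the conjecture for $J$ further layers at each end; current enumeration comfortably handles triangulations up to roughly $12$--$13$ tetrahedra, which should give a solid empirical base on which to perform induction in $j$.

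\textbf{Inductive step along a family.} Second, I would try to prove that once the conjecture holds for $K(i)$, it holds for $K(i+1)$. The key input is the observation in the remark: the upper-bound triangulation for $K(i+1)$ is obtained from that for $K(i)$ by inserting a single layering tetrahedron into the layered solid torus realising the secondary slope, exactly matching the \emph{minimal extension} of Jaco--Rubinstein--Tillmann~\cite{JRT09}. The inductive step then reduces to the claim that any minimal triangulation of $K(i+1)$ contains a recognisable ``outermost-twist'' tetrahedron whose removal (possibly after a $3$--$2$ move) yields a triangulation of $K(i)$. Locating such a tetrahedron canonically — perhaps by normal-surface arguments that detect an essential annulus tracking the twist region inherited from the parent $M_3$ decomposition — is the technical heart of the proof.

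\textbf{Main obstacle and a partial route.} The principal difficulty is the lack of sharp lower bounds on triangulation complexity for infinite families of hyperbolic \emph{knot complements}. The Jaco--Rubinstein--Spreer--Tillmann machinery relies on $H_1$-torsion, which a knot complement in $S^3$ does not possess, and Turaev--Viro or quantum invariants typically miss the correct additive constant. I expect the hardest step to be certifying the outermost-twist tetrahedron inside an arbitrary minimal triangulation, since this essentially amounts to a lower bound finely tuned to twist families. A viable workaround is to pass to a finite cyclic cover of $K(i)$ branched along the knot, in which the homology grows with $i$ and becomes amenable to JRST-style inequalities, and then pull the resulting bound back using the relationship between the complexity of a manifold and that of its covers. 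Short of that, combining the extended Regina verification with a version of the induction that is conditional on the removal claim would already reduce the $42$-family conjecture to a single concrete lemma about minimal triangulations of Dehn fillings of the magic manifold, which itself could be tested computationally for many values of $j$ before being attacked in general.
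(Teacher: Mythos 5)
The statement you are addressing is stated in the paper as a \emph{conjecture}: the paper offers no proof, only the upper bound of Theorem~\ref{thm:bounded-complexity} (which follows from the explicit layered-solid-torus extensions) together with the evidence of Appendix~\ref{app:census-list}. Your text is likewise not a proof but a research programme, and its central step is exactly the part that remains open. The inductive claim --- that every minimal triangulation of $K(i+1)$ contains a canonically locatable ``outermost-twist'' tetrahedron whose removal (after a 3--2 move) yields a triangulation of $K(i)$ --- is equivalent to the sharp lower bound the conjecture needs; asserting that normal-surface arguments ``detecting an essential annulus'' will locate it is a hope, not an argument, and nothing in the known machinery (JRT/JRST bounds, which as you note require torsion in $H_1$ and hence fail for knot complements in $S^3$) currently delivers it. So the proposal correctly identifies the obstacle but does not overcome it.

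Two of your suggested workarounds also do not work as stated. Passing to a cyclic branched cover only yields inequalities of the form $c(\text{cover})\leq n\cdot c(\text{base})$ coming from lifting a triangulation, so a JRST-style lower bound on the cover pulls back only to $c(\text{base})\geq c(\text{cover})/n$, which loses precisely the additive constant the conjecture is about; there is no known mechanism to ``pull back'' a sharp bound through a branched covering. And the computational base extension overstates what is feasible: the exhaustive cusped orientable census is complete only to 9 tetrahedra, and extending exhaustive enumeration to 12--13 ideal tetrahedra is far beyond current reach, so certifying minimality of $K(-x-j)$ and $K(x-1+j)$ for several further $j$ cannot simply be delegated to Regina. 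What survives of your plan is the (correct and useful) observation that the conjecture reduces to a single removal lemma about minimal triangulations of fillings of $M_3$, matching the minimal-extension picture of Jaco--Rubinstein--Tillmann; but until that lemma, or some other lower-bound technique applicable to knot complements, is actually proved, the conjecture remains open and your outline does not close it.
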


\newpage
\appendix
\section{Families of census knots of breadth 4, 3, 2}\label{app:up-to-8}
\begin{adjustbox}{scale=0.9,center}
    \renewcommand{\arraystretch}{1.4}
    $\begin{array}{c|c||cccc|cccc||c}
        (r,s) & (t_n,u_n) & -4 & -3 & -2 & -1 & 0 & 1 & 2 & 3 & M(r/s) \\ \hline 
        (-5,1) & (-1-n,4+5 n) &  K9_{10}   &  K8_{8}   &  K7_{8}   & T_{(2,3)} &  K5_{10}   &  K7_{9}   &  K8_{11}   &  K9_{11}   & \text{m}292 \\ 
        (-4,3) & (-2-3 n,3+4 n) &  K9_{12}   &  K8_{12}   &  K7_{12}   & T_{(2,5)} &  K5_{7}   &  K7_{13}   &  K8_{13}   &  K9_{13}   & \text{m}328 \\ 
        (-5,3) & (-2-3 n,3+5 n) &  K9_{14}   &  K8_{14}   &  K7_{14}   &  K4_{3}   &  K5_{6}   &  K7_{16}   &  K8_{15}   &  K9_{15}   & \text{m}329 \\ 
        (-5,3)_\text{B} & (-12-17 n,5+7 n) &  K9_{16}   &  K8_{16}   &  K7_{15}   &  K3_{1}   &  K5_{16}   &  K7_{17}   &  K8_{17}   &  K9_{17}   & \text{m}329 \\ 
        (3,1) & (1+n,2+3 n) &  K9_{28}   &  K8_{24}   &  K7_{19}   & T_{(2,3)} &  K5_{8}   &  K7_{26}   &  K8_{28}   &  K9_{29}   & \text{s}443 \\ 
        (-7,2)_\text{C} & (-8-13 n,3+5 n) & K9_{32}   &  K8_{30}   &  K7_{29}   &  K4_{4}   &  K5_{17}   &  K7_{31}   &  K8_{32}   &  K9_{33}   & \text{m}366 \\
        (-3,4) & (-3-4 n,2+3 n) &  K9_{52}   &  K8_{38}   &  K7_{21}   & T_{(2,9)} &  K5_{1}   &  K7_{33}   &  K8_{41}   &  K9_{54}   & \text{s}549 \\ 
        (-3,5) & (-3-5 n,2+3 n) &  K9_{73}   &  K8_{52}   &  K7_{32}   & T_{(3,7)} &  K5_{4}   &  K7_{38}   &  K8_{56}   &  K9_{75}   & \text{s}638 \\ \hline
        (-7,3) & (-2-3 n,5+7 n) &&  K9_{23}   &  K8_{21}   &  K4_{3}   &  K6_{16}   &  K8_{25}   &  K9_{25}   && \text{m}357 \\ 
        (-7,3)_\text{B} & (-12-19 n,7+11 n) &&  K9_{24}   &  K8_{23}   &  K5_{15}   &  K6_{15}   &  K8_{26}   &  K9_{27}   && \text{m}357 \\ 
        (-7,2) & (-1-2 n,4+7 n) &&  K9_{30}   &  K8_{27}   &  K5_{14}   &  K6_{11}   &  K8_{29}   &  K9_{31}   && \text{m}366 \\ 
        (-8,3)_\text{C} & (-7-17 n,2+5 n) &&  K9_{41}   &  K8_{34}   &  K5_{17}   &  K6_{18}   &  K8_{36}   &  K9_{43}   && \text{m}388 \\ 
        (-8,3) & (-2-3 n,5+8 n) &&  K9_{44}   &  K8_{35}   &  K5_{18}   &  K6_{21}   &  K8_{37}   &  K9_{45}   && \text{m}388 \\ 
        (1,3) & (2+3 n,1+n) &&  K9_{53}   &  K7_{25}   & T_{(0,1)} &  K5_{9}   &  K8_{39}   &  K9_{57}   && \text{s}578 \\ 
        (3,2) & (1+2 n,2+3 n) &&  K9_{59}   &  K8_{44}   &  K4_{1}   &  K6_{10}   &  K8_{47}   &  K9_{63}   && \text{s}568 \\ 
        (-1,5) & (-4-5 n,1+n) &&  K9_{60}   &  K7_{20}   & T_{(0,1)} &  K5_{1}   &  K8_{40}   &  K9_{71}   && \text{v}1285 \\ 
        (2,3) & (2+3 n,1+2 n) &&  K9_{72}   &  K8_{48}   &  K4_{2}   &  K5_{13}   &  K8_{51}   &  K9_{74}   && \text{s}621 \\ 
        (-3,7) & (-5-7 n,2+3 n) &&  K9_{86}   &  K8_{43}   & T_{(3,11)} &  K6_{6}   &  K8_{58}   &  K9_{89}   && \text{v}1391 \\ 
        (-2,7) & (-4-7 n,1+2 n) &&  K9_{88}   &  K8_{50}   & S_{-3,-2} &  K5_{4}   &  K8_{57}   &  K9_{90}   && \text{v}1411 \\ 
        (-3,8) & (-5-8 n,2+3 n) &&  K9_{91}   &  K8_{55}   & S_{-3,3} &  K6_{7}   &  K8_{63}   &  K9_{92}   && \text{v}1442 \\ \hline
        (-6,1) & (-1-n,5+6 n) &&&  K9_{18}   & T_{(2,3)} &  K7_{20}   &  K9_{26}   &&& \text{s}441 \\ 
        (-5,4) & (-3-4 n,4+5 n) &&&  K9_{34}   & T_{(2,7)} &  K7_{21}   &  K9_{39}   &&& \text{s}506 \\ 
        (4,1) & (1+n,3+4 n) &&&  K9_{37}   & T_{(2,3)} &  K7_{25}   &  K9_{50}   &&& \text{v}1060 \\ 
        (-4,5) & (-4-5 n,3+4 n) &&&  K9_{40}   & T_{(2,11)} &  K7_{12}   &  K9_{56}   &&& \text{v}1204 \\ 
        (-7,4) & (-3-4 n,5+7 n) &&&  K9_{42}   &  K5_{5}   &  K7_{32}   &  K9_{48}   &&& \text{s}503 \\ 
        (-7,4)_\text{B} & (-16-23 n,7+10 n) &&&  K9_{47}   &  K5_{15}   &  K7_{35}   &  K9_{49}   &&& \text{s}503 \\ 
        (-4,7) & (-5-7 n,3+4 n) &&&  K9_{55}   & T_{(3,10)} &  K7_{14}   &  K9_{79}   &&& \text{v}1372 \\ 
        (-7,5) & (-3-5 n,4+7 n) &&&  K9_{62}   &  K6_{13}   &  K7_{33}   &  K9_{64}   &&& \text{s}577 \\ 
        (-8,5) & (-3-5 n,5+8 n) &&&  K9_{65}   &  K6_{12}   &  K7_{38}   &  K9_{70}   &&& \text{s}576 \\ 
        (-10,3)_\text{C} & (-11-19 n,4+7 n) &&&  K9_{67}   &  K6_{18}   &  K7_{36}   &  K9_{68}   &&& \text{s}579 \\ 
        (-5,7) & (-4-7 n,3+5 n) &&&  K9_{76}   &  K6_{3}   &  K7_{13}   &  K9_{84}   &&& \text{v}1351 \\ 
        (-5,8) & (-5-8 n,3+5 n) &&&  K9_{82}   &  K6_{4}   &  K7_{16}   &  K9_{87}   &&& \text{v}1415 \\ 
    \end{array}$
\end{adjustbox}
\section{Suggested diagrams for families of knots}\label{app:knot-diagrams}

\noindent\textbf{Type A knots.} 
We visualise the diagrams of Type A knots using the Hopf model from Figure~\ref{fig:MM-diagrams} (right). A continued fraction for the primary slope, say $\myfrac{r}{s}=\CF{a_0,a_1,\cdots,a_{m-1},a_m}$, gives instructions for a sequence of Dehn twists about annuli parallel to each of $\mu_1$ and $\mu_2$. 

Let $\myfrac{r'}{s'}=\CF{a_0,a_1,\cdots,a_{m-1}}$ and set $a=\abs{r}+\abs{s}$ and $b=\abs{r'}+\abs{s'}$. Then the diagram for this family of knots can be described as a braid on $a$ strands with $b-a\cdot n$ (signed) overstrands and a rational tangle inserted. If $\myfrac{r}{s}>0$, the tangle is a clasp, and if $\myfrac{r}{s}<0$ the tangle is a full twist.  
In the example on the right, $a=7$ and $b=3$, and the clasped tangle is used.

\begin{center}
    \includegraphics[width=0.4\textwidth]{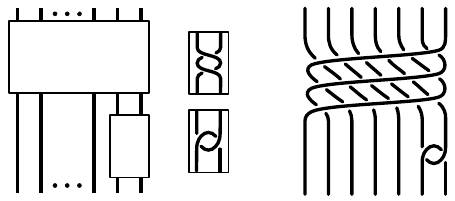}
\end{center}
\vspace{-2mm}
\noindent\textbf{Type B and C knots.} 
These diagrams are found by manipulating the $S^3$ surgery diagrams implied by Theorem~\ref{thm:MP-classification}, with the help of the KLO software~\cite{KirbyCalculator}. The last diagram in each case is two steps away from a final knot diagram, requiring twists about the purple component then twists about the green one. 

\vspace{2mm}
\noindent\textbf{Type B}
\begin{center}
\includegraphics[width=0.8\textwidth]{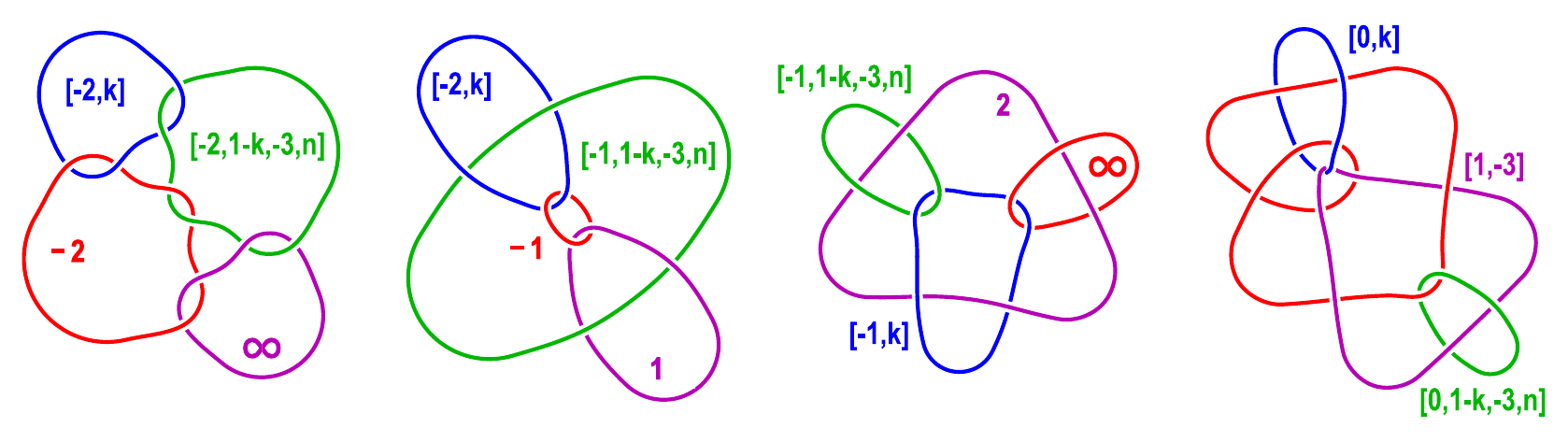}

\includegraphics[width=0.8\textwidth]{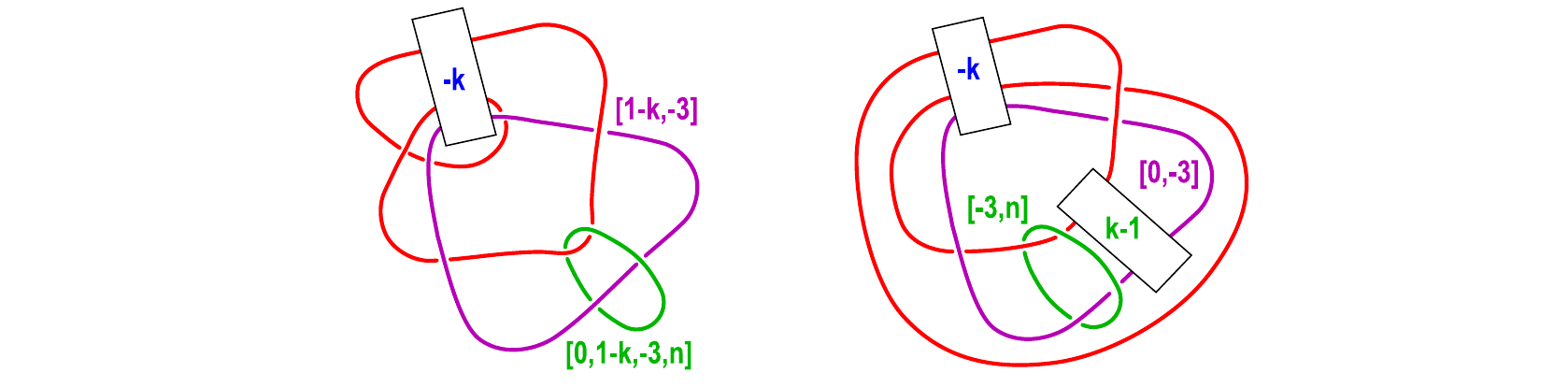}
\end{center}
\vspace{-2mm}
\textbf{Type C}
\begin{center}
\includegraphics[width=0.8\textwidth]{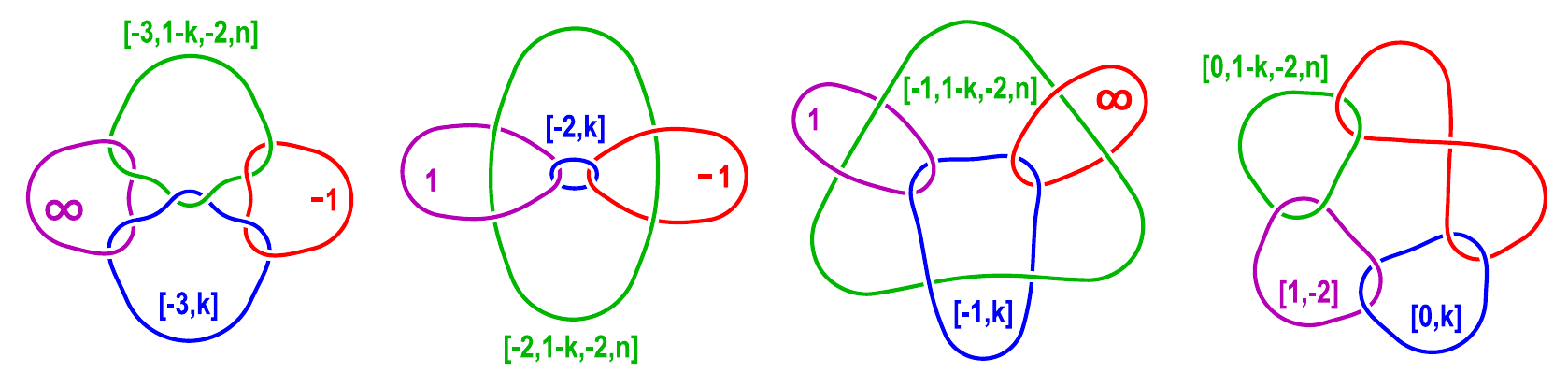}

\includegraphics[width=0.8\textwidth]{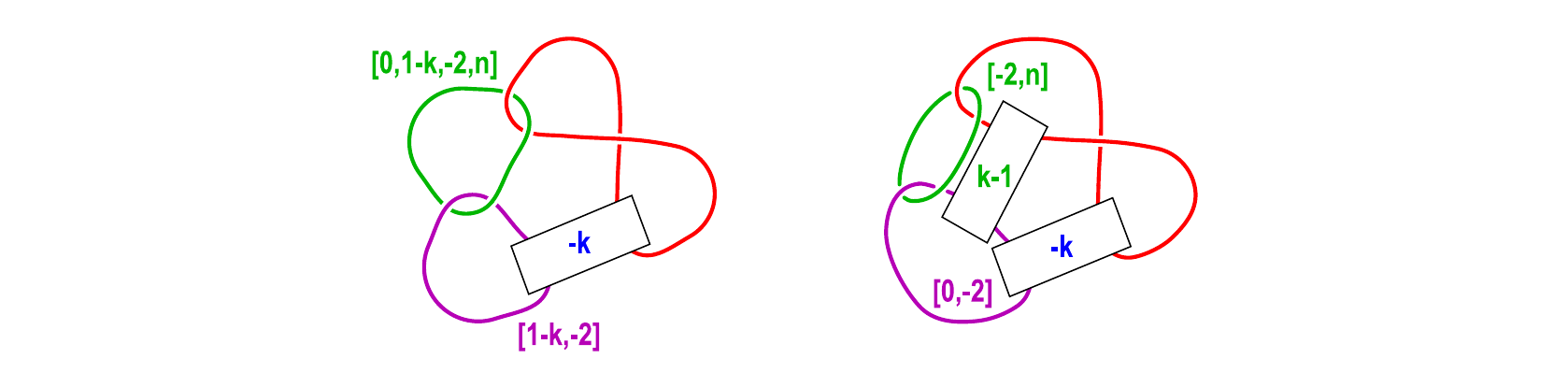}
\end{center}

\section{Census knots as fillings of the magic manifold}\label{app:census-list}

\begin{adjustbox}{center,width=\linewidth}
\renewcommand{\arraycolsep}{3pt}
    $\begin{array}{ccc|cc||cc|cc|rc|ccc|c} 
    \text{Knot} & \text{SnapPea} & \text{Type} & (r,s) & (t,u) & \text{CF}\Bigl\vert\frac{r}{s}\Bigr\vert & \text{CF}\Bigl\vert\frac{t}{u}\Bigr\vert & \Bigl\Vert\frac{r}{s}\Bigr\Vert & \Bigl\Vert\frac{t}{u}\Bigr\Vert & \sim\sfrac{r}{s} & \text{Tri.} & t_0 & t_1 & t_2 & \Sigma \\[1.5mm] \hline\hline  
        K2_{1} & \text{m}004 & \text{A} & (1,1) & (2,1) & \flatCF{1} & \flatCF{2} & 1 & 2 & 1.0 & \Hat{\mathbf{T}}_4 & 1 & 0 & 2 & 3 \\ \midrule
        K3_{1} & \text{m}016 & \text{A} & (-1,2) & (-3,2) & \flatCF{0,2} & \flatCF{1,2} & 2 & 3 & -0.5 & \mathbf{T}_1 & 1 & 0 & 2 & 3 \\ 
        K3_{2} & \text{m}015 & \text{A} & (1,1) & (1,2) & \flatCF{1} & \flatCF{0,2} & 1 & 2 & 1.0 & \Hat{\mathbf{T}}_4 & 1 & 0 & 2 & 3 \\ \midrule
        K4_{1} & \text{m}032 & \text{A} & (1,1) & (3,2) & \flatCF{1} & \flatCF{1,2} & 1 & 3 & 1.0 & \Hat{\mathbf{T}}_4 & 1 & 1 & 2 & 4 \\ 
        K4_{2} & \text{m}053 & \text{A} & (1,1) & (2,3) & \flatCF{1} & \flatCF{0,1,2} & 1 & 3 & 1.0 & \Hat{\mathbf{T}}_4 & 1 & 1 & 2 & 4 \\ 
        K4_{3} & \text{m}082 & \text{A} & (-1,2) & (-5,3) & \flatCF{0,2} & \flatCF{1,1,2} & 2 & 4 & -0.5 & \mathbf{T}_1 & 1 & 1 & 2 & 4 \\ 
        K4_{4} & \text{m}118 & \text{A} & (-5,2) & (-1,3) & \flatCF{2,2} & \flatCF{0,3} & 4 & 3 & -2.5 & \mathbf{T}_2 & 0 & 2 & 2 & 4 \\ \midrule
        K5_{1} & \text{m}071 & \text{A} & (-4,1) & (-1,5) & \flatCF{4} & \flatCF{0,5} & 4 & 5 & -4.0 & \Hat{\mathbf{T}}_5 & 0 & 3 & 2 & 5 \\ 
        K5_{2} & \text{m}074 & \text{A} & (1,1) & (4,3) & \flatCF{1} & \flatCF{1,3} & 1 & 4 & 1.0 & \Hat{\mathbf{T}}_4 & 1 & 2 & 2 & 5 \\ 
        K5_{3} & \text{m}094 & \text{A} & (1,1) & (3,4) & \flatCF{1} & \flatCF{0,1,3} & 1 & 4 & 1.0 & \Hat{\mathbf{T}}_4 & 1 & 2 & 2 & 5 \\ 
        K5_{4} & \text{m}103 & \text{A} & (-4,1) & (-2,7) & \flatCF{4} & \flatCF{0,3,2} & 4 & 5 & -4.0 & \Hat{\mathbf{T}}_5 & 0 & 3 & 2 & 5 \\ 
        K5_{5} & \text{m}144 & \text{A} & (-1,2) & (-7,4) & \flatCF{0,2} & \flatCF{1,1,3} & 2 & 5 & -0.5 & \mathbf{T}_1 & 1 & 2 & 2 & 5 \\ 
        K5_{6} & \text{m}194 & \text{A} & (-2,3) & (-5,3) & \flatCF{0,1,2} & \flatCF{1,1,2} & 3 & 4 & -0.7 & \mathbf{T}_1 & 2 & 1 & 2 & 5 \\ 
        K5_{7} & \text{m}198 & \text{A} & (-2,3) & (-4,3) & \flatCF{0,1,2} & \flatCF{1,3} & 3 & 4 & -0.7 & \mathbf{T}_1 & 2 & 1 & 2 & 5 \\ 
        K5_{8} & \text{m}199 & \text{A} & (3,1) & (1,2) & \flatCF{3} & \flatCF{0,2} & 3 & 2 & 3.0 & \Hat{\mathbf{T}}_4 & 3 & 0 & 2 & 5 \\ 
        K5_{9} & \text{m}201 & \text{A} & (2,1) & (1,3) & \flatCF{2} & \flatCF{0,3} & 2 & 3 & 2.0 & \Hat{\mathbf{T}}_4 & 2 & 1 & 2 & 5 \\ 
        K5_{10} & \text{m}211 & \text{A} & (-5,1) & (-1,4) & \flatCF{5} & \flatCF{0,4} & 5 & 4 & -5.0 & \Hat{\mathbf{T}}_5 & 1 & 2 & 2 & 5 \\ 
        K5_{13} & \text{m}224 & \text{A} & (2,1) & (2,3) & \flatCF{2} & \flatCF{0,2,2} & 2 & 3 & 2.0 & \Hat{\mathbf{T}}_4 & 2 & 1 & 2 & 5 \\ 
        K5_{14} & \text{m}239 & \text{A} & (-7,2) & (-1,3) & \flatCF{3,2} & \flatCF{0,3} & 5 & 3 & -3.5 & \mathbf{T}_2 & 1 & 2 & 2 & 5 \\ 
        K5_{15} & \text{m}240 & \text{B} & (-7,3) & (-7,4) & \flatCF{2,3} & \flatCF{1,1,3} & 5 & 5 & -2.3 & \mathbf{T}_2 & 1 & 2 & 2 & 5 \\ 
        K5_{16} & \text{m}270 & \text{B} & (-12,5) & (-5,3) & \flatCF{2,2,2} & \flatCF{1,1,2} & 6 & 4 & -2.4 & \mathbf{T}_2 & 2 & 1 & 2 & 5 \\ 
        K5_{17} & \text{m}276 & \text{C} & (-7,2) & (-8,3) & \flatCF{3,2} & \flatCF{2,1,2} & 5 & 5 & -3.5 & \mathbf{T}_2 & 2 & 1 & 2 & 5 \\ 
        K5_{18} & \text{m}281 & \text{A} & (-8,3) & (-1,3) & \flatCF{2,1,2} & \flatCF{0,3} & 5 & 3 & -2.7 & \mathbf{T}_2 & 1 & 2 & 2 & 5 \\ \midrule
        K6_{1} & \text{s}016 & \text{A} & (1,1) & (5,4) & \flatCF{1} & \flatCF{1,4} & 1 & 5 & 1.0 & \Hat{\mathbf{T}}_4 & 1 & 3 & 2 & 6 \\ 
        K6_{2} & \text{s}023 & \text{A} & (1,1) & (4,5) & \flatCF{1} & \flatCF{0,1,4} & 1 & 5 & 1.0 & \Hat{\mathbf{T}}_4 & 1 & 3 & 2 & 6 \\ 
        K6_{3} & \text{s}042 & \text{A} & (-4,1) & (-2,9) & \flatCF{4} & \flatCF{0,4,2} & 4 & 6 & -4.0 & \Hat{\mathbf{T}}_5 & 0 & 4 & 2 & 6 \\ 
        K6_{4} & \text{s}068 & \text{A} & (-4,1) & (-3,11) & \flatCF{4} & \flatCF{0,3,1,2} & 4 & 6 & -4.0 & \Hat{\mathbf{T}}_5 & 0 & 4 & 2 & 6 \\ 
        K6_{5} & \text{s}086 & \text{A} & (-1,2) & (-9,5) & \flatCF{0,2} & \flatCF{1,1,4} & 2 & 6 & -0.5 & \mathbf{T}_1 & 1 & 3 & 2 & 6 \\ 
        K6_{6} & \text{s}104 & \text{A} & (-5,2) & (-3,7) & \flatCF{2,2} & \flatCF{0,2,3} & 4 & 5 & -2.5 & \mathbf{T}_2 & 0 & 4 & 2 & 6 \\ 
        K6_{7} & \text{s}114 & \text{A} & (-5,2) & (-3,8) & \flatCF{2,2} & \flatCF{0,2,1,2} & 4 & 5 & -2.5 & \mathbf{T}_2 & 0 & 4 & 2 & 6 \\ 
        K6_{8} & \text{s}188 & \text{A} & (2,1) & (2,5) & \flatCF{2} & \flatCF{0,2,2} & 2 & 4 & 2.0 & \Hat{\mathbf{T}}_4 & 2 & 2 & 2 & 6 \\ 
        K6_{9} & \text{s}194 & \text{A} & (2,1) & (3,5) & \flatCF{2} & \flatCF{0,1,1,2} & 2 & 4 & 2.0 & \Hat{\mathbf{T}}_4 & 2 & 2 & 2 & 6 \\ 
        K6_{10} & \text{s}239 & \text{A} & (1,2) & (3,2) & \flatCF{0,2} & \flatCF{1,2} & 2 & 3 & 0.5 & \mathbf{T}_3 & 0 & 1 & 5 & 6 \\ 
        K6_{11} & \text{s}294 & \text{A} & (-7,2) & (-1,4) & \flatCF{3,2} & \flatCF{0,4} & 5 & 4 & -3.5 & \mathbf{T}_2 & 1 & 3 & 2 & 6 \\ 
        K6_{12} & \text{s}301 & \text{A} & (-2,3) & (-8,5) & \flatCF{0,1,2} & \flatCF{1,1,1,2} & 3 & 5 & -0.7 & \mathbf{T}_1 & 2 & 2 & 2 & 6 \\ 
        K6_{13} & \text{s}308 & \text{A} & (-2,3) & (-7,5) & \flatCF{0,1,2} & \flatCF{1,2,2} & 3 & 5 & -0.7 & \mathbf{T}_1 & 2 & 2 & 2 & 6 \\ 
        K6_{14} & \text{s}336 & \text{A} & (-10,3) & (-1,3) & \flatCF{3,3} & \flatCF{0,3} & 6 & 3 & -3.3 & \mathbf{T}_2 & 2 & 2 & 2 & 6 \\ 
        K6_{15} & \text{s}344 & \text{B} & (-7,3) & (-12,7) & \flatCF{2,3} & \flatCF{1,1,2,2} & 5 & 6 & -2.3 & \mathbf{T}_2 & 1 & 3 & 2 & 6 \\ 
        K6_{16} & \text{s}346 & \text{A} & (-7,3) & (-2,5) & \flatCF{2,3} & \flatCF{0,2,2} & 5 & 4 & -2.3 & \mathbf{T}_2 & 1 & 3 & 2 & 6 \\ 
        K6_{17} & \text{s}367 & \text{A} & (-11,4) & (-1,3) & \flatCF{2,1,3} & \flatCF{0,3} & 6 & 3 & -2.8 & \mathbf{T}_2 & 2 & 2 & 2 & 6 \\ 
        K6_{18} & \text{s}369 & \text{C} & (-10,3) & (-8,3) & \flatCF{3,3} & \flatCF{2,1,2} & 6 & 5 & -3.3 & \mathbf{T}_2 & 3 & 1 & 2 & 6 \\ 
        K6_{21} & \text{s}407 & \text{A} & (-8,3) & (-2,5) & \flatCF{2,1,2} & \flatCF{0,2,2} & 5 & 4 & -2.7 & \mathbf{T}_2 & 1 & 3 & 2 & 6 \\ 
        \end{array}$
\end{adjustbox}

\begin{adjustbox}{center,width=\linewidth}
\renewcommand{\arraycolsep}{3pt}
    $\begin{array}{ccc|cc||cc|cc|rc|ccc|c} 
    \text{Knot} & \text{SnapPea} & \text{Type} & (r,s) & (t,u) & \text{CF}\Bigl\vert\frac{r}{s}\Bigr\vert & \text{CF}\Bigl\vert\frac{t}{u}\Bigr\vert & \Bigl\Vert\frac{r}{s}\Bigr\Vert & \Bigl\Vert\frac{t}{u}\Bigr\Vert & \sim\sfrac{r}{s} & \text{Tri.} & t_0 & t_1 & t_2 & \Sigma \\[1.5mm] \hline\hline   
        K7_{1} & \text{v}0016 & \text{A} & (1,1) & (6,5) & \flatCF{1} & \flatCF{1,5} & 1 & 6 & 1.0 & \Hat{\mathbf{T}}_4 & 1 & 4 & 2 & 7 \\ 
        K7_{2} & \text{v}0025 & \text{A} & (1,1) & (5,6) & \flatCF{1} & \flatCF{0,1,5} & 1 & 6 & 1.0 & \Hat{\mathbf{T}}_4 & 1 & 4 & 2 & 7 \\ 
        K7_{3} & \text{v}0082 & \text{A} & (-4,1) & (-3,13) & \flatCF{4} & \flatCF{0,4,3} & 4 & 7 & -4.0 & \Hat{\mathbf{T}}_5 & 0 & 5 & 2 & 7 \\ 
        K7_{4} & \text{v}0114 & \text{A} & (-4,1) & (-4,15) & \flatCF{4} & \flatCF{0,3,1,3} & 4 & 7 & -4.0 & \Hat{\mathbf{T}}_5 & 0 & 5 & 2 & 7 \\ 
        K7_{5} & \text{v}0165 & \text{A} & (-1,2) & (-11,6) & \flatCF{0,2} & \flatCF{1,1,5} & 2 & 7 & -0.5 & \mathbf{T}_1 & 1 & 4 & 2 & 7 \\ 
        K7_{6} & \text{v}0220 & \text{A} & (-5,2) & (-5,12) & \flatCF{2,2} & \flatCF{0,2,2,2} & 4 & 6 & -2.5 & \mathbf{T}_2 & 0 & 5 & 2 & 7 \\ 
        K7_{7} & \text{v}0223 & \text{A} & (-5,2) & (-5,13) & \flatCF{2,2} & \flatCF{0,2,1,1,2} & 4 & 6 & -2.5 & \mathbf{T}_2 & 0 & 5 & 2 & 7 \\ 
        K7_{8} & \text{v}0249 & \text{A} & (-5,1) & (-1,6) & \flatCF{5} & \flatCF{0,6} & 5 & 6 & -5.0 & \Hat{\mathbf{T}}_5 & 1 & 4 & 2 & 7 \\ 
        K7_{9} & \text{v}0319 & \text{A} & (-5,1) & (-2,9) & \flatCF{5} & \flatCF{0,4,2} & 5 & 6 & -5.0 & \Hat{\mathbf{T}}_5 & 1 & 4 & 2 & 7 \\ 
        K7_{10} & \text{v}0321 & \text{A} & (2,1) & (3,7) & \flatCF{2} & \flatCF{0,2,3} & 2 & 5 & 2.0 & \Hat{\mathbf{T}}_4 & 2 & 3 & 2 & 7 \\ 
        K7_{11} & \text{v}0329 & \text{A} & (2,1) & (4,7) & \flatCF{2} & \flatCF{0,1,1,3} & 2 & 5 & 2.0 & \Hat{\mathbf{T}}_4 & 2 & 3 & 2 & 7 \\ 
        K7_{12} & \text{v}0330 & \text{A} & (-4,3) & (-4,5) & \flatCF{1,3} & \flatCF{0,1,4} & 4 & 5 & -1.3 & \mathbf{T}_1 & 1 & 4 & 2 & 7 \\ 
        K7_{13} & \text{v}0398 & \text{A} & (-4,3) & (-5,7) & \flatCF{1,3} & \flatCF{0,1,2,2} & 4 & 5 & -1.3 & \mathbf{T}_1 & 1 & 4 & 2 & 7 \\ 
        K7_{14} & \text{v}0407 & \text{A} & (-5,3) & (-4,7) & \flatCF{1,1,2} & \flatCF{0,1,1,3} & 4 & 5 & -1.7 & \mathbf{T}_1 & 1 & 4 & 2 & 7 \\ 
        K7_{15} & \text{v}0424 & \text{B} & (-22,9) & (-5,3) & \flatCF{2,2,4} & \flatCF{1,1,2} & 8 & 4 & -2.4 & \mathbf{T}_2 & 4 & 1 & 2 & 7 \\ 
        K7_{16} & \text{v}0434 & \text{A} & (-5,3) & (-5,8) & \flatCF{1,1,2} & \flatCF{0,1,1,1,2} & 4 & 5 & -1.7 & \mathbf{T}_1 & 1 & 4 & 2 & 7 \\ 
        K7_{17} & \text{v}0497 & \text{B} & (-29,12) & (-5,3) & \flatCF{2,2,2,2} & \flatCF{1,1,2} & 8 & 4 & -2.4 & \mathbf{T}_2 & 4 & 1 & 2 & 7 \\ 
        K7_{18} & \text{v}0521 & \text{A} & (1,2) & (5,2) & \flatCF{0,2} & \flatCF{2,2} & 2 & 4 & 0.5 & \mathbf{T}_3 & 0 & 2 & 5 & 7 \\ 
        K7_{19} & \text{v}0535 & \text{A} & (3,1) & (1,4) & \flatCF{3} & \flatCF{0,4} & 3 & 4 & 3.0 & \Hat{\mathbf{T}}_4 & 3 & 2 & 2 & 7 \\ 
        K7_{20} & \text{v}0545 & \text{A} & (-6,1) & (-1,5) & \flatCF{6} & \flatCF{0,5} & 6 & 5 & -6.0 & \Hat{\mathbf{T}}_5 & 2 & 3 & 2 & 7 \\ 
        K7_{21} & \text{v}0554 & \text{A} & (-3,4) & (-5,4) & \flatCF{0,1,3} & \flatCF{1,4} & 4 & 5 & -0.8 & \mathbf{T}_1 & 3 & 2 & 2 & 7 \\ 
        K7_{22} & \text{v}0570 & \text{A} & (-2,3) & (-11,7) & \flatCF{0,1,2} & \flatCF{1,1,1,3} & 3 & 6 & -0.7 & \mathbf{T}_1 & 2 & 3 & 2 & 7 \\ 
        K7_{23} & \text{v}0573 & \text{A} & (-2,3) & (-10,7) & \flatCF{0,1,2} & \flatCF{1,2,3} & 3 & 6 & -0.7 & \mathbf{T}_1 & 2 & 3 & 2 & 7 \\ 
        K7_{24} & \text{v}0595 & \text{A} & (1,2) & (5,3) & \flatCF{0,2} & \flatCF{1,1,2} & 2 & 4 & 0.5 & \mathbf{T}_3 & 0 & 2 & 5 & 7 \\ 
        K7_{25} & \text{v}0600 & \text{A} & (4,1) & (1,3) & \flatCF{4} & \flatCF{0,3} & 4 & 3 & 4.0 & \Hat{\mathbf{T}}_4 & 4 & 1 & 2 & 7 \\ 
        K7_{26} & \text{v}0656 & \text{A} & (3,1) & (2,5) & \flatCF{3} & \flatCF{0,3,2} & 3 & 4 & 3.0 & \Hat{\mathbf{T}}_4 & 3 & 2 & 2 & 7 \\ 
        K7_{27} & \text{v}0707 & \text{A} & (-13,4) & (-1,3) & \flatCF{3,4} & \flatCF{0,3} & 7 & 3 & -3.3 & \mathbf{T}_2 & 3 & 2 & 2 & 7 \\ 
        K7_{28} & \text{v}0709 & \text{A} & (-9,4) & (-9,5) & \flatCF{2,4} & \flatCF{1,1,4} & 6 & 6 & -2.3 & \mathbf{T}_2 & 2 & 3 & 2 & 7 \\ 
        K7_{29} & \text{v}0715 & \text{C} & (-7,2) & (-18,7) & \flatCF{3,2} & \flatCF{2,1,1,3} & 5 & 7 & -3.5 & \mathbf{T}_2 & 2 & 3 & 2 & 7 \\ 
        K7_{30} & \text{v}0740 & \text{A} & (-14,5) & (-1,3) & \flatCF{2,1,4} & \flatCF{0,3} & 7 & 3 & -2.8 & \mathbf{T}_2 & 3 & 2 & 2 & 7 \\ 
        K7_{31} & \text{v}0741 & \text{C} & (-7,2) & (-21,8) & \flatCF{3,2} & \flatCF{2,1,1,1,2} & 5 & 7 & -3.5 & \mathbf{T}_2 & 2 & 3 & 2 & 7 \\ 
        K7_{32} & \text{v}0759 & \text{A} & (-3,5) & (-7,4) & \flatCF{0,1,1,2} & \flatCF{1,1,3} & 4 & 5 & -0.6 & \mathbf{T}_1 & 3 & 2 & 2 & 7 \\ 
        K7_{33} & \text{v}0765 & \text{A} & (-3,4) & (-7,5) & \flatCF{0,1,3} & \flatCF{1,2,2} & 4 & 5 & -0.8 & \mathbf{T}_1 & 3 & 2 & 2 & 7 \\ 
        K7_{34} & \text{v}0830 & \text{A} & (-9,2) & (-1,4) & \flatCF{4,2} & \flatCF{0,4} & 6 & 4 & -4.5 & \mathbf{T}_2 & 2 & 3 & 2 & 7 \\ 
        K7_{35} & \text{v}0847 & \text{B} & (-16,7) & (-7,4) & \flatCF{2,3,2} & \flatCF{1,1,3} & 7 & 5 & -2.3 & \mathbf{T}_2 & 3 & 2 & 2 & 7 \\ 
        K7_{36} & \text{v}0912 & \text{C} & (-10,3) & (-11,4) & \flatCF{3,3} & \flatCF{2,1,3} & 6 & 6 & -3.3 & \mathbf{T}_2 & 3 & 2 & 2 & 7 \\ 
        K7_{37} & \text{v}0939 & \text{A} & (-11,3) & (-1,4) & \flatCF{3,1,2} & \flatCF{0,4} & 6 & 4 & -3.7 & \mathbf{T}_2 & 2 & 3 & 2 & 7 \\ 
        K7_{38} & \text{v}0945 & \text{A} & (-3,5) & (-8,5) & \flatCF{0,1,1,2} & \flatCF{1,1,1,2} & 4 & 5 & -0.6 & \mathbf{T}_1 & 3 & 2 & 2 & 7 \\ 
        K7_{42} & \text{v}1077 & \text{A} & (-12,5) & (-2,5) & \flatCF{2,2,2} & \flatCF{0,2,2} & 6 & 4 & -2.4 & \mathbf{T}_2 & 2 & 3 & 2 & 7 \\ 
        K7_{43} & \text{v}1109 & \text{A} & (-13,5) & (-2,5) & \flatCF{2,1,1,2} & \flatCF{0,2,2} & 6 & 4 & -2.6 & \mathbf{T}_2 & 2 & 3 & 2 & 7 \\ \midrule
        K8_{1} & \text{t}00017 & \text{A} & (1,1) & (7,6) & \flatCF{1} & \flatCF{1,6} & 1 & 7 & 1.0 & \Hat{\mathbf{T}}_4 & 1 & 5 & 2 & 8 \\ 
        K8_{2} & \text{t}00027 & \text{A} & (1,1) & (6,7) & \flatCF{1} & \flatCF{0,1,6} & 1 & 7 & 1.0 & \Hat{\mathbf{T}}_4 & 1 & 5 & 2 & 8 \\ 
        K8_{3} & \text{t}00110 & \text{A} & (-4,1) & (-4,17) & \flatCF{4} & \flatCF{0,4,4} & 4 & 8 & -4.0 & \Hat{\mathbf{T}}_5 & 0 & 6 & 2 & 8 \\ 
        K8_{4} & \text{t}00146 & \text{A} & (-4,1) & (-5,19) & \flatCF{4} & \flatCF{0,3,1,4} & 4 & 8 & -4.0 & \Hat{\mathbf{T}}_5 & 0 & 6 & 2 & 8 \\ 
        K8_{5} & \text{t}00324 & \text{A} & (-1,2) & (-13,7) & \flatCF{0,2} & \flatCF{1,1,6} & 2 & 8 & -0.5 & \mathbf{T}_1 & 1 & 5 & 2 & 8 \\ 
        K8_{6} & \text{t}00423 & \text{A} & (-5,2) & (-7,17) & \flatCF{2,2} & \flatCF{0,2,2,3} & 4 & 7 & -2.5 & \mathbf{T}_2 & 0 & 6 & 2 & 8 \\ 
        K8_{7} & \text{t}00434 & \text{A} & (-5,2) & (-7,18) & \flatCF{2,2} & \flatCF{0,2,1,1,3} & 4 & 7 & -2.5 & \mathbf{T}_2 & 0 & 6 & 2 & 8 \\ 
        \end{array}$
\end{adjustbox}

\begin{adjustbox}{center,width=\linewidth}
\renewcommand{\arraycolsep}{3pt}
    $\begin{array}{ccc|cc||cc|cc|rc|ccc|c} 
    \text{Knot} & \text{SnapPea} & \text{Type} & (r,s) & (t,u) & \text{CF}\Bigl\vert\frac{r}{s}\Bigr\vert & \text{CF}\Bigl\vert\frac{t}{u}\Bigr\vert & \Bigl\Vert\frac{r}{s}\Bigr\Vert & \Bigl\Vert\frac{t}{u}\Bigr\Vert & \sim\sfrac{r}{s} & \text{Tri.} & t_0 & t_1 & t_2 & \Sigma \\[1.5mm] \hline\hline
        K8_{8} & \text{t}00550 & \text{A} & (-5,1) & (-2,11) & \flatCF{5} & \flatCF{0,5,2} & 5 & 7 & -5.0 & \Hat{\mathbf{T}}_5 & 1 & 5 & 2 & 8 \\ 
        K8_{9} & \text{t}00565 & \text{A} & (2,1) & (4,9) & \flatCF{2} & \flatCF{0,2,4} & 2 & 6 & 2.0 & \Hat{\mathbf{T}}_4 & 2 & 4 & 2 & 8 \\ 
        K8_{10} & \text{t}00577 & \text{A} & (2,1) & (5,9) & \flatCF{2} & \flatCF{0,1,1,4} & 2 & 6 & 2.0 & \Hat{\mathbf{T}}_4 & 2 & 4 & 2 & 8 \\ 
        K8_{11} & \text{t}00621 & \text{A} & (-5,1) & (-3,14) & \flatCF{5} & \flatCF{0,4,1,2} & 5 & 7 & -5.0 & \Hat{\mathbf{T}}_5 & 1 & 5 & 2 & 8 \\ 
        K8_{12} & \text{t}00729 & \text{A} & (-4,3) & (-7,9) & \flatCF{1,3} & \flatCF{0,1,3,2} & 4 & 6 & -1.3 & \mathbf{T}_1 & 1 & 5 & 2 & 8 \\ 
        K8_{13} & \text{t}00787 & \text{A} & (-4,3) & (-8,11) & \flatCF{1,3} & \flatCF{0,1,2,1,2} & 4 & 6 & -1.3 & \mathbf{T}_1 & 1 & 5 & 2 & 8 \\ 
        K8_{14} & \text{t}00826 & \text{A} & (-5,3) & (-7,12) & \flatCF{1,1,2} & \flatCF{0,1,1,2,2} & 4 & 6 & -1.7 & \mathbf{T}_1 & 1 & 5 & 2 & 8 \\ 
        K8_{15} & \text{t}00855 & \text{A} & (-5,3) & (-8,13) & \flatCF{1,1,2} & \flatCF{0,1,1,1,1,2} & 4 & 6 & -1.7 & \mathbf{T}_1 & 1 & 5 & 2 & 8 \\ 
        K8_{16} & \text{t}00873 & \text{B} & (-39,16) & (-5,3) & \flatCF{2,2,3,2} & \flatCF{1,1,2} & 9 & 4 & -2.4 & \mathbf{T}_2 & 5 & 1 & 2 & 8 \\ 
        K8_{17} & \text{t}00932 & \text{B} & (-46,19) & (-5,3) & \flatCF{2,2,2,1,2} & \flatCF{1,1,2} & 9 & 4 & -2.4 & \mathbf{T}_2 & 5 & 1 & 2 & 8 \\ 
        K8_{18} & \text{t}01033 & \text{A} & (-2,3) & (-14,9) & \flatCF{0,1,2} & \flatCF{1,1,1,4} & 3 & 7 & -0.7 & \mathbf{T}_1 & 2 & 4 & 2 & 8 \\ 
        K8_{19} & \text{t}01037 & \text{A} & (-2,3) & (-13,9) & \flatCF{0,1,2} & \flatCF{1,2,4} & 3 & 7 & -0.7 & \mathbf{T}_1 & 2 & 4 & 2 & 8 \\ 
        K8_{20} & \text{t}01039 & \text{A} & (1,2) & (7,3) & \flatCF{0,2} & \flatCF{2,3} & 2 & 5 & 0.5 & \mathbf{T}_3 & 0 & 3 & 5 & 8 \\ 
        K8_{21} & \text{t}01125 & \text{A} & (-7,3) & (-4,9) & \flatCF{2,3} & \flatCF{0,2,4} & 5 & 6 & -2.3 & \mathbf{T}_2 & 1 & 5 & 2 & 8 \\ 
        K8_{22} & \text{t}01142 & \text{A} & (1,2) & (7,4) & \flatCF{0,2} & \flatCF{1,1,3} & 2 & 5 & 0.5 & \mathbf{T}_3 & 0 & 3 & 5 & 8 \\ 
        K8_{23} & \text{t}01216 & \text{B} & (-7,3) & (-26,15) & \flatCF{2,3} & \flatCF{1,1,2,1,3} & 5 & 8 & -2.3 & \mathbf{T}_2 & 1 & 5 & 2 & 8 \\ 
        K8_{24} & \text{t}01235 & \text{A} & (3,1) & (2,7) & \flatCF{3} & \flatCF{0,3,2} & 3 & 5 & 3.0 & \Hat{\mathbf{T}}_4 & 3 & 3 & 2 & 8 \\ 
        K8_{25} & \text{t}01268 & \text{A} & (-7,3) & (-5,12) & \flatCF{2,3} & \flatCF{0,2,2,2} & 5 & 6 & -2.3 & \mathbf{T}_2 & 1 & 5 & 2 & 8 \\ 
        K8_{26} & \text{t}01292 & \text{B} & (-7,3) & (-31,18) & \flatCF{2,3} & \flatCF{1,1,2,1,1,2} & 5 & 8 & -2.3 & \mathbf{T}_2 & 1 & 5 & 2 & 8 \\ 
        K8_{27} & \text{t}01318 & \text{A} & (-7,2) & (-3,10) & \flatCF{3,2} & \flatCF{0,3,3} & 5 & 6 & -3.5 & \mathbf{T}_2 & 1 & 5 & 2 & 8 \\ 
        K8_{28} & \text{t}01342 & \text{A} & (3,1) & (3,8) & \flatCF{3} & \flatCF{0,2,1,2} & 3 & 5 & 3.0 & \Hat{\mathbf{T}}_4 & 3 & 3 & 2 & 8 \\ 
        K8_{29} & \text{t}01368 & \text{A} & (-7,2) & (-3,11) & \flatCF{3,2} & \flatCF{0,3,1,2} & 5 & 6 & -3.5 & \mathbf{T}_2 & 1 & 5 & 2 & 8 \\ 
        K8_{30} & \text{t}01409 & \text{C} & (-7,2) & (-31,12) & \flatCF{3,2} & \flatCF{2,1,1,2,2} & 5 & 8 & -3.5 & \mathbf{T}_2 & 2 & 4 & 2 & 8 \\ 
        K8_{31} & \text{t}01422 & \text{A} & (-16,5) & (-1,3) & \flatCF{3,5} & \flatCF{0,3} & 8 & 3 & -3.2 & \mathbf{T}_2 & 4 & 2 & 2 & 8 \\ 
        K8_{32} & \text{t}01424 & \text{C} & (-7,2) & (-34,13) & \flatCF{3,2} & \flatCF{2,1,1,1,1,2} & 5 & 8 & -3.5 & \mathbf{T}_2 & 2 & 4 & 2 & 8 \\ 
        K8_{33} & \text{t}01440 & \text{A} & (-17,6) & (-1,3) & \flatCF{2,1,5} & \flatCF{0,3} & 8 & 3 & -2.8 & \mathbf{T}_2 & 4 & 2 & 2 & 8 \\ 
        K8_{34} & \text{t}01598 & \text{C} & (-8,3) & (-24,7) & \flatCF{2,1,2} & \flatCF{3,2,3} & 5 & 8 & -2.7 & \mathbf{T}_2 & 2 & 4 & 2 & 8 \\ 
        K8_{35} & \text{t}01636 & \text{A} & (-8,3) & (-4,11) & \flatCF{2,1,2} & \flatCF{0,2,1,3} & 5 & 6 & -2.7 & \mathbf{T}_2 & 1 & 5 & 2 & 8 \\ 
        K8_{36} & \text{t}01646 & \text{C} & (-8,3) & (-27,8) & \flatCF{2,1,2} & \flatCF{3,2,1,2} & 5 & 8 & -2.7 & \mathbf{T}_2 & 2 & 4 & 2 & 8 \\ 
        K8_{37} & \text{t}01690 & \text{A} & (-8,3) & (-5,13) & \flatCF{2,1,2} & \flatCF{0,2,1,1,2} & 5 & 6 & -2.7 & \mathbf{T}_2 & 1 & 5 & 2 & 8 \\ 
        K8_{38} & \text{t}01757 & \text{A} & (-3,4) & (-9,7) & \flatCF{0,1,3} & \flatCF{1,3,2} & 4 & 6 & -0.8 & \mathbf{T}_1 & 3 & 3 & 2 & 8 \\ 
        K8_{39} & \text{t}01779 & \text{A} & (1,3) & (5,2) & \flatCF{0,3} & \flatCF{2,2} & 3 & 4 & 0.3 & \mathbf{T}_3 & 1 & 2 & 5 & 8 \\ 
        K8_{40} & \text{t}01815 & \text{A} & (-9,2) & (-1,5) & \flatCF{4,2} & \flatCF{0,5} & 6 & 5 & -4.5 & \mathbf{T}_2 & 2 & 4 & 2 & 8 \\ 
        K8_{41} & \text{t}01834 & \text{A} & (-3,4) & (-11,8) & \flatCF{0,1,3} & \flatCF{1,2,1,2} & 4 & 6 & -0.8 & \mathbf{T}_1 & 3 & 3 & 2 & 8 \\ 
        K8_{42} & \text{t}01850 & \text{A} & (-9,4) & (-16,9) & \flatCF{2,4} & \flatCF{1,1,3,2} & 6 & 7 & -2.3 & \mathbf{T}_2 & 2 & 4 & 2 & 8 \\ 
        K8_{43} & \text{t}01863 & \text{A} & (-9,4) & (-3,7) & \flatCF{2,4} & \flatCF{0,2,3} & 6 & 5 & -2.3 & \mathbf{T}_2 & 2 & 4 & 2 & 8 \\ 
        K8_{44} & \text{t}01901 & \text{A} & (3,2) & (3,4) & \flatCF{1,2} & \flatCF{0,3,3} & 3 & 4 & 1.5 & \mathbf{T}_3 & 1 & 2 & 5 & 8 \\ 
        K8_{45} & \text{t}01949 & \text{C} & (-13,4) & (-11,4) & \flatCF{3,4} & \flatCF{2,1,3} & 7 & 6 & -3.3 & \mathbf{T}_2 & 4 & 2 & 2 & 8 \\ 
        K8_{46} & \text{t}01966 & \text{A} & (-13,3) & (-1,4) & \flatCF{4,3} & \flatCF{0,4} & 7 & 4 & -4.3 & \mathbf{T}_2 & 3 & 3 & 2 & 8 \\ 
        K8_{47} & \text{t}02019 & \text{A} & (3,2) & (3,5) & \flatCF{1,2} & \flatCF{0,1,1,2} & 3 & 4 & 1.5 & \mathbf{T}_3 & 1 & 2 & 5 & 8 \\ 
        K8_{48} & \text{t}02069 & \text{A} & (2,3) & (4,3) & \flatCF{0,1,2} & \flatCF{1,3} & 3 & 4 & 0.7 & \mathbf{T}_3 & 1 & 2 & 5 & 8 \\ 
        K8_{49} & \text{t}02099 & \text{A} & (-15,4) & (-1,4) & \flatCF{3,1,3} & \flatCF{0,4} & 7 & 4 & -3.8 & \mathbf{T}_2 & 3 & 3 & 2 & 8 \\ 
        K8_{50} & \text{t}02104 & \text{A} & (-10,3) & (-2,7) & \flatCF{3,3} & \flatCF{0,3,2} & 6 & 5 & -3.3 & \mathbf{T}_2 & 2 & 4 & 2 & 8 \\ 
        K8_{51} & \text{t}02188 & \text{A} & (2,3) & (5,3) & \flatCF{0,1,2} & \flatCF{1,2,2} & 3 & 4 & 0.7 & \mathbf{T}_3 & 1 & 2 & 5 & 8 \\ 
        K8_{52} & \text{t}02238 & \text{A} & (-3,5) & (-12,7) & \flatCF{0,1,1,2} & \flatCF{1,1,2,2} & 4 & 6 & -0.6 & \mathbf{T}_1 & 3 & 3 & 2 & 8 \\ 
        K8_{55} & \text{t}02378 & \text{A} & (-11,4) & (-3,8) & \flatCF{2,1,3} & \flatCF{0,2,1,2} & 6 & 5 & -2.8 & \mathbf{T}_2 & 2 & 4 & 2 & 8 \\ 
        K8_{56} & \text{t}02398 & \text{A} & (-3,5) & (-13,8) & \flatCF{0,1,1,2} & \flatCF{1,1,1,1,2} & 4 & 6 & -0.6 & \mathbf{T}_1 & 3 & 3 & 2 & 8 \\ 
        \end{array}$
\end{adjustbox}

\begin{adjustbox}{center,width=\linewidth}
\renewcommand{\arraycolsep}{3pt}
    $\begin{array}{ccc|cc||cc|cc|rc|ccc|c} 
    \text{Knot} & \text{SnapPea} & \text{Type} & (r,s) & (t,u) & \text{CF}\Bigl\vert\frac{r}{s}\Bigr\vert & \text{CF}\Bigl\vert\frac{t}{u}\Bigr\vert & \Bigl\Vert\frac{r}{s}\Bigr\Vert & \Bigl\Vert\frac{t}{u}\Bigr\Vert & \sim\sfrac{r}{s} & \text{Tri.} & t_0 & t_1 & t_2 & \Sigma \\[1.5mm] \hline\hline
        K8_{57} & \text{t}02404 & \text{A} & (-11,3) & (-2,7) & \flatCF{3,1,2} & \flatCF{0,3,2} & 6 & 5 & -3.7 & \mathbf{T}_2 & 2 & 4 & 2 & 8 \\ 
        K8_{58} & \text{t}02470 & \text{A} & (-12,5) & (-3,7) & \flatCF{2,2,2} & \flatCF{0,2,3} & 6 & 5 & -2.4 & \mathbf{T}_2 & 2 & 4 & 2 & 8 \\ 
        K8_{59} & \text{t}02537 & \text{A} & (-17,7) & (-2,5) & \flatCF{2,2,3} & \flatCF{0,2,2} & 7 & 4 & -2.4 & \mathbf{T}_2 & 3 & 3 & 2 & 8 \\ 
        K8_{60} & \text{t}02567 & \text{A} & (-18,7) & (-2,5) & \flatCF{2,1,1,3} & \flatCF{0,2,2} & 7 & 4 & -2.6 & \mathbf{T}_2 & 3 & 3 & 2 & 8 \\ 
        K8_{63} & \text{t}02639 & \text{A} & (-13,5) & (-3,8) & \flatCF{2,1,1,2} & \flatCF{0,2,1,2} & 6 & 5 & -2.6 & \mathbf{T}_2 & 2 & 4 & 2 & 8 \\ \midrule
        K9_{1} & \text{o}9\_00017 & \text{A} & (1,1) & (8,7) & \flatCF{1} & \flatCF{1,7} & 1 & 8 & 1.0 & \Hat{\mathbf{T}}_4 & 1 & 6 & 2 & 9 \\ 
        K9_{2} & \text{o}9\_00022 & \text{A} & (1,1) & (7,8) & \flatCF{1} & \flatCF{0,1,7} & 1 & 8 & 1.0 & \Hat{\mathbf{T}}_4 & 1 & 6 & 2 & 9 \\ 
        K9_{3} & \text{o}9\_00133 & \text{A} & (-4,1) & (-5,21) & \flatCF{4} & \flatCF{0,4,5} & 4 & 9 & -4.0 & \Hat{\mathbf{T}}_5 & 0 & 7 & 2 & 9 \\ 
        K9_{4} & \text{o}9\_00168 & \text{A} & (-4,1) & (-6,23) & \flatCF{4} & \flatCF{0,3,1,5} & 4 & 9 & -4.0 & \Hat{\mathbf{T}}_5 & 0 & 7 & 2 & 9 \\ 
        K9_{5} & \text{o}9\_00644 & \text{A} & (-1,2) & (-15,8) & \flatCF{0,2} & \flatCF{1,1,7} & 2 & 9 & -0.5 & \mathbf{T}_1 & 1 & 6 & 2 & 9 \\ 
        K9_{6} & \text{o}9\_00797 & \text{A} & (-5,2) & (-9,22) & \flatCF{2,2} & \flatCF{0,2,2,4} & 4 & 8 & -2.5 & \mathbf{T}_2 & 0 & 7 & 2 & 9 \\ 
        K9_{7} & \text{o}9\_00815 & \text{A} & (-5,2) & (-9,23) & \flatCF{2,2} & \flatCF{0,2,1,1,4} & 4 & 8 & -2.5 & \mathbf{T}_2 & 0 & 7 & 2 & 9 \\ 
        K9_{8} & \text{o}9\_01024 & \text{A} & (2,1) & (5,11) & \flatCF{2} & \flatCF{0,2,5} & 2 & 7 & 2.0 & \Hat{\mathbf{T}}_4 & 2 & 5 & 2 & 9 \\ 
        K9_{9} & \text{o}9\_01035 & \text{A} & (2,1) & (6,11) & \flatCF{2} & \flatCF{0,1,1,5} & 2 & 7 & 2.0 & \Hat{\mathbf{T}}_4 & 2 & 5 & 2 & 9 \\ 
        K9_{10} & \text{o}9\_01079 & \text{A} & (-5,1) & (-3,16) & \flatCF{5} & \flatCF{0,5,3} & 5 & 8 & -5.0 & \Hat{\mathbf{T}}_5 & 1 & 6 & 2 & 9 \\ 
        K9_{11} & \text{o}9\_01175 & \text{A} & (-5,1) & (-4,19) & \flatCF{5} & \flatCF{0,4,1,3} & 5 & 8 & -5.0 & \Hat{\mathbf{T}}_5 & 1 & 6 & 2 & 9 \\ 
        K9_{12} & \text{o}9\_01436 & \text{A} & (-4,3) & (-10,13) & \flatCF{1,3} & \flatCF{0,1,3,3} & 4 & 7 & -1.3 & \mathbf{T}_1 & 1 & 6 & 2 & 9 \\ 
        K9_{13} & \text{o}9\_01496 & \text{A} & (-4,3) & (-11,15) & \flatCF{1,3} & \flatCF{0,1,2,1,3} & 4 & 7 & -1.3 & \mathbf{T}_1 & 1 & 6 & 2 & 9 \\ 
        K9_{14} & \text{o}9\_01584 & \text{A} & (-5,3) & (-10,17) & \flatCF{1,1,2} & \flatCF{0,1,1,2,3} & 4 & 7 & -1.7 & \mathbf{T}_1 & 1 & 6 & 2 & 9 \\ 
        K9_{15} & \text{o}9\_01621 & \text{A} & (-5,3) & (-11,18) & \flatCF{1,1,2} & \flatCF{0,1,1,1,1,3} & 4 & 7 & -1.7 & \mathbf{T}_1 & 1 & 6 & 2 & 9 \\ 
        K9_{16} & \text{o}9\_01680 & \text{B} & (-56,23) & (-5,3) & \flatCF{2,2,3,3} & \flatCF{1,1,2} & 10 & 4 & -2.4 & \mathbf{T}_2 & 6 & 1 & 2 & 9 \\ 
        K9_{17} & \text{o}9\_01765 & \text{B} & (-63,26) & (-5,3) & \flatCF{2,2,2,1,3} & \flatCF{1,1,2} & 10 & 4 & -2.4 & \mathbf{T}_2 & 6 & 1 & 2 & 9 \\ 
        K9_{18} & \text{o}9\_01936 & \text{A} & (-6,1) & (-1,7) & \flatCF{6} & \flatCF{0,7} & 6 & 7 & -6.0 & \Hat{\mathbf{T}}_5 & 2 & 5 & 2 & 9 \\ 
        K9_{19} & \text{o}9\_01953 & \text{A} & (-2,3) & (-17,11) & \flatCF{0,1,2} & \flatCF{1,1,1,5} & 3 & 8 & -0.7 & \mathbf{T}_1 & 2 & 5 & 2 & 9 \\ 
        K9_{20} & \text{o}9\_01955 & \text{A} & (-2,3) & (-16,11) & \flatCF{0,1,2} & \flatCF{1,2,5} & 3 & 8 & -0.7 & \mathbf{T}_1 & 2 & 5 & 2 & 9 \\ 
        K9_{21} & \text{o}9\_02030 & \text{A} & (1,2) & (9,4) & \flatCF{0,2} & \flatCF{2,4} & 2 & 6 & 0.5 & \mathbf{T}_3 & 0 & 4 & 5 & 9 \\ 
        K9_{22} & \text{o}9\_02163 & \text{A} & (1,2) & (9,5) & \flatCF{0,2} & \flatCF{1,1,4} & 2 & 6 & 0.5 & \mathbf{T}_3 & 0 & 4 & 5 & 9 \\ 
        K9_{23} & \text{o}9\_02255 & \text{A} & (-7,3) & (-7,16) & \flatCF{2,3} & \flatCF{0,2,3,2} & 5 & 7 & -2.3 & \mathbf{T}_2 & 1 & 6 & 2 & 9 \\ 
        K9_{24} & \text{o}9\_02340 & \text{B} & (-7,3) & (-45,26) & \flatCF{2,3} & \flatCF{1,1,2,1,2,2} & 5 & 9 & -2.3 & \mathbf{T}_2 & 1 & 6 & 2 & 9 \\ 
        K9_{25} & \text{o}9\_02350 & \text{A} & (-7,3) & (-8,19) & \flatCF{2,3} & \flatCF{0,2,2,1,2} & 5 & 7 & -2.3 & \mathbf{T}_2 & 1 & 6 & 2 & 9 \\ 
        K9_{26} & \text{o}9\_02383 & \text{A} & (-6,1) & (-2,11) & \flatCF{6} & \flatCF{0,5,2} & 6 & 7 & -6.0 & \Hat{\mathbf{T}}_5 & 2 & 5 & 2 & 9 \\ 
        K9_{27} & \text{o}9\_02386 & \text{B} & (-7,3) & (-50,29) & \flatCF{2,3} & \flatCF{1,1,2,1,1,1,2} & 5 & 9 & -2.3 & \mathbf{T}_2 & 1 & 6 & 2 & 9 \\ 
        K9_{28} & \text{o}9\_02471 & \text{A} & (3,1) & (3,10) & \flatCF{3} & \flatCF{0,3,3} & 3 & 6 & 3.0 & \Hat{\mathbf{T}}_4 & 3 & 4 & 2 & 9 \\ 
        K9_{29} & \text{o}9\_02559 & \text{A} & (3,1) & (4,11) & \flatCF{3} & \flatCF{0,2,1,3} & 3 & 6 & 3.0 & \Hat{\mathbf{T}}_4 & 3 & 4 & 2 & 9 \\ 
        K9_{30} & \text{o}9\_02655 & \text{A} & (-7,2) & (-5,17) & \flatCF{3,2} & \flatCF{0,3,2,2} & 5 & 7 & -3.5 & \mathbf{T}_2 & 1 & 6 & 2 & 9 \\ 
        K9_{31} & \text{o}9\_02696 & \text{A} & (-7,2) & (-5,18) & \flatCF{3,2} & \flatCF{0,3,1,1,2} & 5 & 7 & -3.5 & \mathbf{T}_2 & 1 & 6 & 2 & 9 \\ 
        K9_{32} & \text{o}9\_02706 & \text{C} & (-7,2) & (-44,17) & \flatCF{3,2} & \flatCF{2,1,1,2,3} & 5 & 9 & -3.5 & \mathbf{T}_2 & 2 & 5 & 2 & 9 \\ 
        K9_{33} & \text{o}9\_02735 & \text{C} & (-7,2) & (-47,18) & \flatCF{3,2} & \flatCF{2,1,1,1,1,3} & 5 & 9 & -3.5 & \mathbf{T}_2 & 2 & 5 & 2 & 9 \\ 
        K9_{34} & \text{o}9\_02772 & \text{A} & (-5,4) & (-5,6) & \flatCF{1,4} & \flatCF{0,1,5} & 5 & 6 & -1.3 & \mathbf{T}_1 & 2 & 5 & 2 & 9 \\ 
        K9_{35} & \text{o}9\_02786 & \text{A} & (-19,6) & (-1,3) & \flatCF{3,6} & \flatCF{0,3} & 9 & 3 & -3.2 & \mathbf{T}_2 & 5 & 2 & 2 & 9 \\ 
        K9_{36} & \text{o}9\_02794 & \text{A} & (-20,7) & (-1,3) & \flatCF{2,1,6} & \flatCF{0,3} & 9 & 3 & -2.9 & \mathbf{T}_2 & 5 & 2 & 2 & 9 \\ 
        K9_{37} & \text{o}9\_02873 & \text{A} & (4,1) & (1,5) & \flatCF{4} & \flatCF{0,5} & 4 & 5 & 4.0 & \Hat{\mathbf{T}}_4 & 4 & 3 & 2 & 9 \\ 
        K9_{38} & \text{o}9\_02909 & \text{A} & (-7,1) & (-1,6) & \flatCF{7} & \flatCF{0,6} & 7 & 6 & -7.0 & \Hat{\mathbf{T}}_5 & 3 & 4 & 2 & 9 \\ 
        K9_{39} & \text{o}9\_03032 & \text{A} & (-5,4) & (-7,9) & \flatCF{1,4} & \flatCF{0,1,3,2} & 5 & 6 & -1.3 & \mathbf{T}_1 & 2 & 5 & 2 & 9 \\ 
        K9_{40} & \text{o}9\_03108 & \text{A} & (-4,5) & (-6,5) & \flatCF{0,1,4} & \flatCF{1,5} & 5 & 6 & -0.8 & \mathbf{T}_1 & 4 & 3 & 2 & 9 \\ 
        K9_{41} & \text{o}9\_03118 & \text{C} & (-8,3) & (-41,12) & \flatCF{2,1,2} & \flatCF{3,2,2,2} & 5 & 9 & -2.7 & \mathbf{T}_2 & 2 & 5 & 2 & 9 \\ 
        K9_{42} & \text{o}9\_03133 & \text{A} & (-7,4) & (-5,9) & \flatCF{1,1,3} & \flatCF{0,1,1,4} & 5 & 6 & -1.8 & \mathbf{T}_1 & 2 & 5 & 2 & 9 \\ 
        \end{array}$
\end{adjustbox}

\begin{adjustbox}{center,width=\linewidth}
\renewcommand{\arraycolsep}{3pt}
    $\begin{array}{ccc|cc||cc|cc|rc|ccc|c} 
    \text{Knot} & \text{SnapPea} & \text{Type} & (r,s) & (t,u) & \text{CF}\Bigl\vert\frac{r}{s}\Bigr\vert & \text{CF}\Bigl\vert\frac{t}{u}\Bigr\vert & \Bigl\Vert\frac{r}{s}\Bigr\Vert & \Bigl\Vert\frac{t}{u}\Bigr\Vert & \sim\sfrac{r}{s} & \text{Tri.} & t_0 & t_1 & t_2 & \Sigma \\[1.5mm] \hline\hline
        K9_{43} & \text{o}9\_03149 & \text{C} & (-8,3) & (-44,13) & \flatCF{2,1,2} & \flatCF{3,2,1,1,2} & 5 & 9 & -2.7 & \mathbf{T}_2 & 2 & 5 & 2 & 9 \\ 
        K9_{44} & \text{o}9\_03162 & \text{A} & (-8,3) & (-7,19) & \flatCF{2,1,2} & \flatCF{0,2,1,2,2} & 5 & 7 & -2.7 & \mathbf{T}_2 & 1 & 6 & 2 & 9 \\ 
        K9_{45} & \text{o}9\_03188 & \text{A} & (-8,3) & (-8,21) & \flatCF{2,1,2} & \flatCF{0,2,1,1,1,2} & 5 & 7 & -2.7 & \mathbf{T}_2 & 1 & 6 & 2 & 9 \\ 
        K9_{46} & \text{o}9\_03278 & \text{A} & (5,1) & (1,4) & \flatCF{5} & \flatCF{0,4} & 5 & 4 & 5.0 & \Hat{\mathbf{T}}_4 & 5 & 2 & 2 & 9 \\ 
        K9_{47} & \text{o}9\_03288 & \text{B} & (-30,13) & (-7,4) & \flatCF{2,3,4} & \flatCF{1,1,3} & 9 & 5 & -2.3 & \mathbf{T}_2 & 5 & 2 & 2 & 9 \\ 
        K9_{48} & \text{o}9\_03313 & \text{A} & (-7,4) & (-7,12) & \flatCF{1,1,3} & \flatCF{0,1,1,2,2} & 5 & 6 & -1.8 & \mathbf{T}_1 & 2 & 5 & 2 & 9 \\ 
        K9_{49} & \text{o}9\_03412 & \text{B} & (-39,17) & (-7,4) & \flatCF{2,3,2,2} & \flatCF{1,1,3} & 9 & 5 & -2.3 & \mathbf{T}_2 & 5 & 2 & 2 & 9 \\ 
        K9_{50} & \text{o}9\_03420 & \text{A} & (4,1) & (2,7) & \flatCF{4} & \flatCF{0,4,2} & 4 & 5 & 4.0 & \Hat{\mathbf{T}}_4 & 4 & 3 & 2 & 9 \\ 
        K9_{51} & \text{o}9\_03526 & \text{A} & (-11,5) & (-11,6) & \flatCF{2,5} & \flatCF{1,1,5} & 7 & 7 & -2.2 & \mathbf{T}_2 & 3 & 4 & 2 & 9 \\ 
        K9_{52} & \text{o}9\_03586 & \text{A} & (-3,4) & (-13,10) & \flatCF{0,1,3} & \flatCF{1,3,3} & 4 & 7 & -0.8 & \mathbf{T}_1 & 3 & 4 & 2 & 9 \\ 
        K9_{53} & \text{o}9\_03597 & \text{A} & (1,3) & (7,2) & \flatCF{0,3} & \flatCF{3,2} & 3 & 5 & 0.3 & \mathbf{T}_3 & 1 & 3 & 5 & 9 \\ 
        K9_{54} & \text{o}9\_03622 & \text{A} & (-3,4) & (-15,11) & \flatCF{0,1,3} & \flatCF{1,2,1,3} & 4 & 7 & -0.8 & \mathbf{T}_1 & 3 & 4 & 2 & 9 \\ 
        K9_{55} & \text{o}9\_03802 & \text{A} & (-4,7) & (-9,5) & \flatCF{0,1,1,3} & \flatCF{1,1,4} & 5 & 6 & -0.6 & \mathbf{T}_1 & 4 & 3 & 2 & 9 \\ 
        K9_{56} & \text{o}9\_03833 & \text{A} & (-4,5) & (-9,7) & \flatCF{0,1,4} & \flatCF{1,3,2} & 5 & 6 & -0.8 & \mathbf{T}_1 & 4 & 3 & 2 & 9 \\ 
        K9_{57} & \text{o}9\_03862 & \text{A} & (1,3) & (8,3) & \flatCF{0,3} & \flatCF{2,1,2} & 3 & 5 & 0.3 & \mathbf{T}_3 & 1 & 3 & 5 & 9 \\ 
        K9_{58} & \text{o}9\_03932 & \text{B} & (-20,9) & (-9,5) & \flatCF{2,4,2} & \flatCF{1,1,4} & 8 & 6 & -2.2 & \mathbf{T}_2 & 4 & 3 & 2 & 9 \\ 
        K9_{59} & \text{o}9\_04037 & \text{A} & (3,2) & (5,7) & \flatCF{1,2} & \flatCF{0,1,2,2} & 3 & 5 & 1.5 & \mathbf{T}_3 & 1 & 3 & 5 & 9 \\ 
        K9_{60} & \text{o}9\_04054 & \text{A} & (-11,2) & (-1,5) & \flatCF{5,2} & \flatCF{0,5} & 7 & 5 & -5.5 & \mathbf{T}_2 & 3 & 4 & 2 & 9 \\ 
        K9_{61} & \text{o}9\_04060 & \text{A} & (-17,4) & (-1,4) & \flatCF{4,4} & \flatCF{0,4} & 8 & 4 & -4.3 & \mathbf{T}_2 & 4 & 3 & 2 & 9 \\ 
        K9_{62} & \text{o}9\_04106 & \text{A} & (-7,5) & (-7,10) & \flatCF{1,2,2} & \flatCF{0,1,2,3} & 5 & 6 & -1.4 & \mathbf{T}_1 & 2 & 5 & 2 & 9 \\ 
        K9_{63} & \text{o}9\_04139 & \text{A} & (3,2) & (5,8) & \flatCF{1,2} & \flatCF{0,1,1,1,2} & 3 & 5 & 1.5 & \mathbf{T}_3 & 1 & 3 & 5 & 9 \\ 
        K9_{64} & \text{o}9\_04205 & \text{A} & (-7,5) & (-8,11) & \flatCF{1,2,2} & \flatCF{0,1,2,1,2} & 5 & 6 & -1.4 & \mathbf{T}_1 & 2 & 5 & 2 & 9 \\ 
        K9_{65} & \text{o}9\_04245 & \text{A} & (-8,5) & (-7,11) & \flatCF{1,1,1,2} & \flatCF{0,1,1,1,3} & 5 & 6 & -1.6 & \mathbf{T}_1 & 2 & 5 & 2 & 9 \\ 
        K9_{66} & \text{o}9\_04269 & \text{A} & (-19,5) & (-1,4) & \flatCF{3,1,4} & \flatCF{0,4} & 8 & 4 & -3.8 & \mathbf{T}_2 & 4 & 3 & 2 & 9 \\ 
        K9_{67} & \text{o}9\_04313 & \text{C} & (-10,3) & (-27,10) & \flatCF{3,3} & \flatCF{2,1,2,3} & 6 & 8 & -3.3 & \mathbf{T}_2 & 3 & 4 & 2 & 9 \\ 
        K9_{68} & \text{o}9\_04431 & \text{C} & (-10,3) & (-30,11) & \flatCF{3,3} & \flatCF{2,1,2,1,2} & 6 & 8 & -3.3 & \mathbf{T}_2 & 3 & 4 & 2 & 9 \\ 
        K9_{69} & \text{o}9\_04435 & \text{C} & (-13,4) & (-14,5) & \flatCF{3,4} & \flatCF{2,1,4} & 7 & 7 & -3.3 & \mathbf{T}_2 & 4 & 3 & 2 & 9 \\ 
        K9_{70} & \text{o}9\_04438 & \text{A} & (-8,5) & (-8,13) & \flatCF{1,1,1,2} & \flatCF{0,1,1,1,1,2} & 5 & 6 & -1.6 & \mathbf{T}_1 & 2 & 5 & 2 & 9 \\ 
        K9_{71} & \text{o}9\_04938 & \text{A} & (-14,3) & (-1,5) & \flatCF{4,1,2} & \flatCF{0,5} & 7 & 5 & -4.7 & \mathbf{T}_2 & 3 & 4 & 2 & 9 \\ 
        K9_{72} & \text{o}9\_04950 & \text{A} & (2,3) & (7,5) & \flatCF{0,1,2} & \flatCF{1,2,2} & 3 & 5 & 0.7 & \mathbf{T}_3 & 1 & 3 & 5 & 9 \\ 
        K9_{73} & \text{o}9\_05021 & \text{A} & (-3,5) & (-17,10) & \flatCF{0,1,1,2} & \flatCF{1,1,2,3} & 4 & 7 & -0.6 & \mathbf{T}_1 & 3 & 4 & 2 & 9 \\ 
        K9_{74} & \text{o}9\_05028 & \text{A} & (2,3) & (8,5) & \flatCF{0,1,2} & \flatCF{1,1,1,2} & 3 & 5 & 0.7 & \mathbf{T}_3 & 1 & 3 & 5 & 9 \\ 
        K9_{75} & \text{o}9\_05177 & \text{A} & (-3,5) & (-18,11) & \flatCF{0,1,1,2} & \flatCF{1,1,1,1,3} & 4 & 7 & -0.6 & \mathbf{T}_1 & 3 & 4 & 2 & 9 \\ 
        K9_{76} & \text{o}9\_05229 & \text{A} & (-5,7) & (-10,7) & \flatCF{0,1,2,2} & \flatCF{1,2,3} & 5 & 6 & -0.7 & \mathbf{T}_1 & 4 & 3 & 2 & 9 \\ 
        K9_{79} & \text{o}9\_05357 & \text{A} & (-4,7) & (-12,7) & \flatCF{0,1,1,3} & \flatCF{1,1,2,2} & 5 & 6 & -0.6 & \mathbf{T}_1 & 4 & 3 & 2 & 9 \\ 
        K9_{80} & \text{o}9\_05426 & \text{A} & (-22,9) & (-2,5) & \flatCF{2,2,4} & \flatCF{0,2,2} & 8 & 4 & -2.4 & \mathbf{T}_2 & 4 & 3 & 2 & 9 \\ 
        K9_{81} & \text{o}9\_05483 & \text{A} & (-23,9) & (-2,5) & \flatCF{2,1,1,4} & \flatCF{0,2,2} & 8 & 4 & -2.6 & \mathbf{T}_2 & 4 & 3 & 2 & 9 \\ 
        K9_{82} & \text{o}9\_05562 & \text{A} & (-5,8) & (-11,7) & \flatCF{0,1,1,1,2} & \flatCF{1,1,1,3} & 5 & 6 & -0.6 & \mathbf{T}_1 & 4 & 3 & 2 & 9 \\ 
        K9_{84} & \text{o}9\_05618 & \text{A} & (-5,7) & (-11,8) & \flatCF{0,1,2,2} & \flatCF{1,2,1,2} & 5 & 6 & -0.7 & \mathbf{T}_1 & 4 & 3 & 2 & 9 \\ 
        K9_{86} & \text{o}9\_05860 & \text{A} & (-16,7) & (-3,7) & \flatCF{2,3,2} & \flatCF{0,2,3} & 7 & 5 & -2.3 & \mathbf{T}_2 & 3 & 4 & 2 & 9 \\ 
        K9_{87} & \text{o}9\_05970 & \text{A} & (-5,8) & (-13,8) & \flatCF{0,1,1,1,2} & \flatCF{1,1,1,1,2} & 5 & 6 & -0.6 & \mathbf{T}_1 & 4 & 3 & 2 & 9 \\ 
        K9_{88} & \text{o}9\_06060 & \text{A} & (-17,5) & (-2,7) & \flatCF{3,2,2} & \flatCF{0,3,2} & 7 & 5 & -3.4 & \mathbf{T}_2 & 3 & 4 & 2 & 9 \\ 
        K9_{89} & \text{o}9\_06128 & \text{A} & (-19,8) & (-3,7) & \flatCF{2,2,1,2} & \flatCF{0,2,3} & 7 & 5 & -2.4 & \mathbf{T}_2 & 3 & 4 & 2 & 9 \\ 
        K9_{90} & \text{o}9\_06154 & \text{A} & (-18,5) & (-2,7) & \flatCF{3,1,1,2} & \flatCF{0,3,2} & 7 & 5 & -3.6 & \mathbf{T}_2 & 3 & 4 & 2 & 9 \\ 
        K9_{91} & \text{o}9\_06248 & \text{A} & (-19,7) & (-3,8) & \flatCF{2,1,2,2} & \flatCF{0,2,1,2} & 7 & 5 & -2.7 & \mathbf{T}_2 & 3 & 4 & 2 & 9 \\ 
        K9_{92} & \text{o}9\_06301 & \text{A} & (-21,8) & (-3,8) & \flatCF{2,1,1,1,2} & \flatCF{0,2,1,2} & 7 & 5 & -2.6 & \mathbf{T}_2 & 3 & 4 & 2 & 9 \\ 
    \end{array}$
\end{adjustbox}
\newpage
\printbibliography
\end{document}